\author{
  \textbf{Chiara Esposito}\thanks{\texttt{chiara.esposito@mathematik.uni-wuerzburg.de}},\\
    Institut für Mathematik \\
  Lehrstuhl für Mathematik X \\
  Universität Würzburg \\
  Campus Hubland Nord \\
  Emil-Fischer-Straße 31 \\
  97074 Würzburg \\
  Germany \\[0.3cm]
  \textbf{Niek de Kleijn}\thanks{\texttt{niekdekleijn@gmail.com }},\\
  Département de mathématiques\\
  Université libre de Bruxelles CP 218\\
  Boulevard du Triomphe\\ 
  1050 Bruxelles\\
  Belgium
    \\[0.3cm]
}
\renewcommand{\mathbb}[1]{\mathbbm{#1}}
\newcommand{\refitem}[1] {\textit{\ref{#1}.)}}
\numberwithin{equation}{section}
\renewcommand{\arraystretch}{1.2}
\let\originalleft\left
\let\originalright\right
\renewcommand{\left}{\mathopen{}\mathclose\bgroup\originalleft}
\renewcommand{\right}{\aftergroup\egroup\originalright}
\newtheorem{lemma}{Lemma}[section]
\newtheorem{proposition}[lemma]{Proposition}
\newtheorem{theorem}[lemma]{Theorem}
\newtheorem{corollary}[lemma]{Corollary}
\newtheorem{definition}[lemma]{Definition}
\newtheorem{example}[lemma]{Example}
\newtheorem{remark}[lemma]{Remark}
\def\theorem@checkbold{}
\theoremstyle{nonumberplain}
\newtheorem{proof}{Proof}
\newenvironment{lemmalist}{\begin{compactenum}[\itshape i.)]}{\end{compactenum}}
\newenvironment{propositionlist}{\begin{compactenum}[\itshape i.)]}{\end{compactenum}}
\newenvironment{definitionlist}{\begin{compactenum}[\itshape i.)]}{\end{compactenum}}
\newcommand{\ring}[1]         {\mathsf{#1}}
\newcommand{\group}[1]        {\mathrm{#1}}
\newcommand{\algebra}[1]      {\mathscr{#1}}
\newcommand{\lie}[1]          {\mathfrak{#1}}
\newcommand{\Fun}[1][k]      {\mathscr{C}^{#1}}
\newcommand{\Cinfty}         {\Fun[\infty]}
\newcommand{\acts}            {\mathbin{\triangleright}}
\newcommand{\Sec}[1][k]      {\Gamma^{#1}}
\newcommand{\Secinfty}       {\Sec[\infty]}
\newcommand{\tensor}[1][{}]           {\mathbin{\otimes_{\scriptscriptstyle{#1}}}}
\newcommand{\Anti}                    {\Lambda}
\newcommand{\Sym}                     {\mathrm{S}}
\newcommand{\argument}       {\,\cdot\,}
\DeclareMathOperator{\id}    {\mathsf{id}}
\newcommand{\pr}             {\mathrm{pr}}
\newcommand{\Lie}   {\mathscr{L}}
\DeclarePairedDelimiter{\Schouten}{\llbracket}{\rrbracket}
\newcommand{\D}              {\mathop{}\!\mathrm{d}}
\newcommand{\End}            {\operatorname{\mathsf{End}}}
\newcommand{\hhbar}{[\![\hbar]\!]}
\newcommand{\Rf}  {\mathbb{R}^d_{\mathrm{formal}}}
\def\presuper#1#2% 
\newcommand{\Dpoly}[2]{\presuper{#1}{\mathrm{D}_{\mathrm{poly}}^{#2}}}
\newcommand{\Tpoly}[2]{\presuper{#1}{\mathrm{T}_{\mathrm{poly}}^{#2}}}
\newcommand{\Cohom}[1]{\mathrm{H}^{#1}}
\newcommand{\FibT}[1]{\mathcal{T}_{\mathrm{poly}}^{#1}}
\newcommand{\FibD}[1]{\mathcal{D}_{\mathrm{poly}}^{#1}}
\newcommand{\FormsT}[3]{\presuper{E}{\Omega^{#1}(#2;\FibT{#3})}}
\newcommand{\FormsD}[3]{\presuper{E}{\Omega^{#1}(#2;\FibD{#3})}}
\newcommand{\conn}{\nabla\!\!\!\!\nabla}
\DeclareMathOperator{\Ker}{Ker}
\newcommand{\R}{\mathbb{R}}
\newcommand{\Z}{\mathbb{Z}}
\newcommand{\Q}{\mathbb{Q}}
\newcommand{\UE}{\algebra U}
\title{Universal Deformation Formula, Formality and Actions}
\author{
  \textbf{Chiara Esposito}\thanks{\texttt{chiara.esposito@mathematik.uni-wuerzburg.de}},\\
    Institut für Mathematik \\
  Lehrstuhl für Mathematik X \\
  Universität Würzburg \\
  Campus Hubland Nord \\
  Emil-Fischer-Straße 31 \\
  97074 Würzburg \\
  Germany \\[0.3cm]
  \textbf{Niek de Kleijn}\thanks{\texttt{niekdekleijn@gmail.com }},\\
  Département de mathématiques\\
  Université libre de Bruxelles CP 218\\
  Boulevard du Triomphe\\ 
  1050 Bruxelles\\
  Belgium
    \\[0.3cm]
}
\begin{document}

\maketitle

\begin{abstract}
  In this paper we provide a quantization via formality of
  Poisson actions of a triangular Lie algebra $(\lie g,
  r)$ on a smooth manifold $M$. Using the formality of 
  polydifferential operators on Lie
  algebroids we obtain a deformation quantization of $M$ together with
  a quantum group $\mathscr{U}_\hbar(\mathfrak{g})$ and a map of
  associated DGLA's. This motivates a definition of quantum action in
  terms of $L_\infty$-morphisms which generalizes the one given by
  Drinfeld.
\end{abstract}

\newpage

\tableofcontents

%
%Intro
%
\section{Introduction}

The concept of deformation quantization has been introduced by Bayen,
Flato, Fronsdal, Lichnerowicz and Sternheimer in their seminal paper
\cite{bayen.et.al:1978a} based on the theory of associative
deformations of algebras \cite{gerstenhaber:1964a}.  A formal star
product on a Poisson manifold $M$ is defined as a formal associative
deformation of the algebra of smooth functions $\Cinfty (M)$
on $M$ (the name comes from the notation $\star$ for the deformed product) and its existence has been proved as a corollary of the
so-called \emph{formality theorem} in \cite{kontsevich:2003a} (for
more details in deformation quantization we refer to the textbooks
\cite{esposito:2015a, waldmann:2007a}). On the other hand, Drinfeld
introduced the notion of quantum groups in the setting of formal
deformations, see e.g. the textbooks \cite{chari.pressley:1994a,
  etingof.schiffmann:1998a} for a detailed discussion.  Drinfeld also
introduced the idea of using symmetries to get formal
deformations. More explicitely, given an action by derivations of a
Lie algebra $\lie{g}$ on an associative algebra $(\algebra{A},
m_{\algebra A})$, the definition of the so-called \emph{Drinfeld
  twist} \cite{drinfeld:1983a, drinfeld:1988a} $J \in
(\algebra{U}({\lie{g}}) \tensor \algebra{U}(\lie g))\hhbar$ allows us
to obtain an associative formal deformation of $\algebra{A}$ by means
of a \emph{universal deformation formula}
\begin{equation}
    \label{eq:TheUDF}
    a \star_{J} b
    =
    m_{\algebra A} (J\acts (a \tensor b))
\end{equation}
for $a, b \in \algebra{A}\hhbar$. Here $\acts$ is the action of
$\lie{g}$ extended to the universal enveloping algebra
$\algebra{U}(\lie g)$ and then to $\algebra{U}(\lie g) \tensor
\algebra{U}(\lie g)$ acting on $\algebra{A} \tensor \algebra{A}$.  The
deformed algebra $(\algebra A\hhbar, \star_J)$ is then a
module-algebra for the quantum group:
\begin{equation}
   \UE_J(\lie{g}) 
   := (\UE(\lie{g})\hhbar, \Delta_J 
   := J \Delta J^{-1}).\end{equation}
In other words, Drinfeld obtains a quantized action. 
We mention here that the relevance of deformations induced via symmetries has been deeply investigated in
\cite{giaquinto.zhang:1998a} and in a non-formal setting in
\cite{bieliavsky.gayral:2015a}. 
%\textcolor{red}{something about formality and actions, literature}

The aim of this paper consists in obtaining a more general notion of
deformation through symmetry, by using formality theory. We focus on
the quantization of Lie algebra actions in the particular case of
triangular Lie algebras. Such actions can be regarded as the
infinitesimal version of Poisson Lie group actions (see e.g.
\cite{Kosmann-Schwarzbach2004,Semenov1985}) and they are very
important in the context of integrable systems. Triangular Lie
algebras and their quantizations have been studied by many authors,
see e.g. \cite{Calaque2006,enriquez.etingof:2005,xu:2002a}.
The idea of applying formality to actions has also been used 
in \cite{Arnal2007}, where
the authors use the Kontsevich formality on a Poisson manifold to construct
for each Poisson vector field a derivation of the star product. We recover this result.
%\cite{Sharygin2016} seems the same...

The formality theorem states the existence of an
$L_\infty$-quasi-isomorphism from polyvectorfields to polydifferential
operators on a manifold $M$. In \cite{Dolgushev2005, Dolgushev2005a}
Dolgushev proves the theorem for general $M$ using the proof for
$M=\R^n$. In order to construct such $L_\infty$-quasi-isomorphisms,
Dolgushev uses Fedosov's methods \cite{fedosov:1994a} concerning
formal geometry, Kontsevich's quasi-isomorphism
\cite{kontsevich:2003a} and the twisting procedure inspired by Quillen
\cite{Quillen69}. Following the construction provided by Dolgushev,
Calaque proved a formality theorem for Lie algebroids
\cite{Calaque2005}.
We consider an infinitesimal action of $\lie g$ on $M$, i.e. a Lie
algebra homomorphism $\varphi\colon
\mathfrak{g}\rightarrow\Secinfty(TM)$. This can immediately be
extended to a DGLA morphism
\begin{equation}
  \Tpoly{\mathfrak{g}}{} \longrightarrow \Tpoly{}{}(M),
\end{equation}
where $\Tpoly{\mathfrak{g}}{} = \wedge^\bullet \lie g$ and
$\Tpoly{}{}(M) = \Secinfty(\wedge^\bullet TM)$  with the brackets extended via a Leibniz rule. From the formality
theorem we know that we have the following $L_\infty$-quasi-isomorphisms
\begin{equation}
  \Tpoly{\mathfrak{g}}{} \longrightarrow \Dpoly{\mathfrak{g}}{}
  \quad \mbox{and} \quad
  \Tpoly{}{}(M) \longrightarrow \Dpoly{}{}(M).
\end{equation}
Using the quasi-invertibility of $L_\infty$-quasi-isomorphisms we obtain
the existence of an $L_\infty$-morphism 
\begin{equation}
\label{eq:totwist}
  \Dpoly{\mathfrak{g}}{} \longrightarrow \Dpoly{}{}(M).
\end{equation}
If the Lie algebra $\lie g$ is endowed with an $r$-matrix, i.e.  an
element $r \in \lie g \wedge \lie g$ satisfying the Maurer--Cartan
equation $\Schouten{r, r} = 0$, the action always induces a Poisson
structure on $M$ and it is automatically a Poisson action.
\begin{lemma}
  \leavevmode
  \begin{lemmalist}
  \item Given the formal Maurer--Cartan element $\hbar r \in
    \Tpoly{\mathfrak{g}}{}\hhbar$, we obtain via formality a
    Maurer--Cartan element $\rho_\hbar$. This yields a quantum group
    $\algebra U_{\rho_\hbar} (\lie g)$ with deformed coproduct $\Delta_{1\otimes 1+\rho_\hbar}$.
  \item Given the  Maurer--Cartan element $\hbar \pi =
    \varphi\wedge \varphi(r) \in \Tpoly{}{}(M)\hhbar$, we obtain via
    formality a Maurer--Cartan element $B_\hbar$. This induces a
    formal deformation $(\Cinfty(M)\hhbar, \star_{B_\hbar})$ of the Poisson
    algebra $(\Cinfty(M), \pi)$.
  \end{lemmalist}
\end{lemma}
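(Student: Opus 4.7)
The plan is to treat both parts by the same three-step scheme: verify that the given polyvector is Maurer--Cartan on the geometric side, transport it through the relevant formality $L_\infty$-quasi-isomorphism, and translate the resulting Maurer--Cartan element on the operator side into the advertised deformed structure.

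For \textit{i.)}, I would first check that $\hbar r$ is Maurer--Cartan in $\Tpoly{\mathfrak{g}}{}\hhbar$. Since the only nontrivial bracket on polyvectors is the Schouten bracket, this reduces to $\Schouten{\hbar r,\hbar r} = \hbar^{2}\Schouten{r,r} = 0$, which holds by the classical $r$-matrix hypothesis. Applying the Lie algebroid formality of Calaque $U^{\lie g}\colon \Tpoly{\mathfrak{g}}{}\to \Dpoly{\mathfrak{g}}{}$, specialised to the Lie algebroid $\lie g\to \{*\}$, via the standard transfer formula
\begin{equation}
\rho_\hbar \;=\; \sum_{n\geq 1}\frac{1}{n!}\,U^{\lie g}_n(\hbar r,\dots,\hbar r),
\end{equation}
automatically yields a Maurer--Cartan element $\rho_\hbar$ of $\Dpoly{\mathfrak{g}}{}\hhbar$ concentrated in degree one. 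Under the natural identification of arity-two cochains with elements of $(\algebra U(\lie g)\tensor\algebra U(\lie g))\hhbar$, the Maurer--Cartan equation translates into the Drinfeld twist cocycle condition, so $J := 1\otimes 1+\rho_\hbar$ is a twist. Its invertibility is automatic from $\rho_\hbar\equiv 0$ modulo $\hbar$, and the coassociativity of $\Delta_J = J\Delta J^{-1}$ then follows formally from the twist equation, yielding the quantum group $\algebra U_{\rho_\hbar}(\lie g)$.

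For \textit{ii.)}, since $\varphi$ extends to a DGLA morphism $\Tpoly{\mathfrak{g}}{}\to \Tpoly{}{}(M)$, I would directly propagate the identity above to
\begin{equation}
\Schouten{\hbar\pi,\hbar\pi} \;=\; \hbar^{2}\Schouten{\varphi\wedge\varphi(r),\varphi\wedge\varphi(r)} \;=\; 0,
\end{equation}
so $\hbar\pi$ is Maurer--Cartan in $\Tpoly{}{}(M)\hhbar$. Transporting it along the Kontsevich--Dolgushev formality $U^{M}\colon \Tpoly{}{}(M)\to \Dpoly{}{}(M)$ by the analogous exponential formula produces a Maurer--Cartan element $B_\hbar$ of degree one. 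Such elements are in bijection with formal star products: setting $\star_{B_\hbar} := m + B_\hbar$, where $m$ denotes the pointwise product, gives the associative deformation, and since the first Taylor coefficient of $U^{M}$ is the Hochschild--Kostant--Rosenberg quasi-isomorphism, the semiclassical limit of $\star_{B_\hbar}$ is precisely $\pi$.

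The main obstacle will be the algebraic translation in part \textit{i.)}: one must verify that the Hochschild-type differential and the Gerstenhaber bracket on $\Dpoly{\mathfrak{g}}{}$ encode exactly the Drinfeld twist cocycle condition under the dictionary between arity-two cochains and elements of $(\algebra U(\lie g)\tensor\algebra U(\lie g))\hhbar$. Once this identification is pinned down, the remaining checks (invertibility of $J$, coassociativity of $\Delta_J$, and recovery of $\pi$ as the semiclassical limit of $\star_{B_\hbar}$) are routine consequences of the $L_\infty$-morphism property and the HKR normalisation of the leading Taylor coefficient.
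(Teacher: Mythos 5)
Your proposal is correct and essentially coincides with the paper's own argument: both check $\Schouten{\hbar r,\hbar r}=0=\Schouten{\hbar\pi,\hbar\pi}$, push these Maurer--Cartan elements through the formality morphisms $F_{\lie g}$ and $F_M$ via the transfer formula $\pi_F=\sum_{n\geq 1}\frac{1}{n!}F_n(\pi^n)$, read off associativity of $m+B_\hbar$ from the Maurer--Cartan equation with semiclassical limit $\pi$ coming from the first Taylor coefficient being a quasi-isomorphism, and identify degree-one Maurer--Cartan elements on the Hopf side with formal Drinfeld twists $J=1\otimes 1+\rho_\hbar$. The one step you defer as the ``main obstacle'' --- the dictionary between the DGLA structure on $\Dpoly{\lie g}{}\cong \UE(\lie g)_{poly}$ and the twist cocycle condition --- is precisely the paper's lemma that $J\in T^2H$ is a formal twist if and only if $J-1\otimes 1$ is Maurer--Cartan in $H_{poly}$, settled there by the short computation $\partial F+\frac{1}{2}[F,F]_H=\frac{1}{2}[J,J]_H=(\Delta\otimes\id)(J)(J\otimes 1)-(\id\otimes\Delta)(J)(1\otimes J)$.
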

The DGLA obtained from twisting the DGLA of $\lie g$-polydifferential
operators on the point $\Dpoly{\lie g}{}$ as given in \cite{Calaque2005} turns
out to be a special case of a DGLA canonically associated to any Hopf algebra
$H$, which we call $H_{poly}$. Given any Maurer-Cartan element $F\in H_{poly}$ 
there is the associated Drinfeld twist $J=1\otimes 1 + F$ (in the formal
sense). It turns out that the twisted DGLA $H_{poly}^F$ is canonically
isomorphic to $(H_J)_{poly}$ where $H_J$ denotes the Hopf algebra twisted by
$J$. Thus, using the twisting procedure on the $L_\infty$-morphism
\eqref{eq:totwist} we prove the following theorem.

\begin{theorem}
  Let $\lie g$ be a Lie algebra endowed with a classical $r$-matrix
  and a Lie algebra action $\varphi\colon
  \mathfrak{g}\rightarrow\Secinfty(TM)$ inducing a Poisson structure
  on $M$ by $\pi := \varphi\wedge\varphi(r)$.  Then, there exists an
  $L_\infty$-morphism $(\UE_F(\lie g)\hhbar)_{poly} \to
  C(\algebra A_\hbar;\algebra A_\hbar)$ between the DGLA associated to the quantum group $\UE_{\rho_\hbar}(\lie g)\hhbar$ and the Hochschild complex of the
  deformation quantization $\algebra A_\hbar$ of $\Cinfty(M)$.
\end{theorem}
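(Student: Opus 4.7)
The plan is to apply the standard twisting procedure for $L_\infty$-morphisms to the morphism \eqref{eq:totwist}, and then identify the resulting twisted DGLAs via the two dictionaries announced just before the theorem: the canonical isomorphism $H_{poly}^F \cong (H_J)_{poly}$ on the Lie-algebra side, and the identification of the twist of $\Dpoly{}{}(M)$ by a Maurer--Cartan element describing a star product with the shifted Hochschild complex of that star product on the manifold side.

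First I would fix the $L_\infty$-morphism
\begin{equation}
    U\colon \Dpoly{\lie g}{} \longrightarrow \Dpoly{}{}(M)
\end{equation}
from \eqref{eq:totwist}, obtained as the composition of a chosen quasi-inverse of the Calaque formality $\Tpoly{\lie g}{} \to \Dpoly{\lie g}{}$, the strict DGLA morphism $\Tpoly{\lie g}{} \to \Tpoly{}{}(M)$ extending $\varphi$ by wedge powers, and the Kontsevich--Dolgushev formality $\Tpoly{}{}(M) \to \Dpoly{}{}(M)$. I would then verify that the Maurer--Cartan elements from the preceding lemma correspond under $U$, i.e.\ that the $L_\infty$-pushforward $U_*(\rho_\hbar)$ equals $B_\hbar$, at worst up to gauge equivalence. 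This is compatible with the construction: reversing the quasi-inverse sends $\rho_\hbar$ back to $\hbar r$, wedging with $\varphi$ sends $\hbar r$ to $\hbar\pi = \hbar\varphi\wedge\varphi(r)$, and the Kontsevich--Dolgushev formality sends $\hbar\pi$ to $B_\hbar$ by definition of $B_\hbar$ in the lemma.

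With this compatibility in hand, twisting $U$ by the Maurer--Cartan pair $(\rho_\hbar, B_\hbar)$ yields an $L_\infty$-morphism
\begin{equation}
    U^{\rho_\hbar}\colon \bigl(\Dpoly{\lie g}{}\hhbar\bigr)^{\rho_\hbar} \longrightarrow \bigl(\Dpoly{}{}(M)\hhbar\bigr)^{B_\hbar}
\end{equation}
between the twisted DGLAs. On the source, the canonical isomorphism $H_{poly}^F \cong (H_J)_{poly}$ specializes to
\begin{equation}
    \bigl(\Dpoly{\lie g}{}\hhbar\bigr)^{\rho_\hbar} \cong \bigl(\UE_{\rho_\hbar}(\lie g)\hhbar\bigr)_{poly};
\end{equation}
on the target, twisting the polydifferential DGLA by a Maurer--Cartan element describing the star product $\star_{B_\hbar}$ produces exactly the shifted Hochschild complex of the deformed algebra, so that with $\algebra A_\hbar = (\Cinfty(M)\hhbar, \star_{B_\hbar})$ one has
\begin{equation}
    \bigl(\Dpoly{}{}(M)\hhbar\bigr)^{B_\hbar} \cong C(\algebra A_\hbar;\algebra A_\hbar).
\end{equation}
Composing $U^{\rho_\hbar}$ with these two identifications yields the required $L_\infty$-morphism.

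The main technical obstacle is verifying that $U_*(\rho_\hbar) = B_\hbar$ strictly. Because $U$ is built out of genuine $L_\infty$-quasi-isomorphisms, the pushforward of a Maurer--Cartan element is not a mere substitution but the full series $\sum_{n\geq 1}\tfrac{1}{n!}U_n(\alpha,\dots,\alpha)$, and one must either pin down a normalization of the quasi-inverse of the Calaque formality for which the equality holds on the nose, or else track an explicit gauge and absorb it into the twist on the target DGLA. Once this compatibility is settled, the remainder of the argument is formal: the twisting of $L_\infty$-morphisms by compatible Maurer--Cartan elements is a well-understood construction, and both the Hopf-algebraic and the Hochschild-complex identifications are canonical isomorphisms of DGLAs already discussed in the excerpt.
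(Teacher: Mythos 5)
Your proposal is correct and follows essentially the paper's own route: the authors likewise obtain $\tilde{\varphi}\colon \Dpoly{\mathfrak{g}}{}\hhbar\to\Dpoly{}{}(M)\hhbar$ by quasi-inverting the formality $F_\mathfrak{g}$ (their Lemma~\ref{invert} realizes exactly your first alternative, choosing the quasi-inverse via homotopy transfer so that the horse-shoe square commutes \emph{strictly}, whence $(\rho_\hbar)_{\tilde{\varphi}}=B_\hbar$ on the nose by Remark~\ref{comptwist}), then twist and identify source and target via Proposition~\ref{prop:twistJ} and Proposition~\ref{defHoch}. The only slip is your claim that $\left(\Dpoly{}{}(M)\hhbar\right)^{B_\hbar}\cong C(\algebra{A}_\hbar;\algebra{A}_\hbar)$: the twisted polydifferential DGLA is only a sub-DGLA of the full Hochschild complex (Proposition~\ref{defHoch} gives an inclusion $\Dpoly{}{B_\hbar}(M)\hookrightarrow C^\bullet(\algebra{A}_\hbar,\algebra{A}_\hbar)$, not an isomorphism), but composing with this inclusion is all the theorem requires.
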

This theorem motivates a definition, which generalizes 
Drinfeld quantized action. 
 \begin{definition}[Deformation Symmetry]
   A deformation symmetry of a Hopf algebra $H$ in 
   a unital associative algebra $\algebra A$ is a map
   \begin{equation} 
     \Phi
     \colon 
     H_{poly}\longrightarrow C(\algebra A)
   \end{equation}
   of $L_\infty$-algebras. 
\end{definition}
Comparing the quantized structures obtained with our approach, it is 
easy to see that we recover Drinfeld's universal deformation formulas. 

\vspace{0.3cm}

The paper is organized as follows. In Section~\ref{sec:Preliminaries}
we recall the language of $L_\infty$-algebras and the theorem, due to
Kontsevich, stating the existence of an $L_\infty$-quasi-isomorphism
between polyvector fields and polydifferential operators on the formal
completion at $0\in \R^d$.  In Section~\ref{sec:formality} we briefly
discuss the proof of formality for Lie algebroids, following 
\cite{Calaque2006, Dolgushev2005, Dolgushev2005a}. In particular, we 
recall the twisting procedure in the curved context.
Section~\ref{sec:FormalityActions}
contains the main
results of the paper, i.e. the construction of an $L_\infty$-morphism
out of a Poisson action and the discussion on twisted structures and
deformation symmetry. Finally we compare our approach with Drinfeld's
deformation formulas.

\section*{Acknoledgments}
The authors are grateful to Ryszard Nest and Stefan Waldmann 
for the inspiring discussions. 
%Thanks to Iakovos Androulidakis for the encouragment 

%%%%%%%%%%%%%%%%%%%%%%%%%%%%%%%%%%%%%%%%%%%%%%%%%%%%%%%%%%%%%%%%%%%%%%%%%%%%%%%%%%%%%%%%%%%%%%%%%%%%%%%%%%%%

%
%Preliminaries
%

\section{Preliminaries}
\label{sec:Preliminaries}

Given a graded vector space $V^\bullet$ over $\mathbb{K}$ 
we denote the $k$-\emph{shifted} vector space by $V[k]^\bullet$, it
is given by
\begin{equation}
  V^\bullet[k]^l=V^{l+k}
\end{equation}

%%%%%%%%%%%%%%%%%%%%%%%%%%%%%%%%%%%%%%%%%%%%%%%%%%%%%%%%%%%%%%%%%%%%%%%%%%%%%%%%%%%%%%%%%%%%%%%%%%%%%%%%%%%%%

\subsection{$L_\infty$-setting}
\label{sec:Linfty}

We shall recall the definitions of $L_\infty$-algebra and
$L_\infty$-morphisms for the convenience of the reader (and to fix
certain conventions). For the rest of this section we consider a
field $\mathbb{K}$ of characteristic $0$. Although many constructions 
will also allow for replacement of $\mathbb{K}$ by a PID containing the rationals.
\begin{definition}[$L_\infty$-algebra]
  A degree $+1$ coderivation $Q$ on the co-unital conilpotent
  cocommutative coalgebra $S^c(\mathfrak{L})$ cofreely cogenerated by
  the graded vector space $\mathfrak{L}[1]^\bullet$ over $\mathbb{K}$ is
  called an $L_\infty$-structure on the graded vector space $\mathfrak{L}$ if
  $Q^2=0$.
\end{definition}
In more explicit terms we have
\begin{equation}
  S^c(\mathfrak{L})=\bigoplus_{k=0}^\infty \Anti^{k}
  \left(\mathfrak{L}[1]\right)
\end{equation}
equipped with the coproduct $\Delta$ given by 
\begin{align}
  \Delta(1)
  &=
  1\otimes 1
  \qquad \mbox{and}
  \\
  \Delta(\gamma_1\wedge\ldots\wedge \gamma_k)
  &=
  1\otimes\gamma_1\wedge\ldots\wedge\gamma_k
  +
  \gamma_1\wedge\ldots\wedge\gamma_k\otimes 1 
  +
  \overline{\Delta}(\gamma_1\wedge\ldots\wedge\gamma_k)
\end{align}
for $k\geq1$ and any $\gamma_i\in \mathfrak{L}[1]$. Here we have
\begin{equation}
  \overline{\Delta}(\gamma_1\wedge\ldots\wedge \gamma_k)=
  \sum_{i=1}^{k-1}\sum_{\sigma\in\mbox{\tiny Sh($i$,$k-i$)}}\epsilon(\sigma)
  \gamma_{\sigma(1)}\wedge\ldots\wedge \gamma_{\sigma(i)}\bigotimes
  \gamma_{\sigma(i+1)}\wedge\ldots\wedge \gamma_{\sigma(k)},
\end{equation}
where $\mathrm{Sh}(i,k-i)$ denotes the $(i,k-i)$ shuffles in the symmetric
group $S_k$ in $k$ letters and the \emph{Koszul sign}
$\epsilon(\sigma)=\epsilon(\sigma, \gamma_1,\ldots, \gamma_k)$ is
determined by the rule
\begin{equation}
  \gamma_1\wedge\ldots\wedge\gamma_k=\epsilon(\sigma)
  \gamma_{\sigma(1)}\wedge\ldots\wedge\gamma_{\sigma(k)}.
\end{equation}
Recall that $S^c(\mathfrak{L})$ is given by the co-invariants of the
tensor algebra for the action of the symmetric groups generated by
\begin{equation}
  (i \ i+1)(\gamma_1\otimes\ldots\otimes \gamma_k)
  =
  (-1)^{|\gamma_i||\gamma_{i+1}|}\gamma_{1}\otimes\ldots\otimes 
  \gamma_{i-1}\otimes \gamma_{i+1}\otimes \gamma_i\otimes \gamma_{i+2}\otimes\ldots \gamma_{k},
\end{equation}
where we use the vertical bars to denote the shifted degree, i.e. the
degree of $\gamma_i$ in $\mathfrak{L}[1]$. The co-unit is given by the
projection $\pr_\mathbb{K}$ onto the ground field $\mathbb{K}$.
\begin{remark}
  \label{rem:bialgebra}
  A direct computation shows that, denoting the flip $a\otimes
  b\mapsto (-1)^{|a||b|}b\otimes a$ by $\tau$, we have
  \begin{equation}
    \Delta\circ
    (\argument\wedge\argument)
    =
    (\argument\wedge\argument)
    \otimes
    (\argument\wedge\argument)  
    \circ
    (\id\otimes\tau\otimes\id)\circ\Delta\otimes\Delta.
  \end{equation}
  So we obtain the unital and co-unital bialgebra
  $(S^c(\mathfrak{L}),\argument\wedge\argument, 1\in \mathbb{K},
  \Delta,\pr_\mathbb{K})$, i.e. $1\wedge X=X$ for all $X\in
  S^c(\mathfrak{L})$.  We sometimes abuse notation by omitting
  $\wedge$ in favor of simple concatenation or superscripts, e.g.  $ab
  := a\wedge b$ and $x^3 := x\wedge x\wedge x$.
\end{remark}
\begin{lemma}[Characterization of coderivations]
  Every degree $+1$ coderivation $Q$ on $S^c(\mathfrak{L})$ is uniquely
  determined by the components
  \begin{equation}
    Q_n\colon \Anti^n(\mathfrak{L}[1])\longrightarrow \mathfrak{L}[2]
  \end{equation}
  by the formula 
  \begin{equation}
    Q(\gamma_1\wedge\ldots\wedge\gamma_n)
    =
    \sum_{k=0}^n\sum_{\sigma\in\mbox{\tiny Sh($k$,$n-k$)}}
    \epsilon(\sigma)Q_k(\gamma_{\sigma(1)}\wedge\ldots\wedge
    \gamma_{\sigma(k)})\wedge\gamma_{\sigma(k+1)}\wedge
    \ldots\wedge\gamma_{\sigma(n)},
  \end{equation} 
  where we use the conventions that $\mathrm{Sh}(n,0) = \mathrm{Sh}(0,n) = \{\id\}$
  and that the empty product equals the unit. 
\end{lemma}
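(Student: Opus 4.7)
The plan is to split the lemma into an existence statement—that the displayed formula defines a coderivation for any choice of components $Q_n$—and a uniqueness statement—that every coderivation arises in this way. The underlying principle is the cofreeness of $S^c(\mathfrak{L})$: a coderivation is determined by its composition with the projection $\pr\colon S^c(\mathfrak{L})\to \mathfrak{L}[1]$ onto the cogenerators.

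For existence, I would define $\tilde Q$ by the right-hand side of the formula and verify the coderivation identity $\Delta\circ \tilde Q = (\tilde Q\otimes\id + \id\otimes \tilde Q)\circ \Delta$ by direct evaluation on $\gamma_1\wedge\ldots\wedge\gamma_n$. Both sides expand into double sums indexed by shuffles; the key combinatorial observation is that a shuffle of $n$ letters into two blocks followed by a further shuffle within one of the blocks is equivalent to a single shuffle into three blocks. Tracking Koszul signs through this reorganization yields the identity. The main obstacle in this part is the bookkeeping of Koszul signs; conceptually everything is forced by cofreeness, but the signed combinatorics demands patient verification.

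For uniqueness, given a coderivation $Q$, set $Q_n := \pr\circ Q|_{\Anti^n(\mathfrak{L}[1])}$ and let $\tilde Q$ denote the coderivation built from these components by the formula. The difference $R := Q - \tilde Q$ is then a coderivation with $\pr\circ R = 0$. I would prove $R=0$ by induction on the arity $n$. For $n=0$, the coderivation identity forces $R(1)$ to be primitive and hence to lie in $\mathfrak{L}[1]$ (the primitives of $S^c(\mathfrak{L})$ coincide with $\mathfrak{L}[1]$); the vanishing of $\pr\circ R$ then gives $R(1)=0$. For the inductive step, assuming $R$ vanishes on $\Anti^{<n}(\mathfrak{L}[1])$, the identity $\Delta\circ R = (R\otimes\id + \id\otimes R)\circ\Delta$ applied to $\gamma_1\wedge\ldots\wedge\gamma_n$ combined with the explicit form of $\overline\Delta$ shows that all contributions beyond the primitive part $1\otimes R(\gamma_1\wedge\ldots\wedge\gamma_n) + R(\gamma_1\wedge\ldots\wedge\gamma_n)\otimes 1$ involve $R$ applied to elements of arity $<n$, which vanish. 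Hence $R(\gamma_1\wedge\ldots\wedge\gamma_n)$ is primitive and therefore zero by the same argument used in the base case.
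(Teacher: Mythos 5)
Your proposal is correct and is essentially an expanded version of the paper's own (one-line) proof, which simply invokes writing out both sides of the defining equation $\Delta\circ Q = (Q\otimes\id + \id\otimes Q)\circ\Delta$; your existence step is exactly that shuffle computation, and your uniqueness step via $\pr\circ R = 0$, the identification of primitives with $\mathfrak{L}[1]$ (valid here since $\mathbb{K}$ has characteristic $0$), and induction on arity is the standard cofreeness argument the paper leaves implicit. No gaps: the difference of coderivations is again a coderivation, and $\pr\circ\tilde Q|_{\Anti^n} = Q_n$ holds because only the $k=n$ term of the formula lands in $\mathfrak{L}[1]$, so your induction goes through as stated.
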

\begin{proof}
  It follows by simply writing out both sides of the defining equation
  \begin{equation}
    \Delta\circ Q
    =
    (Q\otimes \id+\id\otimes Q)\circ\Delta.
  \end{equation}
\end{proof}
Note that $Q_0(1)$ is of degree $1$ in $\mathfrak{L}[1]$ (thus of
degree $2$ in $\mathfrak{L}$).  The condition $Q^2=0$ can now be
expressed in terms of a quadratic equation in the components $Q_n$.
\begin{example}[Curved Lie algebras]
  \label{ex:CurvedLie}
  Our main example of an $L_\infty$-algebra is given by (curved) Lie
  algebras, i.e. the tuple $(\mathfrak{L},R,\D,[\argument,\argument])$ where
  we set $Q_0(1) = R$, $Q_1 = \D$, $Q_2 = [\argument,\argument]$ and $Q_i=0$
  for all $i\geq 3$. The condition $Q^2=0$ amounts to:
  \begin{itemize} 
  \item $\D R=0$, 
  \item $\D^2(\argument)=[R,\argument]$,
  \item $\D$ is a derivation of $[\argument,\argument]$,
  \item The graded Jacobi identity for $[\argument,\argument]$.
  \end{itemize}
\end{example}
\begin{remark}
  We should note that the our definition of $L_\infty$-algebra is
  usually called \emph{curved} $L_\infty$-algebra (see e.g.~
  \cite{Markl100}).  Although this definition is also not set in
  stone, see for instance \cite{GradyGwilliam} for yet another notion
  of curved $L_\infty$-algebra.  For the purpose of this paper it is,
  however, more convenient to call the curved version simply
  $L_\infty$-algebra.  The only $L_\infty$-algebras playing a role in this paper are, however, the flat $L_\infty$-algebras, i.e.
  those having $Q_0=0$.  The usual definition for an
  $L_\infty$-algebra thus coincides with our definition of flat
  $L_\infty$-algebra.
\end{remark}
\begin{remark}
  \label{rem:filtration}
  In the following we have to deal with various infinite sums. In
  order for this to make sense, we always consider only
  $L_\infty$-algebras $\mathfrak{L}$ that are equipped with a
  decreasing filtration
  \begin{equation}
    \mathfrak{L}
    =
    \mathcal{F}^0\mathfrak{L}
    \supset\mathcal{F}^1\mathfrak{L}
    \supset\ldots\supset\mathcal{F}^k\mathfrak{L}\supset\ldots,
  \end{equation}
  respecting the $L_\infty$-structure and which is moreover
  \emph{complete}, i.e.
  \begin{equation}
    \bigcap_k\mathcal{F}^k\mathfrak{L}
    =
    \{0\}.
  \end{equation}
  This yields a corresponding complete metric topology and we consider
  convergence of infinite sums in terms of this topology.
\end{remark}
\begin{definition}[$L_\infty$-morphisms]
  \label{def:Linftymorph}
  Let $\mathfrak{L}$ and $\widetilde{\mathfrak{L}}$ be
  $L_\infty$-algebras.  A degree $0$ filtration preserving co-unital
  co-algebra morphism
  \begin{equation}
    F\colon 
    S^c(\mathfrak{L})
    \longrightarrow 
    S^c(\widetilde{\mathfrak{L}})
  \end{equation}
  such that $FQ = \widetilde{Q}F$ is called
  an $L_\infty$-morphism.
\end{definition}
\begin{lemma}[Characterization of co-algebra morphisms]
  A co-algebra morphism $F$ from\\ $S^c(\mathfrak{L})$ to
  $S^c(\widetilde{\mathfrak{L}})$ is uniquely determined by its
  components, also called Taylor coefficients,
  \begin{equation}
    F_n
    \colon 
    \Anti^n(\mathfrak{L}[1])
    \longrightarrow 
    \widetilde{\mathfrak{L}}[1],
  \end{equation}
  where $n\geq 1$. Namely, we set $F(1)=1$ and use the formula 
  \begin{equation}
    \label{eq:coalgebramorphism}
    \begin{gathered}
      F(\gamma_1\wedge\ldots\wedge\gamma_n)= 
      \\     
      \sum_{p\geq1}\sum_{\substack{k_1,\ldots, k_p\geq1\\k_1+\ldots+k_p=n}}
      \sum_{\sigma\in \mbox{\tiny Sh($k_1$,..., $k_p$)}}\frac{\epsilon(\sigma)}{p!}
      F_{k_1}(\gamma_{\sigma(1)}\wedge\ldots\gamma_{\sigma(k_1)})\wedge\ldots\wedge 
      F_{k_p}(\gamma_{\sigma(n-k_p+1)}\wedge\ldots\wedge\gamma_{\sigma(n)}),
    \end{gathered}
  \end{equation}
  where $Sh(k_1,...,k_p)$ denotes the set of $(k_1,\ldots,
  k_p)$-shuffles in $S_n$ and $Sh(n) = \{\id\}$.
\end{lemma}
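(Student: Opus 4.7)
The plan is to exploit the universal property of the cofree cocommutative conilpotent coalgebra $S^c(\widetilde{\mathfrak{L}})$: any co-unital coalgebra morphism into it is uniquely determined by its composition with the canonical projection $\pr_{\widetilde{\mathfrak{L}}[1]}$ onto the cogenerators. I would therefore define the Taylor coefficients by
\begin{equation}
F_n := \pr_{\widetilde{\mathfrak{L}}[1]} \circ F \big|_{\Anti^n(\mathfrak{L}[1])}, \quad n \geq 1,
\end{equation}
set $F(1)=1$ by co-unitality, and prove that $F$ is recovered via the formula \eqref{eq:coalgebramorphism}.

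For the uniqueness direction I would argue by induction on $n$ that $F(\gamma_1\wedge\ldots\wedge\gamma_n)$ is determined by $F_1,\ldots,F_n$. The coalgebra morphism identity $\widetilde{\Delta}F = (F\otimes F)\Delta$ applied to $\gamma_1\wedge\ldots\wedge\gamma_n$, together with the explicit form of $\Delta$ from Remark \ref{rem:bialgebra}, expresses $\overline{\widetilde{\Delta}}F(\gamma_1\wedge\ldots\wedge\gamma_n)$ as a sum of products $F(\gamma_{\sigma(1)}\wedge\ldots\wedge\gamma_{\sigma(i)}) \otimes F(\gamma_{\sigma(i+1)}\wedge\ldots\wedge\gamma_{\sigma(n)})$ of strictly shorter wedges, which the inductive hypothesis has already pinned down. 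Since on the co-augmentation ideal of $S^c(\widetilde{\mathfrak{L}})$ the kernel of $\overline{\widetilde{\Delta}}$ consists precisely of the primitives $\widetilde{\mathfrak{L}}[1]$, the element $F(\gamma_1\wedge\ldots\wedge\gamma_n)$ is thereby determined modulo a single contribution in $\widetilde{\mathfrak{L}}[1]$; that contribution is exactly $F_n(\gamma_1\wedge\ldots\wedge\gamma_n)$ by definition.

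To derive the explicit formula, I would iterate: applying $\widetilde{\Delta}^{p-1}$ followed by $\pr_{\widetilde{\mathfrak{L}}[1]}^{\otimes p}$ to $F(\gamma_1\wedge\ldots\wedge\gamma_n)$ and using the coalgebra morphism property $p-1$ times transforms the expression into $(F_{k_1}\otimes\ldots\otimes F_{k_p})$ applied to $\Delta^{p-1}(\gamma_1\wedge\ldots\wedge\gamma_n)$, summed over $k_1+\ldots+k_p=n$. The iterated coproduct $\Delta^{p-1}$ produces the $(k_1,\ldots,k_p)$-shuffle sum of Remark \ref{rem:bialgebra}; reading off the $\Anti^p(\widetilde{\mathfrak{L}}[1])$-component of $F(\gamma_1\wedge\ldots\wedge\gamma_n)$ via this iterated projection yields precisely the $p$-th summand of \eqref{eq:coalgebramorphism}. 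Running the computation in reverse, starting from arbitrary $F_n$ and declaring $F$ by \eqref{eq:coalgebramorphism}, verifies $\widetilde{\Delta}F = (F\otimes F)\Delta$ and co-unitality, so existence follows as well.

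The main technical nuisance will be the combinatorial bookkeeping: the $\tfrac{1}{p!}$ prefactor must exactly reconcile the $S_p$-overcounting built into $\Anti^p(\widetilde{\mathfrak{L}}[1])$ with the unordered choice of block lengths $(k_1,\ldots,k_p)$ in the decomposition of $n$, and every rearrangement has to track the Koszul sign $\epsilon(\sigma)$ through the shifted degrees in $\mathfrak{L}[1]$. None of this is conceptually difficult, but it is the step most prone to error and will consume most of the written proof.
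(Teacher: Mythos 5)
Your proposal is correct and is, modulo packaging, the same argument the paper compresses into the single line ``write out the defining equation $\Delta\circ F=(F\otimes F)\circ\Delta$'': you organize that expansion via the projection onto cogenerators (defining $F_n:=\pr_{\widetilde{\mathfrak{L}}[1]}\circ F|_{\Anti^n(\mathfrak{L}[1])}$), an induction on wedge length using $\ker\overline{\Delta}=\widetilde{\mathfrak{L}}[1]$ for uniqueness, and the characteristic-zero identity that $\frac{1}{p!}(\argument\wedge\ldots\wedge\argument)\circ\pr_{\widetilde{\mathfrak{L}}[1]}^{\otimes p}\circ\Delta^{(p-1)}$ is the projection onto $\Anti^p(\widetilde{\mathfrak{L}}[1])$ to recover the explicit formula with its $\frac{\epsilon(\sigma)}{p!}$ bookkeeping. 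One cosmetic remark: $F(1)=1$ does not follow from co-unitality alone but from $F(1)$ being group-like with counit $1$ in a conilpotent coalgebra, which forces it to equal the coaugmentation.
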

\begin{proof} 
  It simply follows by writing out the defining equation
  \begin{equation}
    \Delta\circ F
    =
    F\otimes F\circ \Delta.
  \end{equation}
\end{proof}
\begin{example} 
  Let $(\mathfrak{L},R,\D,[\argument,\argument])$ and
  $(\mathfrak{L}',R',\D',[\argument,\argument]')$ be two curved Lie
  algebras and consider the morphism $C\colon
  \mathfrak{L}\longrightarrow \mathfrak{L}'$ of curved Lie algebras,
  i.e. $C(R)=R'$, $C\D = \D'C$ and $C$ is a morphism of the underlying
  Lie algebras.  Then the map $F$ given by applying the formula
  \eqref{eq:coalgebramorphism} to the components $F_1=C$ and $F_i=0$
  for $i\geq 2$ is an $L_\infty$-morphism. In general, if $F\colon
  \mathfrak{L}\longrightarrow \mathfrak{L}'$ is an
  $L_\infty$-morphism, then $F_1(R)=R'$, but we only have
  $\D'F_1(\gamma)=F_1\D(\gamma)+F_2(R\wedge\gamma)$.
\end{example}
Note that, given an $L_\infty$-morphism of \emph{flat}
$L_\infty$-algebras $\mathfrak{L}$ and $\widetilde{\mathfrak{L}}$, we
obtain the map of complexes
\begin{equation}
  F_1
  \colon 
  (\mathfrak{L},Q_1)
  \longrightarrow 
  (\widetilde{\mathfrak{L}},\widetilde{Q}_1).
\end{equation}
\begin{definition}[$L_\infty$-quasi-isomorphism]
  \label{def:Linftyquis}
  An $L_\infty$-morphism $F$ is called $L_\infty$-quasi-isomorphism if
  $F_1$ is a quasi-isomorphism of complexes.
\end{definition}
The $L_\infty$-quasi-isomorphisms we deal with in this paper happen to
be the ones witnessing \emph{formality}, let us therefore introduce
the notion of formal $L_\infty$-algebras here.
\begin{definition}[Formal $L_\infty$-algebra]
  \label{defformal}
  An $L_\infty$-algebra $\mathfrak{L}$ is called formal if it is flat
  and admits an $L_\infty$-quasi-isomorphism
  \begin{equation}
    F
    \colon 
    \mathrm{H}(\mathfrak{L})\longrightarrow \mathfrak{L}
  \end{equation}
  for the $L_\infty$-structure canonically induced on the cohomology
  $\mathrm{H}(\mathfrak{L})$ of $\mathfrak{L}$.
\end{definition}
Finally, a crucial concept for this paper is the one of Maurer--Cartan
elements, that we define below.
\begin{definition}[Maurer-Cartan element]
  \label{MCdef} 
  Given an $L_\infty$-algebra $(\mathfrak{L}, Q)$, an element $\pi\in
  \mathcal{F}^1\mathfrak{L}[1]^0$ is called a Maurer-Cartan or MC
  element if it satisfies the following equation
  \begin{equation}
    \label{eq:MC}
    \sum_{n=0}^\infty \frac{Q_n(\pi^n)}{n!}=0.
  \end{equation}
\end{definition}
%

%%%%%%%%%%%%%%%%%%%%%%%%%%%%%%%%%%%%%%%%%%%%%%%%%%%%%%%%%%%%%%%%%%%%%%%%%%%%%%%%%%%%%%%%%%%%%%%%%%%%%%%%%%%%%%%%%%%%%%%

\subsection{Local Formality}
\label{sec:LocalFormality}

Let us denote the formal completion at $0\in \R^d$ by $\Rf$. The
smooth functions $\Cinfty(\Rf)$ on $\Rf$ are given by the algebra
\begin{equation}
  \Cinfty(\Rf)
  :=
  \varprojlim_{k\rightarrow \infty} \Cinfty(\R^d)/\mathcal{I}_0^k,
\end{equation} 
where $\mathcal{I}_0$ denotes the ideal of functions vanishing at
$0\in\R^d$.  Note that $\Cinfty(\Rf)$ comes equipped with the complete
decreasing filtration
\begin{equation}
  \Cinfty(\Rf)\supset\mathcal{I}_0\supset\mathcal{I}_0^2\supset\ldots
\end{equation}
and its corresponding (metric) topology. The Lie algebra of continuous
derivations of $\Cinfty(\Rf)$ is denoted by
$\Tpoly{}{0}(\Rf)$. By setting $\Tpoly{}{-1} := \Cinfty(\Rf)$ we obtain
the Lie--Rinehart pair $(\Tpoly{}{-1},\Tpoly{}{0})$ and the graded
vector space
\begin{equation}
  \Tpoly{}{} (\Rf)
  :=
  \bigoplus_{k\geq -1}\Tpoly{}{k} (\Rf),
\end{equation}
where $\Tpoly{}{k} (\Rf) := \Anti^{k+1}\Tpoly{}{0} (\Rf)$ for $k\geq
0$.  Here the tensor product is understood to be over
$\Tpoly{}{-1}(\Rf)$ and completed.  Notice that there is no confusion
about grading here although it may seem unnatural at first glance. It
is actually obtained by shifting the natural grading.  The natural
structure is that of \emph{Gerstenhaber algebra}, but we are only
considering the underlying graded Lie algebra.  The Lie bracket
$\Schouten{\argument,\argument}$ on $\Tpoly{}{0}(\Rf)$ extends to a
graded Lie algebra structure on $\Tpoly{}{}(\Rf)$ by the rules
\begin{equation}
  \begin{aligned}
    \Schouten{f,g} 
    &= 
    0 ,
    \\
    \Schouten{X_0,f}  
    &=  
    X_0(f), 
    \\
    \Schouten{X_0\wedge\ldots\wedge X_k,Y} 
    &=      
    \sum_{j=0}^k(-1)^{kl+j}\Schouten{X_j,Y}\wedge X_0\wedge\ldots\wedge
    X_{j-1}\wedge X_{j+1}\wedge\ldots\wedge X_k
  \end{aligned}
\end{equation} 
for all $f,g \in \Tpoly{}{-1}(\Rf)$, $X_0,\ldots, X_k \in
\Tpoly{}{0}(\Rf)$ and $Y \in \Tpoly{}{l}(\Rf)$.

The universal enveloping algebra of the Lie-Rinehart pair
$(\Tpoly{}{-1}(\Rf),\Tpoly{}{0}(\Rf))$ is denoted by
$\Dpoly{}{0}(\Rf)$. Recall that $\Dpoly{}{0}(\Rf)$ is naturally
equipped with the structures of a bialgebra (see
e.g. \cite{Moerdijk2010}). More precisely, $\Dpoly{}{0}(\Rf)$ allows
an $\R$-algebra structure $\cdot$ and an $\R$-coalgebra structure
$\Delta$. We extend the algebra structure in the obvious
(componentwise) way to
\begin{equation}
  \Dpoly{}{}(\Rf)
  :=
  \bigoplus_{k\geq -1}\Dpoly{}{k}(\Rf),
\end{equation} 
where $\Dpoly{}{-1} (\Rf):= \Tpoly{}{-1}(\Rf)$ and $\Dpoly{}{k} (\Rf)
:= \left(\Dpoly{}{0}(\Rf)\right)^{\otimes k+1}$. Again the
tensor product is understood to be over $\Dpoly{}{-1}(\Rf)$ and
completed. This allows us to define two $\R$-bilinear operations
$\bullet$ and $[\argument,\argument]_G$ given by
\begin{equation}
\label{eq:Pre-Lie}
  P_1\bullet P_2
  :=
  \sum_{i=0}^{k_1}
  (-1)^{ik_2}(\id^{\otimes i}\otimes\Delta^{(k_2)}\otimes \id^{\otimes k_1-i})(P_1)\cdot (1^{\otimes i}\otimes P_2\otimes 1^{\otimes k_1-i})
\end{equation}
and 
\begin{equation}
 \label{eq:Lie}
  [P_1,P_2]_G
  :=
  P_1\bullet P_2-(-1)^{k_1k_2}P_2\bullet P_1
\end{equation}
where $P_1\in \Dpoly{}{k_1}(\Rf)$, $P_2\in\Dpoly{}{k_2}(\Rf)$ and
$\Delta^{(k)}$ denotes the $k$-th iteration of $\Delta$ given by 
$(\Delta\otimes\id^{\otimes k-1})(\Delta\otimes \id^{\otimes
  k-2})\ldots(\Delta\otimes \id)\Delta$. Note that the
bracket $[\argument,\argument]_G$ defines a graded Lie algebra
structure on $\Dpoly{}{}(\Rf)$.
\begin{theorem}[Kontsevich\cite{kontsevich:2003a}]
  \label{thm:kontsevich}
  There exists an $L_\infty$-quasi-isomorphism between DGLA's
  \begin{equation}
    \label{eq:kontsevich}
    \mathscr{K}
    \colon 
    \left(\Tpoly{}{}(\Rf),0,\Schouten{\argument,\argument}\right)
    \longrightarrow 
    \left(\Dpoly{}{}(\Rf),\partial,[\argument,\argument]_G\right)
  \end{equation} 
  where $\partial = [\mu,\argument]_G$ for $\mu=1\otimes 1\in
  \Dpoly{}{1}(\Rf)$. Moreover
  \begin{enumerate} 
  \item $\mathscr{K}$ is $\group{GL}(d,\R)$ equivariant;
  \item\label{thm:kontsevich2} $\mathscr{K}_n(X_1,\ldots, X_n)=0$ for all $X_i\in
    \Tpoly{}{0}(\Rf)$ and $n>1$;
  \item $\mathscr{K}_n(X,Y_2,\ldots, Y_n)=0$ for all
    $Y_i\in\Tpoly{}{}(\Rf)$ and $n\geq 2$ whenever
    $X\in\Tpoly{}{0}(\Rf)$ is induced by the action of
    $\mathfrak{gl}(d,\R)$.
  \end{enumerate}
\end{theorem}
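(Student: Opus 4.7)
The plan is to construct $\mathscr{K}$ explicitly through Kontsevich's graph formula. For each $n \geq 1$ and each admissible directed graph $\Gamma$ with $n$ aerial vertices in the upper half plane and $m$ ground vertices on the real line, where admissibility requires each aerial vertex to have outgoing valency equal to the tensor degree of the polyvectorfield it will host, I would associate (a) a polydifferential operator $B_\Gamma$ obtained by contracting partial derivatives along the edges, and (b) a weight $w_\Gamma \in \R$ given by an integral over the Fulton--MacPherson--Kontsevich compactification $\overline{C}_{n,m}$ of the product of propagator one-forms, one per edge, where the propagator is (a symmetric variant of) the differential of the hyperbolic angle function on the upper half plane. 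The Taylor components are then defined by
\begin{equation}
\mathscr{K}_n(\gamma_1 \wedge \cdots \wedge \gamma_n) = \sum_{m \geq 0} \sum_{\Gamma \in G_{n,m}} w_\Gamma \, B_\Gamma(\gamma_1, \ldots, \gamma_n).
\end{equation}

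The main obstacle will be verifying the quadratic $L_\infty$-relation for $\mathscr{K}$. My strategy is to apply Stokes' theorem to the exact form $\D$ of the propagator product on $\overline{C}_{n,m}$; since the integral of an exact form over a compact manifold with corners equals the sum of integrals over its codimension-one boundary strata, that sum must vanish. These strata fall into two families: type (S1) strata in which a cluster of aerial vertices collapses to a single interior point, whose contributions reproduce precisely the Schouten bracket terms on the polyvectorfield side, and type (S2) strata in which a cluster containing consecutive ground vertices (together with possibly some aerial vertices) collapses onto the real line, whose contributions reproduce the Hochschild differential and the Gerstenhaber bracket on the target side. Matching these requires Kontsevich's vanishing lemmas ruling out contributions from strata with too many collapsing vertices, established via degree counting of the integrated forms against the dimensions of the boundary strata.

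For the quasi-isomorphism property, I would compute $\mathscr{K}_1$ directly: only the graph with $n = 1$ and $k+1$ straight edges to distinct ground vertices contributes, with weight $1/(k+1)!$, so $\mathscr{K}_1$ coincides with the Hochschild--Kostant--Rosenberg antisymmetrisation map, which is classically known to be a quasi-isomorphism of complexes. The equivariance property (i) is immediate from the construction, as only flat structures and affine-invariant propagators enter. Property (ii) follows because a graph with only vector fields at aerial vertices has exactly one outgoing edge per aerial vertex; for $n \geq 2$ one can pair graphs under Kontsevich's involution reversing an edge between two aerial vertices, producing cancelling weights. Property (iii), the vanishing lemma for the $\mathfrak{gl}(d,\R)$-action, is the most delicate: when one argument $X$ is a linear vector field, the corresponding aerial vertex carries both incoming and outgoing edges to the same partner vertex, and the relevant integrand is shown to be exact modulo an explicit boundary contribution, whence the integral vanishes for $n \geq 2$ by a dedicated residue computation.
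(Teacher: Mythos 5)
The paper itself offers no proof of this theorem: it is quoted from \cite{kontsevich:2003a}, and your outline --- admissible graphs $\Gamma$, operators $B_\Gamma$ by contraction along edges, weights $w_\Gamma$ as integrals of products of angle one-forms over the compactified configuration spaces $\overline{C}_{n,m}$, Stokes' theorem on the codimension-one boundary strata to produce the quadratic $L_\infty$-relation, and $\mathscr{K}_1$ identified with the Hochschild--Kostant--Rosenberg map --- is precisely the route of that cited source. The passage from $\R^d$ to $\Rf$, which you do not address, is harmless: each $B_\Gamma$ is a universal polydifferential expression in the coefficients, so the Taylor components descend to the formal completion, and property \textit{(i)} holds because the weights are universal constants while $B_\Gamma$ is tensorial.

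There is, however, one step that would fail as you state it. You claim the vanishing lemmas ruling out the unwanted boundary strata are ``established via degree counting of the integrated forms against the dimensions of the boundary strata.'' Degree counting only kills those strata in which the number of edges internal to the collapsing cluster differs from the fiber dimension (which is $2k-3$ for $k$ aerial vertices collapsing at an interior point). The critical case --- a cluster of $k\geq 3$ aerial vertices collapsing in the interior with exactly $2k-3$ internal edges --- has matching degrees, and its vanishing is the genuinely hard analytic input of the proof (the vanishing lemma of Sect.~6.6 of \cite{kontsevich:2003a}): the integral over $\overline{C}_k$, $k\geq 3$, of any product of $2k-3$ angle forms is zero, proved by splitting $\D\phi$ into holomorphic and antiholomorphic parts and an inductive argument with logarithms, not by dimension bookkeeping. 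Without this lemma the Stokes identity produces terms beyond the Schouten and Gerstenhaber ones, and the $L_\infty$-relation does not close. Two smaller corrections in the same spirit: in \textit{(ii)}, for $n>2$ the vanishing is trivial because the target $\Dpoly{}{1-n}(\Rf)$ is zero in those degrees, so only $n=2$ (the two-cycle graph with $m=0$) requires the weight-cancellation symmetry you invoke; and in \textit{(iii)} the linearity of $X$ forbids more than one incoming edge at its vertex, after which the needed input is again a specific angle-form vanishing statement for the resulting one-in--one-out vertex, which your phrase ``exact modulo an explicit boundary contribution'' gestures at but does not pin down.
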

%Note in particular that, by definition, if $h\in
%\Dpoly{}{0}$ then $\partial h=-\overline{\Delta}(h)$, i.e. the reduced
%coproduct of the augmented coalgebra $\Dpoly{}{0}$.

%%%%%%%%%%%%%%%%%%%%%%%%%%%%%%%%%%%%%%%%%%%%%%%%%%%%%%%%%%%%%%%%%%%%%%%%%%%%%%%%%%%%%%%%%%%%%%%%%%%%%%%%%

%
%Dolgushev-Fedosov-Kontsevich formality
%
\section{Formality for Lie algebroids}
\label{sec:formality}

In this section we recall the formality theorem for Lie
algebroids, which is due to Calaque, see \cite{Calaque2005}. The proof
of this theorem follows the lines of Dolgushev's construction
\cite{Dolgushev2005a, Dolgushev2005} of the
$L_\infty$-quasi-isomorphism from polyvectorfields to polydifferential
operators. The main ingredients are Fedosov's methods
\cite{fedosov:1994a} concerning formal geometry, Kontsevich's
quasi-isomorphism \cite{kontsevich:2003a} and the twisting procedure
inspired by Quillen \cite{Quillen69} (although we use Dolgushev's
version \cite{Dolgushev2005}).  Since we only need
the result and not in fact the details of the construction we are 
rather brief here and refer to \cite{Calaque2005} for details.

\subsection{Fedosov resolutions}

As a first step, Calaque constructs
Fedosov resolutions of polyvector fields and polydifferential operators
of Lie algebroids.

Let us recall that a Lie algebroid is a vector bundle $E \to M$ over a
manifold $M$, equipped with a Lie bracket on sections $\Secinfty (E)$
and an anchor map $\rho \colon E \to TM$, preserving the Lie bracket,
such that
\begin{equation}
  [v, f w]_E
  =
  f[v, w]_E + (\rho(v)f)w,
\end{equation}
for any $v, w \in \Secinfty (E)$ and $f\in \Cinfty (M)$.
Equivalently, we can consider the algebra of $E$-differential forms
$\Secinfty(\wedge^\bullet E^*)$ endowed with the differential $\D_E$
given by $(\D_Ef)(v)=\rho(v)(f)$ for $f\in\Cinfty(M)$ and $X\in
\Secinfty(E)$, by
$(\D_E\alpha)(v,w)=\rho(v)(\alpha(w))-\rho(w)(\alpha(v))-\alpha([v,w]_E)$
for $X,Y\in \Secinfty(E)$ and $\alpha\in\Secinfty(E^*)$ and extended
as a derivation for the wedge product.

The definitions of the DGLA's $\Tpoly{}{}(\Rf)$ and $\Dpoly{}{}(\Rf)$
given in Section~\ref{sec:LocalFormality}
go through mutatis mutandis to define the DGLA's $ \Tpoly{E}{}(M)$ and
$ \Dpoly{E}{}(M)$ starting from the Lie-Rinehart pair
$\left(\Cinfty(M), \Secinfty(E)\right)$. Notice that the resulting
spaces $ \Dpoly{E}{k}(M)$ can be identified with the spaces of
$E$-polydifferential operators of order $k+1$.
In order to extend the result of Theorem~\ref{thm:kontsevich} to any
Lie algebroid, we need to consider the so-called \emph{Fedosov
  resolutions}.  The idea (coming from formal
geometry) consists in replacing the DGLA's $\Tpoly{}{}(\Rf)$ and
$\Dpoly{}{}(\Rf)$ by quasi-isomorphic DGLA's (using DGLA morphisms in
this case). For the rest of this section we consider a Lie algebroid
$E$ of rank $d$.
We denote by $ ^E\FibT{}$ the bundle of formal fiberwise $E$-polyvector
fields over $M$, this is the bundle 
associated to the principal bundle of general linear frames in
$E$ with fiber $\Tpoly{}{}(\Rf)$.
Similarly, the bundle $ ^E\FibD{}$ of formal fiberwise
$E$-polydifferential operators is the bundle over $M$ associated to the
principal bundle of general linear frames in $E$ with fiber
$\Dpoly{}{}(\Rf)$.
The Fedosov resolutions are given on the level of vector spaces by the
$E$-differential forms with values in 
$^E\FibT{}$ and $^E\FibD{}$ respectively. We denote these spaces
by $\FormsT{}{M}{}$ and $\FormsD{}{M}{}$ respectively. Note that these
spaces carry a natural DGLA structure, namely the one induced by the
structure on fibers (which is $\group{GL}(d,\R)$-equivariant).
\begin{lemma}
  \label{explicit}
  There exist $\group{GL}(d,\R)$-equivariant
  isomorphisms of algebras
  \begin{equation}
    \Cinfty(\Rf)\simeq \prod_{k=0}^\infty \Sym^kT^*_0\R^d\simeq \R\Schouten{\hat{x}_1,\ldots, \hat{x}_d}
  \end{equation}
  \begin{equation}
    \Tpoly{}{0}(\Rf)\simeq \Cinfty(\Rf)\otimes T_0\R^d.
  \end{equation}
  Here the Lie algebra structure on $\Cinfty(\Rf)\otimes T_0\R^d$ is
  induced from the action of $T_0\R^d$ as derivations at $0$ on
  $\Cinfty(\R^d)$.
\end{lemma}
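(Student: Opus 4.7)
The plan is to identify $\Cinfty(\Rf)$ with formal Taylor series at the origin, and then to use the universal property of continuous derivations on a complete filtered polynomial algebra.

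First I would analyze the finite quotients. Choose linear coordinates $x^1,\ldots,x^d$ on $\R^d$, and let $\hat{x}_i\in T_0^*\R^d$ be the induced dual basis, viewed as the class of $x^i$ modulo $\mathcal{I}_0^2$. The Taylor polynomial map sends $f\in \Cinfty(\R^d)$ to $\sum_{|\alpha|<k}\frac{1}{\alpha!}(\partial^\alpha f)(0)\,\hat{x}^\alpha$ and descends to an algebra isomorphism $\Cinfty(\R^d)/\mathcal{I}_0^k\simeq \bigoplus_{j<k}\Sym^j T_0^*\R^d$; surjectivity is essentially Borel's theorem (any Taylor polynomial is realized by a polynomial function). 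Passing to the inverse limit yields
\begin{equation}
\Cinfty(\Rf)=\varprojlim_k \Cinfty(\R^d)/\mathcal{I}_0^k\;\simeq\;\prod_{k\geq 0}\Sym^k T_0^*\R^d\;\simeq\;\R\Schouten{\hat{x}_1,\ldots,\hat{x}_d},
\end{equation}
where the last identification is the standard description of the completed symmetric algebra as a formal power series ring in the basis $\hat x_i$.

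Next I would verify the $\group{GL}(d,\R)$-equivariance. The natural $\group{GL}(d,\R)$-action on $\R^d$ (fixing $0$) pulls back to an action on $\Cinfty(\R^d)$ that preserves each $\mathcal{I}_0^k$, hence descends to $\Cinfty(\Rf)$. On the right-hand side, $\group{GL}(d,\R)$ acts on $T_0^*\R^d$ by the contragredient representation and this extends functorially (and multiplicatively) to the symmetric algebra, so the Taylor isomorphism is equivariant by naturality: both sides are the pullback action on the coordinate functionals $\hat x_i$.

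For the second isomorphism, I would use that a continuous derivation $X$ of the complete filtered algebra $\Cinfty(\Rf)=\R\Schouten{\hat x_1,\ldots,\hat x_d}$ is determined by its values $X(\hat x_i)\in \Cinfty(\Rf)$ (by the Leibniz rule on monomials and continuity for the $\mathcal{I}_0$-adic topology). Conversely, any assignment $\hat x_i\mapsto f_i\in\Cinfty(\Rf)$ extends uniquely to a continuous derivation $\sum_i f_i\frac{\partial}{\partial x^i}$, since the filtration $\mathcal{I}_0^k$ is mapped into $\mathcal{I}_0^{k-1}$ and partial sums converge. Identifying $\frac{\partial}{\partial x^i}$ with the dual basis of $T_0\R^d$ gives
\begin{equation}
\Tpoly{}{0}(\Rf)\;\simeq\;\Cinfty(\Rf)\otimes T_0\R^d,
\end{equation}
where the tensor product is the $\Cinfty(\Rf)$-module generated by $T_0\R^d$, and the induced Lie bracket is the one described in the statement. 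Equivariance under $\group{GL}(d,\R)$ follows from the equivariance of the pairing $T_0\R^d\otimes T_0^*\R^d\to\R$.

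The only real subtlety, and likely the main obstacle to present cleanly, is justifying the use of Borel's theorem to guarantee surjectivity at each finite order (so that $\Cinfty(\R^d)/\mathcal{I}_0^k$ is truly identified with polynomials of degree $<k$, not merely embedded in the symmetric algebra), together with the continuity statement needed to conclude that every derivation of the inverse limit is of the stated form. Once these points are in place, the rest reduces to naturality of the constructions under $\group{GL}(d,\R)$.
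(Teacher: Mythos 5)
Your argument is correct, but be aware that the paper contains no proof of this lemma at all: it defers to \cite[Prop.~2.1.10]{Niek} and \cite[Thm.~1.1.3]{MoerdijkReyes}, where essentially your Taylor-expansion argument is carried out. So your write-up is a self-contained version of the cited proof rather than a different route: finite quotients via the Taylor map, passage to the inverse limit, equivariance by naturality of the coordinate functionals, and the identification of continuous derivations of $\R\Schouten{\hat{x}_1,\ldots,\hat{x}_d}$ by their values on generators. The second half and the equivariance discussion are fine as stated; since $T_0\R^d$ is finite-dimensional no completion of $\Cinfty(\Rf)\otimes T_0\R^d$ is needed, and continuity for the $\mathcal{I}_0$-adic topology is exactly what makes a derivation determined by the $X(\hat{x}_i)$.

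One diagnosis in your closing paragraph is misplaced, and it is worth fixing because you identify it as ``the main obstacle.'' Borel's theorem is not what is needed at finite order, nor anywhere: surjectivity of the $k$-th Taylor map $\Cinfty(\R^d)\to\bigoplus_{j<k}\Sym^jT_0^*\R^d$ is trivial because polynomial functions are smooth, and surjectivity of the limit map $\Cinfty(\Rf)\to\prod_k\Sym^kT_0^*\R^d$ then comes for free from surjectivity at each finite stage of the tower (Borel's theorem would only be an alternative way to hit the full product directly). The genuine analytic content, which your sketch asserts but does not argue, is \emph{injectivity} of the descended map, i.e.\ that the kernel of the $k$-th Taylor map is exactly $\mathcal{I}_0^k$: a smooth function whose partial derivatives of order $<k$ all vanish at $0$ must be a sum of $k$-fold products of functions vanishing at $0$. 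This is Hadamard's lemma, $f(x)=f(0)+\sum_i x^i g_i(x)$ with $g_i$ smooth, applied recursively $k$ times, and it is precisely the content of the cited \cite[Thm.~1.1.3]{MoerdijkReyes}. The direction you do address (the Taylor map annihilates $\mathcal{I}_0^k$) is the easy Leibniz-rule direction. With Hadamard's lemma supplied in place of the appeal to Borel, your proof is complete.
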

The proof of the above lemma can be found in \cite[Prop. 2.1.10]{Niek} and \cite[Theorem
  1.1.3]{MoerdijkReyes}. It implies that
\begin{equation}
  \FormsT{}{M}{}
  \simeq 
  \Secinfty\left(\prod_{k=0}^\infty \Anti^\bullet E\otimes \Sym^k E^*\otimes \Anti^\bullet E^*\right)
\end{equation}
and similarly 
\begin{equation}
  \FormsD{}{M}{}
  \simeq 
  \Secinfty\left(\prod_{k=0}^\infty\left(\bigoplus_{l=0}^\infty \Sym^l E\right)^{\otimes\bullet}\otimes \Sym^k E^*\otimes\Anti^\bullet E^*\right),
\end{equation} 
where $\Anti$ and $\Sym$
denote the anti-symmetric and symmetric algebra, respectively.

The next step consists in finding a differential on the Fedosov
resolutions which is compatible with the graded Lie algebra structure
and which makes them into DGLA's quasi-isomorphic to $\Tpoly{E}{}(M)$
and $\Dpoly{E}{}(M)$, respectively.
Given some trivializing coordinate neighborhood $V\subset M$ of $E$, a
local frame $e_1, \dots, e_d$ and its dual frame $(x_1,\ldots, x_d)$,
we can define the operators
\begin{equation}
  \delta\colon \FormsT{}{V}{}\rightarrow \FormsT{}{V}{}
\end{equation} 
by the formula 
\begin{equation}
  \label{delta}\delta(Y)
  =
  \sum_{i=1}^d x_i\wedge\Schouten{{e_i},Y},
\end{equation} 
In other words $\delta=\Schouten{A_{-1},\argument}$ where
$A_{-1}\in\FormsT{1}{V}{0}$ is the one-form
$A_{-1}=\sum_{i=1}^dx_i\otimes e_i$.  One easily checks that $\delta$
does not depend on the choice of coordinates, since $A_{-1}$ is
independent of coordinates, and therefore extends to all of $M$.
By replacing $\Schouten{\argument,\argument}$ by
$[\argument,\argument]_G$ in \eqref{delta} we obtain the operators
\begin{equation}
  \delta\colon \FormsD{}{M}{}\longrightarrow\FormsD{}{M}{}.
\end{equation}
Note that, since $\Schouten{A_{-1},A_{-1}}=[A_{-1},A_{-1}]_G = 0$, we
have $\delta^2 = 0$. Furthermore, since it is given by an inner
derivation and $\delta\mu = 0$, $\delta$ is compatible with the
fiberwise Lie structures and thus yields DGLA structures.

The cohomology of the complexes $\left( \FormsT{l}{M}{},\delta\right)$
and $\left( \FormsD{l}{M}{},\delta\right)$ is given by the following
proposition (proved e.g in \cite[Prop.~2.1]{Calaque2005}).
\begin{proposition}
  We have that 
  \begin{equation}
    \Cohom{0}( \FormsT{}{M}{},\delta)
    \simeq
    \Gamma^\infty(\Anti^\bullet E)
    \hspace{0.3cm} \mbox{and}\hspace{0.3cm}
    \Cohom{0}( \FormsD{}{M}{},\delta)
    \simeq
    \Secinfty\left(\left(\bigoplus_{l=0}^\infty S^lE\right)^{\otimes\bullet}\right)
  \end{equation} 
  while 
  \begin{equation}
    \Cohom{>0}( \FormsT{}{M}{},\delta)=0
    \hspace{0.3cm}\mbox{and}\hspace{0.3cm}
    \Cohom{>0}( \FormsD{}{M}{},\delta)=0
  \end{equation}
\end{proposition}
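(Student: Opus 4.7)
The plan is to prove this as a fiberwise Koszul--Poincaré lemma. First I would work in a trivializing neighborhood $V \subset M$ for $E$ with local frame $e_1,\dots,e_d$ and dual frame $x_1,\dots,x_d$. Using the local fiber description from the preceding lemma, I would identify
\begin{equation*}
  \FormsT{\bullet}{V}{\bullet}
  \simeq
  \Secinfty\bigl(V;\Anti^\bullet E^* \otimes \Anti^\bullet E \otimes \widehat{\Sym}^\bullet E^*\bigr),
\end{equation*}
where the first $\Anti^\bullet E^*$ factor records the Fedosov form-degree and $\widehat{\Sym}^\bullet E^*$ accounts for the formal fiber coordinates $\hat{x}_i$. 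Unpacking $\delta = \Schouten{A_{-1},\argument}$ with $A_{-1} = \sum_i x_i \otimes e_i$ shows that $\delta$ coincides with the Koszul-type differential $\sum_i x_i \wedge \partial/\partial \hat{x}_i$, since $\Schouten{e_i,\argument}$ acts on the fiber as $\partial/\partial \hat{x}_i$.

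I would then introduce a local contracting homotopy
\begin{equation*}
  \kappa := \sum_i \hat{x}_i\, \iota_{e_i},
\end{equation*}
where $\iota_{e_i}$ contracts the form $x_i \in \Anti^1 E^*$. Using the graded anticommutator $\{x_k \wedge,\iota_{e_l}\} = \delta_{kl}$ and the Heisenberg relation $[\partial_{\hat{x}_k},\hat{x}_l] = \delta_{kl}$, one checks directly that
\begin{equation*}
  \delta\kappa + \kappa\delta = N_{\mathrm{form}} + N_{\mathrm{sym}},
\end{equation*}
the number operator counting total degree in the form variables $x_i$ together with the fiber variables $\hat{x}_i$. This operator is invertible on each of its positive eigenspaces, so the complex is acyclic in positive Fedosov form-degree, and in form-degree zero its cohomology is identified with sections having no $\hat{x}$-dependence and no form-part, yielding $\Secinfty(\Anti^\bullet E)$. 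Although the homotopy $\kappa$ is frame-dependent, the resulting decomposition into eigenspaces of $N_{\mathrm{form}}+N_{\mathrm{sym}}$ is canonical since the spectrum and the differential $\delta$ are intrinsic, so the local conclusion globalizes.

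The argument for $\FormsD{\bullet}{M}{\bullet}$ follows the same blueprint. By PBW, the fibers of $^E\FibD{\bullet}$ split locally as $\widehat{\Sym}^\bullet E^* \otimes (\Sym^\bullet E)^{\otimes \bullet}$, and the Gerstenhaber bracket $\Schouten{e_i,\argument}_G$ acts on a polynomial-coefficient polydifferential operator as the commutator with $\partial/\partial \hat{x}_i$, which once more is pure fiber differentiation in $\hat{x}_i$ and annihilates the symmetric $E$-factors. The same homotopy $\kappa$ then applies verbatim, producing the stated isomorphism $\Cohom{0}\simeq \Secinfty\bigl((\bigoplus_l \Sym^l E)^{\otimes \bullet}\bigr)$ and vanishing higher cohomology. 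The principal technical step to verify with care is the fiberwise identification of $\Schouten{e_i,\argument}_G$ as fiber differentiation on all of $^E\FibD{\bullet}$; this reduces to the derivation property of the Gerstenhaber bracket for a vector field, combined with the fact that translation-invariant differential operators commute with $\partial/\partial \hat{x}_i$.
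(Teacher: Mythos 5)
Your proof is correct and is essentially the argument behind the paper's citation to \cite[Prop.~2.1]{Calaque2005}: the standard Koszul homotopy computation, which is exactly the (normalized) operator $\delta^{-1}$ with $\delta^{-1}\delta+\delta\delta^{-1}+\sigma=\id$ that the paper itself invokes afterwards. Your globalization paragraph is more cautious than necessary: $\kappa=\sum_i \hat{x}_i\,\iota_{e_i}$ is contraction with the canonical element $\sum_i \hat{x}_i\otimes e_i$ corresponding to $\id_E\in\Secinfty(E^*\otimes E)$, hence frame-independent and globally defined just like $A_{-1}$, so the homotopy itself already lives on all of $M$.
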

Notice that
\begin{equation}
  \Cohom{0}( \FormsT{}{M}{},\delta)
  \simeq \hspace*{0.05cm}
   \Tpoly{E}{}(M)
  \hspace{0.3cm}\mbox{and}\hspace{0.3cm}
   \Cohom{0}( \FormsD{}{M}{},\delta)
  \simeq \hspace*{0.05cm}
   \Dpoly{E}{}(M)
\end{equation}
as vector spaces. This does not provide us with the quasi-isomorphisms
we are looking for, since $\Cohom{0}( \FormsT{}{M}{},\delta)$ carries
the trivial Lie algebra structure. To correct it, the idea is to
construct a perturbation of the differential $\delta$ that does not
affect the size of the cohomology, but only the Lie
algebra structure on cohomology.
Notice that the operator $\delta$ is of degree $-1$ in
terms of the filtration and so we may start perturbing at order $0$,
i.e. adding a connection in the bundle $E$.  The fact that the resulting
perturbation should square to zero forces us to choose a torsion-free
connection $\conn$.  This gives us the operators
\begin{equation}
  \conn
  \colon 
  \FormsT{}{M}{} \longrightarrow \FormsT{}{M}{}
  \hspace{0.3cm}\mbox{and}\hspace{0.3cm} 
  \conn
  \colon 
  \FormsD{}{M}{}\longrightarrow\FormsD{}{M}{}.
\end{equation} 
Thus we consider the corresponding operators $-\delta+\conn$ and
$-\delta+ \conn +\partial$. This leads to the problem that there
is no reason to assume that we can find $\conn$ such that $\conn^2=0$
(since not every Lie algebroid is flat). Following the idea of
Fedosov, we correct $-\delta + \conn$ by an inner derivation and make
the ansatz
\begin{equation}
  D
  :=
  -\delta + \conn + [A, \argument]
\end{equation}
with $A \in \FormsT{1}{M}{0}\hookrightarrow \FormsD{1}{M}{0}$ and
where $[A,\argument]$ means $\Schouten{A,\argument}$ or
$[A,\argument]_G$ depending on the situation.  The trick is to find
$A$ such that $D^2 = 0$, as proved in \cite[Prop.~2.2]{Calaque2005}.
\begin{lemma}
\label{lem:A}
  There exists a unique $A$ such that
  \begin{lemmalist}
   \item $\delta A = R + \conn A + \frac{1}{2} [A, A]$
   \item $\delta ^{-1} A = 0$.
  \end{lemmalist}
\end{lemma}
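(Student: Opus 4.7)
The plan is to solve the Fedosov-type equation by iteration, using a standard contracting homotopy for $\delta$. On both $\FormsT{}{M}{}$ and $\FormsD{}{M}{}$ one has an operator $\delta^{-1}$ (defined fiberwise in a trivialising chart by $\delta^{-1} = \sum x_i \cdot \iota(\partial/\partial x_i) /(p+q)$ on the summand of $E$-form degree $p$ and fiber polynomial degree $q>0$, and zero otherwise) satisfying the Hodge-type identity
\begin{equation}
\delta \delta^{-1} + \delta^{-1}\delta + \sigma = \id,
\end{equation}
where $\sigma$ projects onto the $\delta$-harmonic part, i.e.\ onto fiber polynomial degree $0$ and $E$-form degree $0$. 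Crucially, $\delta^{-1}$ raises the filtration degree $\mathcal{F}^\bullet$ coming from the $\Sym^k E^*$-grading by one.

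First, I would apply $\delta^{-1}$ to the target equation. Since the sought $A$ is required to have $\delta^{-1} A = 0$ and, being of $E$-form degree $1$, to satisfy $\sigma A = 0$, the identity above reduces the problem to the fixed-point equation
\begin{equation}
\label{eq:fedfix}
A = \delta^{-1}\Bigl(R + \conn A + \tfrac{1}{2}[A,A]\Bigr).
\end{equation}
Second, I would iterate \eqref{eq:fedfix} starting from $A^{(0)} = 0$, setting $A^{(n+1)} := \delta^{-1}\bigl(R + \conn A^{(n)} + \tfrac12 [A^{(n)},A^{(n)}]\bigr)$. Because $R$ has fiber polynomial degree at least $1$ and $\delta^{-1}$ strictly raises this degree, a routine induction shows that $A^{(n+1)} - A^{(n)}$ lies in $\mathcal{F}^{n+1}$. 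By the completeness of the filtration, the sequence converges to an element $A \in \FormsT{1}{M}{0}$ with $\delta^{-1} A = 0$, which satisfies \eqref{eq:fedfix}.

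Third, I would show that any $A$ satisfying \eqref{eq:fedfix} also satisfies the original equation $\delta A = R + \conn A + \tfrac12 [A,A]$. Setting $C := \delta A - R - \conn A - \tfrac12 [A,A]$ and using \eqref{eq:fedfix} one sees that $\delta^{-1}C = 0$ and $\sigma C = 0$. A direct computation, using $\delta^2 = 0$, $\delta\conn + \conn\delta = 0$, the Bianchi-type identity $\conn R = 0$, and the curvature identity $\conn^2 = [R,\argument]$ (which hold because $\conn$ is a torsion-free connection lifted to the associated bundle), yields
\begin{equation}
\delta C = -\conn C - [A,C],
\end{equation}
so that after applying $\delta^{-1}$ one obtains $C = \delta^{-1}\bigl(\conn C + [A,C]\bigr)$. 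The right-hand side raises filtration, so iterating and passing to the limit forces $C = 0$.

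For uniqueness, I would run the same contraction argument on the difference: if $A'$ is another solution, then $B := A - A'$ satisfies $\delta B = \conn B + \tfrac12 [A+A',B]$ with $\delta^{-1} B = 0$ and $\sigma B = 0$, whence $B = \delta^{-1}\bigl(\conn B + \tfrac12 [A+A',B]\bigr)$ and again $B \in \bigcap_n \mathcal{F}^n = \{0\}$. The only genuine obstacle is step three, where one must verify that the Bianchi and torsion-free identities conspire to give the closedness relation $\delta C = -\conn C - [A,C]$; modulo this (which is precisely the standard Fedosov calculation, cf.\ \cite[Prop.~2.2]{Calaque2005}), the argument is a clean contraction mapping on a complete filtered DGLA.
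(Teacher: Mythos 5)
Your proposal is correct and coincides with the argument the paper itself invokes: the paper gives no proof of Lemma~\ref{lem:A} beyond citing Calaque's Prop.~2.2, and that proof is exactly your scheme --- convert the equation via the homotopy identity $\delta\delta^{-1}+\delta^{-1}\delta+\sigma=\id$ and the normalization $\delta^{-1}A=0$, $\sigma A=0$ into the fixed-point equation $A=\delta^{-1}\bigl(R+\conn A+\tfrac{1}{2}[A,A]\bigr)$, solve it by the filtration-raising recursion in the complete filtered space, check via the Bianchi identities that the fixed point solves the original equation, and get uniqueness by the same contraction on the difference. The only cosmetic caveat is the sign in your closedness relation for $C$ (with the usual conventions one finds $\delta C=\conn C+[A,C]$ rather than its negative), but since the contraction argument only uses that the right-hand side is filtration-raising, this does not affect the proof.
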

Here $R$ denotes the curvature of $\conn$ expressed in terms of the
bundle $^E\FibT{}(M)$ (or $ ^E\FibD{}(M)$), i.e. it is given by the
equation
\begin{equation}
  \conn^2Y
  =
  [R,Y]
\end{equation}
and $\delta^{-1}$ is a particular $\delta$-homotopy from the
projection onto degree $0$, denoted $\sigma$, to the identity, i.e.
\begin{equation}
  \delta^{-1}\delta + \delta\delta^{-1} + \sigma
  =
  \id.
\end{equation} 
The condition $\delta^{-1}A = 0$ is simply a normalization condition
ensuring uniqueness of the solution.
\begin{proposition}
  \label{prop:DcohomM}
  We have  
  \begin{equation}
    \Cohom{>0}( \FormsT{}{M}{},D)
    =
    0
    \qquad \mbox{and} \qquad
    %\hspace{0.3cm}\mbox{and}\hspace{0.3cm} 
    \Cohom{>0}( \FormsD{}{M}{},D)
    =
    0.
  \end{equation} 
  Furthermore we have 
  \begin{equation}
  \begin{gathered}
    \Cohom{0}( \FormsT{}{M}{},D)\simeq \Cohom{0}( \FormsT{}{M}{},\delta)
    \\
    \Cohom{0}( \FormsD{}{M}{},D)\simeq \Cohom{0}( \FormsD{}{M}{},\delta).
  \end{gathered}
  \end{equation}
\end{proposition}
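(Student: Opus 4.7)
The plan is to adapt the standard Fedosov iterative argument: $D$ differs from $-\delta$ by the perturbation $R := \conn + [A, \cdot]$, which raises the fiber polynomial filtration, while the contracting homotopy $\delta^{-1}$ also raises it by one. Since $\delta$-cohomology is computed in the preceding proposition and the filtration is complete in the sense of Remark~\ref{rem:filtration}, all iterative constructions below will converge and transfer information from $\delta$ to $D$. The same argument works verbatim for $\FormsT{}{M}{}$ and $\FormsD{}{M}{}$, so I will describe it once.

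For the vanishing statement $\Cohom{>0}(\FormsT{}{M}{}, D) = 0$, let $a$ be $D$-closed of form degree $k \geq 1$. I would solve $Db = a$ iteratively, starting from the ansatz $b_0 = -\delta^{-1} a$. Using the homotopy identity $\delta^{-1}\delta + \delta\delta^{-1} + \sigma = \id$, noting that $\sigma$ vanishes on positive form degree, and the assumption $Da = 0$, a direct computation gives $Db_0 = a + \rho_0$ with $\rho_0$ living in strictly higher fiber filtration. Correcting by $b_1 := -\delta^{-1}\rho_0$ and iterating yields a total $b$ with $Db = a$ converging in the complete topology. For the identification $\Cohom{0}(\FormsT{}{M}{}, D) \simeq \Cohom{0}(\FormsT{}{M}{}, \delta)$, I construct an explicit lift $\tau\colon \Cohom{0}(\delta) \to \Ker(D)$ at form degree zero: given $y_0 \in \Cohom{0}(\delta)$, the equation $Dy = 0$ together with the normalizations $\sigma y = y_0$ and $\delta^{-1} y = 0$ is equivalent, upon applying $\delta^{-1}$ to $\delta y = (\conn + [A, \cdot])y$, to the fixed-point equation
\begin{equation}
  y = y_0 + \delta^{-1}(\conn + [A, \cdot])\, y,
\end{equation}
which admits a unique solution by contraction. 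Injectivity of $\tau$ is immediate from $\sigma\tau = \id$, and surjectivity follows from the vanishing of $\Cohom{>0}(D)$ together with uniqueness of the normalized lift.

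The main technical point to check, which is the crux of every Fedosov-type argument, is that the operator $\delta^{-1}(\conn + [A, \cdot])$ strictly increases the fiber polynomial filtration. The contribution $\delta^{-1}\conn$ raises it by one for free, while for $[A, \cdot]$ one must use the explicit form of $A$ from Lemma~\ref{lem:A}: since $A$ is constructed by iteratively applying $\delta^{-1}$ to $R + \conn A + \tfrac{1}{2}[A, A]$ starting from the curvature, it ends up with fiber polynomial degree $\geq 2$, so that $[A, \cdot]$ raises fiber degree by at least one. Given this strict increase, completeness of the filtration from Remark~\ref{rem:filtration} guarantees convergence of all geometric series and fixed-point iterations, concluding both parts of the proof.
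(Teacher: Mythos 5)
Your proposal is correct and follows essentially the same route as the paper, which does not spell out an argument but defers to Calaque's Theorem~2.3 --- and that theorem is proved by precisely this Fedosov-type scheme: the homotopy identity $\delta^{-1}\delta+\delta\delta^{-1}+\sigma=\id$, the observation that $\delta^{-1}(\conn+[A,\argument])$ strictly raises the fiber polynomial filtration (using that $A$ starts in fiber degree $2$ by the recursion of Lemma~\ref{lem:A}), and convergence of the resulting fixed-point iterations in the complete topology. The only point you compress is the converse direction of your ``equivalence'' claim (that a solution of the fixed-point equation is automatically $D$-flat, which needs the auxiliary argument applying the same contraction to $c:=Dy$), but all ingredients for it are already present in your write-up.
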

\begin{proof}
  \cite[Thm.~2.3]{Calaque2005}
\end{proof}
Let us denote the isomorphisms from the above Proposition by $\tau$.
Then, using a Poincar\'e-Birkhoff-Witt-type isomorphism, Calaque constructs an
isomorphism (see \cite[Sec.~2.3]{Calaque2005})
%\textcolor{red}{I am not sure, he cites Rinehart...}
\begin{equation}
  \nu 
  \colon 
  \Ker\delta\cap\FormsD{0}{M}{}\longrightarrow \Dpoly{E}{}(M)
\end{equation}
of filtered vector spaces. 

Similarly, but in an easier way, we obtain an isomorphism 
\begin{equation}
  \nu
  \colon
  \Ker\delta\cap \FormsT{0}{M}{}\longrightarrow \Tpoly{E}{}(M)
\end{equation}
of graded vector spaces.
Finally, as proved in \cite[Prop.~2.4-2.5]{Calaque2005}, we get:
\begin{theorem}[Fedosov Resolutions]
  \label{thm:FedRes}
  The maps 
  \begin{equation}
    \lambda_D
    \colon 
    \left(\Dpoly{E}{}(M),\partial\right)
    \longrightarrow
    \left(\FormsD{}{M}{},\partial + D\right)
  \end{equation} 
  and 
  \begin{equation}
    \lambda_T
    \colon 
    \left(\Tpoly{E}{}(M),0\right)
    \longrightarrow 
    \left(\FormsT{}{M}{},D\right)
  \end{equation} 
  both given by $\tau\circ \nu^{-1}$  are DGLA quasi-isomorphisms.
\end{theorem}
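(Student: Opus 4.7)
The plan is to verify, for each of $\lambda_T$ and $\lambda_D$, three properties in turn: that it respects the bracket, that it respects the differential, and that it induces an isomorphism on cohomology. Since both maps are defined as $\tau\circ\nu^{-1}$, I would treat $\nu^{-1}$ and $\tau$ separately and then compose.

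First, I would establish that $\nu^{-1}$ is an isomorphism of graded Lie algebras from $\Tpoly{E}{}(M)$ (resp.\ from $\Dpoly{E}{}(M)$) onto $\Ker\delta\cap\FormsT{0}{M}{}$ (resp.\ $\Ker\delta\cap\FormsD{0}{M}{}$). In the polyvector case this uses only that $\delta=\Schouten{A_{-1},\argument}$ is an inner derivation (so $\Ker\delta$ is a sub–Lie algebra) together with the Poincaré–Birkhoff–Witt flavour of $\nu$ identifying the fibrewise Schouten bracket with the one on $M$. In the polydifferential case one additionally checks that $\nu$ intertwines the fibrewise coproduct and hence the Gerstenhaber differential $\partial=[\mu,\argument]_G$, since $\mu=1\otimes 1$ corresponds to the constant cochain under $\nu$.

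Second, I would show that $\tau$ restricts to an isomorphism of graded Lie algebras from $\Ker\delta\cap\FormsT{0}{M}{}$ onto $\Ker D\cap\FormsT{0}{M}{}$ (and similarly in the polydifferential case, where it moreover intertwines $\partial$). The key is that $\tau(X)$ can be presented as the unique $D$-closed lift of $X$, obtained as the fixed point of the contraction
\begin{equation}
  X \longmapsto X - \delta^{-1}\bigl(\conn X + [A,X]\bigr),
\end{equation}
using the homotopy $\delta^{-1}$ satisfying $\delta^{-1}\delta+\delta\delta^{-1}+\sigma=\id$. Since $\conn$, $\delta^{-1}$ and $[A,\argument]$ all behave as derivations of the relevant brackets on fibres, the lift of a bracket and the bracket of lifts are both $D$-closed and both project via $\sigma$ to the same element of $\Ker\delta\cap\FormsT{0}{M}{}$; the uniqueness clause of Lemma~\ref{lem:A} (more precisely, the same uniqueness principle applied to $\tau$) then forces them to coincide. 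The compatibility of $\tau$ with $\partial$ follows because $\conn\mu=0$ and $\partial A=0$, so $\partial$ commutes with the contraction defining $\tau$.

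Third, the chain map property is then immediate: for $\lambda_T$ the source differential is zero and the target of $\lambda_T$ lies in $\Ker D$; for $\lambda_D$ the identity $(\partial+D)\lambda_D(P)=\lambda_D(\partial P)$ follows by combining the two intertwining statements above. The quasi-isomorphism property is then read off directly from Proposition~\ref{prop:DcohomM}: cohomology is concentrated in form-degree zero and coincides with $\Tpoly{E}{}(M)$ (resp.\ $\Dpoly{E}{}(M)$) via $\nu\circ\sigma$, and by construction $\lambda_T$ and $\lambda_D$ are the sought-after sections of these identifications. I expect the main obstacle to be the bracket-compatibility of $\tau$ in step two, since a priori a $D$-closed lift of $[X,Y]$ need not equal $[\tau(X),\tau(Y)]$; the whole argument rests on the uniqueness of $D$-closed representatives in the image of $\sigma$, which must be handled carefully in both the polyvector and polydifferential settings.
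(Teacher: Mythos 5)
The paper offers no internal proof of Theorem~\ref{thm:FedRes}: it is imported wholesale from \cite[Prop.~2.4-2.5]{Calaque2005}, whose argument does have the overall shape you aim for (uniqueness of $D$-closed lifts in form degree zero with prescribed $\sigma$-projection). But your factorization of the problem into two separate Lie-algebra isomorphisms is genuinely broken. You claim $\nu^{-1}$ is an isomorphism of graded Lie algebras onto $\Ker\delta\cap\FormsT{0}{M}{}$, ``identifying the fibrewise Schouten bracket with the one on $M$''. This is false: $\Ker\delta\cap\FormsT{0}{M}{}$ consists of fibrewise polyvectors of $\Sym$-degree zero, i.e.\ with coefficients constant in the formal fibre variables, and the fibrewise Schouten bracket of two such elements vanishes identically. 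The paper says so explicitly right after computing the $\delta$-cohomology (``$\Cohom{0}(\FormsT{}{M}{},\delta)$ carries the trivial Lie algebra structure''), and this is precisely why $\delta$ must be perturbed to $D$ in the first place. The polydifferential case fails similarly: there $\nu$ is a PBW-type symbol map, and symbol maps do not intertwine Gerstenhaber brackets (the fibrewise bracket of two constant-coefficient lifts of vector fields is zero, while $\nu^{-1}\left(\Schouten{X,Y}\right)$ is not). Consequently your second step is impossible as stated: $\tau$ cannot be a Lie algebra isomorphism from the abelian algebra $\Ker\delta\cap\FormsT{0}{M}{}$ onto $\Ker D\cap\FormsT{0}{M}{}$, whose bracket is nontrivial. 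The paper is careful to call $\nu$ an isomorphism of filtered (resp.\ graded) \emph{vector spaces} only; bracket compatibility can hold only for the composite $\tau\circ\nu^{-1}$, not for either factor.

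Even reading your step two as a claim about the composite, the decisive point is missing. The uniqueness part is fine: in form degree zero a $D$-closed element $u$ satisfies $u=\sigma(u)+\delta^{-1}\left(\conn u+[A,u]\right)$ (note the sign; your contraction has a spurious minus), and since the perturbation raises the $\Sym$-filtration, $u$ is determined by $\sigma(u)$; together with Proposition~\ref{prop:DcohomM} this reduces everything to showing that $\sigma\left([\lambda_T X,\lambda_T Y]\right)$ is the $\Sym$-degree-zero representative of $\Schouten{X,Y}$. Your justification — that $\conn$, $\delta^{-1}$ and $[A,\argument]$ ``behave as derivations of the relevant brackets'' — is wrong for $\delta^{-1}$: a homotopy is not a derivation, and the recursion defining $\tau$ does not respect brackets termwise. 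What actually closes the gap is a lowest-order computation: writing $\lambda_T X=\nu^{-1}X+\delta^{-1}\conn\,\nu^{-1}X+\ldots$, the $\Sym$-degree-zero component of $[\lambda_T X,\lambda_T Y]$ comes only from the cross terms $[\delta^{-1}\conn\,\nu^{-1}X,\nu^{-1}Y]+[\nu^{-1}X,\delta^{-1}\conn\,\nu^{-1}Y]$ (the $A$-terms cannot reach $\Sym$-degree zero since $A$ begins in $\Sym$-degree two), and the torsion-freeness of $\conn$ is exactly what makes the Christoffel contributions combine into the $E$-Schouten (resp.\ Gerstenhaber) bracket rather than cancel outright. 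This computation is the heart of \cite[Prop.~2.4-2.5]{Calaque2005} and is absent from your proposal. Once it is supplied, the remainder of your outline — the chain-map statements ($\lambda_T$ lands in $\Ker D$; $\partial$-compatibility of $\lambda_D$ from bracket compatibility together with $\lambda_D$ sending $1\otimes 1$ to the fibrewise $\mu$) and the appeal to Proposition~\ref{prop:DcohomM} for the quasi-isomorphism property — is sound.
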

Let us sketch the remaining steps necessary to obtain the
$L_\infty$-quasi-isomorphisms from $\Tpoly{E}{}(M)$ to $\Dpoly{E}{}(M)$.
As a second step, one notices that in a trivializing neighborhood
$U\subset M$ of
$E$ the connection $\conn$ on both $\FormsT{}{M}{}$ and
$\FormsD{}{M}{}$ is given by $\D_E + [B_U,\argument]$ for some element $B_U\in \FormsT{1}{M}{0}\hookrightarrow\FormsD{1}{M}{0}$. 
Thus, in this neighborhood, we have
$D = \D_E + [\Gamma,\argument]$, where $\Gamma$ is a Maurer-Cartan element.  
We now observe that
the map
\begin{equation}
  \mathscr{U}
  \colon 
  \FormsT{}{U}{}
  \longrightarrow 
  \FormsD{}{U}{}
\end{equation} 
given by applying the map $\mathscr{K}$ from
Theorem~\ref{thm:kontsevich} fiberwise
commutes with $\D_E$. 
The next step consists in twisting this map by
$\Gamma$ to obtain $L_\infty$-quasi-isomorphisms
\begin{equation}
  \mathscr{U}^\Gamma\circ\lambda_T
  \colon \Tpoly{E}{}(U)
  \longrightarrow
  \FormsD{}{U}{}.
\end{equation}
The twisting procedure is essential in our paper and will be discussed
in full detail in Section~\ref{sec:Twisting}.  By using the properties
of Kontsevich's quasi-isomorphism \eqref{eq:kontsevich} and the fact
that $\conn$ is a $\mathfrak{gl}(d,\R)$ connection we find that these
quasi-isomorphisms coincide on intersections and thus we obtain
\begin{equation}
  \mathscr{U}^\Gamma\circ\lambda_T
  \colon 
  \Tpoly{E}{}(M)\longrightarrow\FormsD{}{M}{}.
\end{equation}

\begin{remark} 
  Although it may seem that we are being sloppy with notation by
  writing $\mathscr{U}^\Gamma$, since it is not a twist a priori, it
  is still possible to consider it as a twist in the context of curved
  $L_\infty$-algebras. This construction will be discussed in the
  upcoming paper \cite{sisters:2017b}.
\end{remark}
Finally we would like to define the $L_\infty$-quasi-isomorphism $
\lambda_D^{-1}\circ\mathscr{U}^\Gamma\circ\lambda_T\colon
\Tpoly{E}{}\longrightarrow \Dpoly{E}{}.  $ One problem remains and it
is that, although $\lambda_D$ is obviously injective, we cannot be
assured that $\mathscr{U}^\Gamma\circ\lambda_T$ maps $\Tpoly{E}{}$
into the image of $\lambda_D$. However, Dolgushev
\cite[Prop.~5]{Dolgushev2005a} shows that we can always 
modify $\mathscr{U}^\Gamma\circ\lambda_T$ using a so-called
\emph{partial homotopy} to obtain a new quasi-isomorphism
$\overline{\mathscr{U}}$ which maps into the image of
$\lambda_D$. Thus we obtain the $L_\infty$-quasi-isomorphism
\begin{equation}
  \label{eq:Q}
  F_E
  :=
  \lambda_D^{-1}\circ\overline{\mathscr{U}}\colon \Tpoly{E}{}\longrightarrow \Dpoly{E}{}.
\end{equation} 
As a consequence, we obtain the formality theorem for a generic
manifold $M$ by considering the case $E = TM$ and formality for Lie
algebras by considering the case $E = \lie g$ over a point.
\begin{remark}
  \label{rem:choice}
  Note that the constructions of $D$, $\tau$ and so on are not unique,
  but they depend only on the choice of the torsion-free
  $E$-connection $\conn$.
\end{remark}

%%%%%%%%%%%%%%%%%%%%%%%%%%%%%%%%%%%%%%%%%%%%%%%%%%%%%%%%%%%%%%%%%%%%%%%%%%%%%%%%%%%%%%%%%%%%%%%%%%%%%%%%%%%%%%%%%%%%%%%%%%
%%%%%%%%%%%%%%%%%%%%%%%%%%%%%%%%%%%%%%%%%%%%%%%%%%%%%%%%%%%%%%%%%%%%%%%%%%%%%%%%%%%%%%%%%%%%%%%%%%%%%%%%%%%%%%%%%%%%%%%%%%
%%%%%%%%%%%%%%%%%%%%%%%%%%%%%%%%%%%%%%%%%%%%%%%%%%%%%%%%%%%%%%%%%%%%%%%%%%%%%%%%%%%%%%%%%%%%%%%%%%%%%%%%%%%%%%%%%%%%%%%%%%

\subsection{Twisting procedure}
\label{sec:Twisting}

In the following we recall the notions of twisting DGLA's and
$L_\infty$-morphisms by Maurer--Cartan elements.  The idea of such
twisting procedures comes from Quillen's seminal work
\cite{Quillen69}. Here we follow Dolgushev's approach as
laid out in \cite{Dolgushev2005}. As an example we show how one
obtains the local $L_\infty$-quasi-isomorphisms $\mathscr{U}^\Gamma$
mentioned above.  
\begin{lemma}
\label{lem:exppi}
  Suppose $\pi\in\mathcal{F}^1\mathfrak{L}[1]^0$, then the element
  \begin{equation}
    \exp(\pi):=\sum_{n=0}^\infty\frac{\pi^k}{k!}
  \end{equation}
  is well-defined, invertible and group-like. 
\end{lemma}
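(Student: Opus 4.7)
The plan is to verify each of the three claims in turn, all following from formal manipulation in the bialgebra $(S^c(\mathfrak{L}), \wedge, \Delta)$ of Remark~\ref{rem:bialgebra}, using that $\pi$ has shifted degree $0$ and lives in the first piece of the filtration.

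For well-definedness, I would first observe that the filtration on $\mathfrak{L}$ induces a filtration on $S^c(\mathfrak{L})$ compatible with $\wedge$, so that $\pi \in \mathcal{F}^1$ implies $\pi^n \in \mathcal{F}^n$. By completeness (Remark~\ref{rem:filtration}) the partial sums $\sum_{n \leq N}\pi^n/n!$ form a Cauchy sequence and converge.

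For invertibility, I claim $\exp(\pi)^{-1} = \exp(-\pi)$. Because $\pi$ has shifted degree $0$, the Koszul sign rule gives $\pi \wedge \pi' = \pi' \wedge \pi$ for any $\pi'$ of shifted degree $0$; in particular $\pi$ commutes with $-\pi$ in $S^c(\mathfrak{L})$. The usual formal-power-series manipulation
\begin{equation}
  \exp(\pi)\wedge\exp(-\pi) = \sum_{n=0}^\infty \frac{1}{n!}\sum_{k=0}^n \binom{n}{k}\pi^k\wedge(-\pi)^{n-k} = 1
\end{equation}
then holds, and each rearrangement is legitimate since all terms lie in a filtered-complete bialgebra.

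For the group-like property, the key point is that $\pi \in \mathfrak{L}[1] \subset S^c(\mathfrak{L})$ is primitive: directly from the explicit formula for $\Delta$ one has $\Delta(\pi) = 1 \otimes \pi + \pi \otimes 1$. Since $\Delta$ is a morphism of (filtered) algebras by Remark~\ref{rem:bialgebra}, it commutes with the convergent exponential, yielding
\begin{equation}
  \Delta(\exp(\pi)) = \exp(\Delta(\pi)) = \exp(1\otimes\pi + \pi\otimes 1).
\end{equation}
The elements $1\otimes \pi$ and $\pi \otimes 1$ both have total shifted degree $0$ and commute in $S^c(\mathfrak{L})\otimes S^c(\mathfrak{L})$, so the exponential splits as $\exp(1\otimes \pi)\wedge\exp(\pi\otimes 1) = (1\otimes\exp(\pi))\wedge(\exp(\pi)\otimes 1) = \exp(\pi)\otimes\exp(\pi)$, as required.

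The only mild obstacle is bookkeeping: one has to be confident that the Koszul signs vanish (which they do because $\pi$ has shifted degree zero) and that all infinite sums live in the complete bialgebra so that multiplicativity of $\Delta$ passes through the limit. Both follow directly from the shifted-degree-zero hypothesis together with completeness of the filtration.
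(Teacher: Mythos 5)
Your proposal is correct and follows essentially the same route as the paper's (very terse) proof: well-definedness from completeness of the filtration, invertibility via the formal identity $\exp(\pi)\wedge\exp(-\pi)=1$, and the group-like property by a direct computation with $\Delta$ --- your use of primitivity of $\pi$ together with the multiplicativity of $\Delta$ from Remark~\ref{rem:bialgebra} is just a clean packaging of the ``direct computation using the definition of $\Delta$'' the paper invokes. The only detail neither you nor the paper spells out is the counit condition $\pr_{\mathbb{K}}(\exp(\pi))=1$, which holds trivially since the degree-zero component of $\exp(\pi)$ is $1$.
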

\begin{proof} 
  $\exp(\pi)$ is well-defined, since the
  partial sums converge by virtue of $\pi$ being in the first
  filtration (the filtration is respected by $\wedge$).
  Invertibility follows from the usual direct computations showing
  that $\exp(-\pi)\exp(\pi)=1=\exp(\pi)\exp(-\pi)$.  The fact that
  $\exp(\pi)$ is group-like can similarly be deduced from a direct
  computation using the definition of $\Delta$ given in Section~\ref{sec:Linfty}.
\end{proof}
Given $\pi\in\mathcal{F}^1\mathfrak{L}[1]^0$ we define the
$\pi$-twist of the $L_\infty$-algebra $(\mathfrak{L},Q)$ as the
$L_\infty$-algebra $\mathfrak{L}^\pi$ given by the pair
$(\mathfrak{L}, Q^\pi)$ with
\begin{equation}
  Q^\pi(a)
  :=
  \exp(-\pi)\wedge Q(\exp(\pi)\wedge a).
 \end{equation}
\begin{corollary}
Suppose $(\mathfrak{L},Q)$ is an $L_\infty$-algebra and $\pi\in \mathcal{F}^1\mathfrak{L}[1]^0$, then the $\pi$-twist $(\mathfrak{L}, Q^\pi)$ is an $L_\infty$-algebra.
\end{corollary}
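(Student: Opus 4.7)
The plan is to verify that $Q^\pi$ satisfies each of the three defining properties of an $L_\infty$-structure on $\mathfrak{L}$: it must be a degree $+1$, filtration-preserving coderivation of $S^c(\mathfrak{L})$ which squares to zero.

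The key reformulation is the following. For $\alpha\in S^c(\mathfrak{L})$, let $M_\alpha\colon S^c(\mathfrak{L})\to S^c(\mathfrak{L})$ denote left multiplication by $\alpha$, so that by definition $Q^\pi = M_{\exp(-\pi)}\circ Q\circ M_{\exp(\pi)}$. By Lemma~\ref{lem:exppi}, $\exp(\pm\pi)$ are well-defined, mutually inverse, and group-like. Using the bialgebra compatibility between $\wedge$ and $\Delta$ recorded in Remark~\ref{rem:bialgebra}, group-likeness immediately yields that $M_{\exp(\pm\pi)}$ are \emph{coalgebra morphisms}, i.e. $\Delta\circ M_{\exp(\pm\pi)} = (M_{\exp(\pm\pi)}\otimes M_{\exp(\pm\pi)})\circ \Delta$. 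Moreover, these maps preserve the filtration (since $\pi\in \mathcal{F}^1$ and $\wedge$ respects the filtration) and are of degree $0$ (since $\pi$, hence every $\pi^k$, has shifted degree $0$). In short, $M_{\exp(\pm\pi)}$ form a pair of mutually inverse, filtration-preserving, degree-$0$ coalgebra automorphisms of $S^c(\mathfrak{L})$.

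From here the three properties follow quickly. \emph{Degree and filtration}: $Q^\pi$ is a composite of maps of degrees $0$, $+1$, $0$, each preserving the filtration. \emph{Nilpotency}: using $M_{\exp(\pi)}\circ M_{\exp(-\pi)} = \id$, one computes $(Q^\pi)^2 = M_{\exp(-\pi)}\circ Q^2\circ M_{\exp(\pi)} = 0$. \emph{Coderivation property}: conjugating a coderivation by a coalgebra automorphism yields a coderivation, as one verifies by applying $\Delta$ to $Q^\pi$, pushing $\Delta$ through $M_{\exp(-\pi)}$, invoking the coderivation identity for $Q$, and pushing $\Delta$ back through $M_{\exp(\pi)}$ on both tensor factors. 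The Koszul signs that would arise when commuting $M_{\exp(\pm\pi)}$ past the tensor factors are trivial because $\exp(\pm\pi)$ have shifted degree $0$.

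I do not anticipate any real obstacle here. The entire content is that group-likeness of $\exp(\pm\pi)$ turns left multiplication into an inner coalgebra automorphism, and all three required properties of an $L_\infty$-structure are manifestly stable under conjugation by such automorphisms; the only point requiring mild care is tracking the Koszul signs, and these happen to be trivial in the relevant degree.
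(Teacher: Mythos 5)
Your proof is correct and follows exactly the route the paper intends: the paper leaves this corollary without explicit proof, but its stated ingredients (Lemma~\ref{lem:exppi} and Remark~\ref{rem:bialgebra}) are precisely your observation that group-likeness and invertibility of $\exp(\pm\pi)$ make left multiplication a pair of mutually inverse, degree-$0$, filtration-preserving coalgebra automorphisms, under conjugation by which the coderivation property, the degree, and $Q^2=0$ are manifestly stable. Indeed the paper spells out this very argument when proving the analogous corollary that the $\pi$-twist $F^\pi$ of an $L_\infty$-morphism is again an $L_\infty$-morphism, so your write-up matches the authors' reasoning, with the added (correct) care about the trivial Koszul signs.
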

\begin{example} 
  Given a curved Lie algebra $(\mathfrak{L}, R,\D,
  [\argument,\argument])$ we find the twisted curved Lie algebra
  $(\mathfrak{L}, R^\pi,
  \D + [\pi,\argument],[\argument,\argument])$, where
  \begin{equation}
  \label{eq:Rpi}
    R^\pi
    :=
    R+\D\pi + \frac{1}{2}[\pi,\pi].
  \end{equation}
  Note in particular that the $\pi$-twist is flat exactly when $\pi$ satisfies the Maurer-Cartan equation.
\end{example}

\begin{proposition}
  \label{prop:MCtwist} 
  Suppose $\mathfrak{L}$ is an $L_\infty$-algebra and $\pi\in\mathcal{F}^1\mathfrak{L}[1]^0$, then the $\pi$-twist $\mathfrak{L}$ is flat if and only if $\pi$ is a Maurer-Cartan element. 
\end{proposition}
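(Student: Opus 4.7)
The plan is to show by direct computation that $Q^\pi(1)$ equals the Maurer--Cartan expression $M(\pi) := \sum_{n \geq 0} Q_n(\pi^n)/n!$, whence flatness (the condition $Q_0^\pi(1) = 0$) is tautologically equivalent to $\pi$ being a Maurer--Cartan element. The observation that makes this clean is that $Q$ acts on the group-like element $\exp(\pi)$ in a factorized fashion, which can then be unwound against $\exp(-\pi)$.

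First I would apply the characterization of coderivations (Lemma 2.4) to the symmetric power $\pi^n$. Because $\pi$ has shifted degree $0$, i.e.\ is even in $\Anti^\bullet(\mathfrak{L}[1])$, every Koszul sign in a shuffle is trivial and all $(k, n-k)$-shuffles of $n$ equal arguments contribute identically, so
\[
  Q(\pi^n)
  =
  \sum_{k=0}^{n} \binom{n}{k}\, Q_k(\pi^k) \wedge \pi^{n-k}.
\]
Dividing by $n!$, summing over $n$, and reindexing with $m = n-k$ yields the factorization
\[
  Q(\exp(\pi))
  =
  \left(\sum_{k \geq 0} \frac{Q_k(\pi^k)}{k!}\right) \wedge \exp(\pi)
  =
  M(\pi) \wedge \exp(\pi).
\]

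Next, I would use that each component $Q_k(\pi^k)$ lies in $\mathfrak{L}[2]$, which is odd in shifted grading, while $\pi$ is even; hence $M(\pi)$ graded-commutes with every power of $\pi$. Combined with $\exp(-\pi) \wedge \exp(\pi) = 1$ from Lemma~3.2, this gives
\[
  Q^\pi(1)
  =
  \exp(-\pi) \wedge Q(\exp(\pi))
  =
  M(\pi) \wedge \exp(-\pi) \wedge \exp(\pi)
  =
  M(\pi).
\]
Since $M(\pi) \in \mathfrak{L}[2] = \Anti^1(\mathfrak{L}[1])$, the coderivation formula applied to $Q^\pi(1)$ reduces to its $Q_0^\pi$-component, so $Q_0^\pi(1) = M(\pi)$. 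Flatness of $\mathfrak{L}^\pi$ is then equivalent to $M(\pi) = 0$, which is precisely the Maurer--Cartan equation \eqref{eq:MC}.

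The only real bookkeeping concern is sign management, and it is cushioned here by the fact that $\pi$ has shifted degree $0$: Koszul signs in the permutations of $\pi$'s are trivial, $M(\pi)$ passes freely through $\exp(\pm\pi)$, and the exponential identity holds without correction. No deeper obstacle is anticipated; the proof is essentially an elementary manipulation in the bialgebra $S^c(\mathfrak{L})$ from Remark~2.2, specialized to the group-like element $\exp(\pi)$.
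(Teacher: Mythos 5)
Your proof is correct and takes essentially the same approach as the paper: both compute $Q^\pi(1)$ directly and identify it with the Maurer--Cartan sum $\sum_n Q_n(\pi^n)/n!$, so that flatness is tautologically the MC equation. The only cosmetic difference is that you prove the factorization $Q(\exp(\pi)) = \left(\sum_n Q_n(\pi^n)/n!\right)\wedge\exp(\pi)$ explicitly and then cancel against $\exp(-\pi)$ --- exactly the computation the paper records separately as the proof of Lemma~\ref{lem:MCcondition} --- whereas the paper's own proof of the proposition shortcuts by invoking the coderivation property of $Q^\pi$ to argue that all terms in wedge degree $\geq 2$ must cancel.
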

\begin{proof} 
  We have 
  \begin{equation} 
    Q^\pi(1)
    =
    \exp(-\pi)\wedge Q(\exp(\pi))
    =
    \sum_{n=0}^\infty \frac{Q_n(\pi^n)}{n!}
    ,
  \end{equation}
  since all terms in $\bigoplus_{k=2}^\infty\Anti^k(\mathfrak{L}[1])$
  cancel out by virtue of the fact that
  $Q^\pi(1)=Q^\pi_0(1)\in\mathfrak{L}[1]$.
\end{proof}
\begin{example} 
  For a DGLA $(\mathfrak{L}, \D,[\argument,\argument])$ Eq.~\eqref{eq:Rpi}
  boils down to the usual Maurer--Cartan equation
  \begin{equation}
    \D\pi+\frac{1}{2}[\pi,\pi]
    =
    0.
  \end{equation}
  If we have similarly a curved Lie algebra with curvature $-R$ it
  comes down to the non-homogeneous equation
  \begin{equation}
    \D\pi+\frac{1}{2}[\pi,\pi]
    =
    R.
  \end{equation}
\end{example}
\begin{lemma}
  \label{lem:MCcondition}
  Suppose $\pi\in\mathcal{F}^1\mathfrak{L}[1]^0$, then $\pi$ is an MC
  element if and only if $Q(\exp(\pi))=0$.
\end{lemma}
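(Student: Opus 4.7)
The plan is to chain together three easy equivalences, each of which is essentially already established in the preceding material.

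First, I would recall the computation from the proof of Proposition~\ref{prop:MCtwist}, namely that
\begin{equation*}
  Q^\pi(1)
  =
  \exp(-\pi)\wedge Q(\exp(\pi))
  =
  \sum_{n=0}^\infty \frac{Q_n(\pi^n)}{n!},
\end{equation*}
where the first equality is just the definition of $Q^\pi$ evaluated at $1$ (using $\exp(\pi)\wedge 1=\exp(\pi)$), and the second equality was shown in the proof of Proposition~\ref{prop:MCtwist} by observing that $Q^\pi(1)$ must lie in $\mathfrak{L}[1]$ since $Q^\pi$ is a coderivation, so all higher-tensor-degree contributions cancel. By Definition~\ref{MCdef}, the right-hand side vanishes exactly when $\pi$ is a Maurer--Cartan element.

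Next, I would use invertibility of $\exp(\pi)$ from Lemma~\ref{lem:exppi} to pass between $Q^\pi(1)=0$ and $Q(\exp(\pi))=0$. If $Q(\exp(\pi))=0$ then immediately $Q^\pi(1)=\exp(-\pi)\wedge 0=0$. Conversely, if $\exp(-\pi)\wedge Q(\exp(\pi))=0$, then wedging on the left with $\exp(\pi)$ and using $\exp(\pi)\wedge\exp(-\pi)=1$ yields $Q(\exp(\pi))=0$.

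Chaining these two equivalences gives
\begin{equation*}
  \pi\ \text{is MC}
  \iff
  Q^\pi(1)=0
  \iff
  Q(\exp(\pi))=0,
\end{equation*}
which is the claim. There is no real obstacle here — all the work has been done in Lemma~\ref{lem:exppi} (for well-definedness and invertibility of $\exp(\pi)$) and in Proposition~\ref{prop:MCtwist} (for the identification of $Q^\pi(1)$ with the Maurer--Cartan expression). The only point requiring any care is keeping straight that $Q^\pi(1)$ is a single element of $\mathfrak{L}[1]$ (so that the MC equation can be read off directly), which follows from the fact that $Q^\pi$ is a coderivation.
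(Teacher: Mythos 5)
Your proof is correct, but it reaches the conclusion by a different route than the paper. The paper's own proof of Lemma~\ref{lem:MCcondition} is a self-contained combinatorial computation: it expands $Q(\exp(\pi))$ via the characterization of coderivations, resums the shuffle sums to get $\sum_{n}\sum_{k}\frac{1}{k!(n-k)!}Q_k(\pi^k)\wedge\pi^{n-k}$, and factors this as $\bigl(\sum_{n\geq 0}\tfrac{Q_n(\pi^n)}{n!}\bigr)\wedge\exp(\pi)$, from which the equivalence is read off (the invertibility of $\exp(\pi)$ needed for the ``only if'' direction is left implicit). You instead bootstrap from the identity $Q^\pi(1)=\exp(-\pi)\wedge Q(\exp(\pi))=\sum_n\tfrac{Q_n(\pi^n)}{n!}$ already established in the proof of Proposition~\ref{prop:MCtwist} -- which, note, is derived there not by shuffle counting but by the structural observation that $Q^\pi(1)=Q^\pi_0(1)\in\mathfrak{L}[1]$, so higher tensor-degree terms must cancel -- and then pass between $Q^\pi(1)=0$ and $Q(\exp(\pi))=0$ using invertibility of $\exp(\pi)$ from Lemma~\ref{lem:exppi}. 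Since Proposition~\ref{prop:MCtwist} precedes the lemma in the paper, there is no circularity, and your degree argument legitimately requires that $Q^\pi$ is a coderivation, which is guaranteed by the corollary that the $\pi$-twist is again an $L_\infty$-structure. What each approach buys: the paper's computation is self-contained and in fact \emph{reproves} the identity underlying Proposition~\ref{prop:MCtwist} (making the two proofs in the paper somewhat redundant), while yours avoids all combinatorics, exposes the logical dependence on the earlier proposition, and has the minor virtue of making the invertibility step in the converse direction explicit rather than tacit.
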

\begin{proof}
  The proof follows from the following equation
  \begin{align*}
    Q(\exp(\pi))
    &=
    \sum_{n=0}^\infty\sum_{k=0}^n\sum_{\sigma\in \mbox{\tiny Sh($k$,$n-k$)}}\epsilon(\sigma)\frac{1}{n!}Q_k(\pi^k)\wedge
    \pi^{n-k}
    \\
    &=
    \sum_{n=0}^\infty\sum_{k=0}^n\frac{1}{k!(n-k)!}Q_k(\pi^k)
    \wedge\pi^{n-k}
    \\
    &=
    \left(\sum_{n=0}^\infty \frac{Q_n(\pi^n)}{n!}\right)
    \wedge\exp(\pi).
  \end{align*}
\end{proof}

\begin{lemma}
  \label{lem:Fep=epF}
  Given an L$_\infty$-morphism $F$ from $\mathfrak{L}$ to
  $\mathfrak{L}'$ and an element
  $\pi\in\mathcal{F}^1\mathfrak{L}[1]^0$, we define the $F$-associated
  element $\pi_F\in\mathcal{F}^1\mathfrak{L}'[1]^0$ by the formula
  \begin{equation}
    \pi_F
    :=
    \sum_{n=1}^\infty\frac{F_n(\pi^n)}{n!}.
  \end{equation}
  We have
  \begin{equation}
    F(\exp(\pi))=\exp(\pi_F)
  \end{equation}
\end{lemma}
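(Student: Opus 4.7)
The plan is to unfold both sides using the characterization of co-algebra morphisms given by formula \eqref{eq:coalgebramorphism}, and then to recognize the resulting combinatorial sum as the exponential series of $\pi_F$. A key simplification is that $\pi$ has shifted degree zero, so every Koszul sign $\epsilon(\sigma)$ appearing when $F$ is applied to a wedge power of copies of $\pi$ equals $+1$; this trivializes the bookkeeping at the start.

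First I would apply the Taylor-coefficient formula \eqref{eq:coalgebramorphism} to $F(\pi^n)$. Since all the inputs coincide with $\pi$, for each composition $k_1+\cdots+k_p=n$ with $k_i\geq 1$ the inner shuffle sum produces $|\mathrm{Sh}(k_1,\ldots,k_p)|=\tfrac{n!}{k_1!\cdots k_p!}$ identical contributions. Dividing by $n!$, summing over $n\geq 1$ and adding $F(1)=1$ for $n=0$ then gives
\[
F(\exp(\pi))=1+\sum_{p\geq 1}\frac{1}{p!}\sum_{k_1,\ldots,k_p\geq 1}\frac{F_{k_1}(\pi^{k_1})}{k_1!}\wedge\cdots\wedge\frac{F_{k_p}(\pi^{k_p})}{k_p!}.
\]
Interchanging the outer sums and observing that the inner sum factorizes as a $p$-fold $\wedge$-power of the single series $\pi_F=\sum_{k\geq 1}F_k(\pi^k)/k!$ immediately identifies the right-hand side with $\sum_{p\geq 0}\pi_F^{\wedge p}/p!=\exp(\pi_F)$.

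The only delicate point is justifying the various infinite sums and their rearrangement. This is handled by Remark~\ref{rem:filtration}: the filtration on $\mathfrak{L}'$ is complete, $F$ preserves filtrations by Definition~\ref{def:Linftymorph}, and $\wedge$ respects the filtration as well, so from $\pi\in\mathcal{F}^1\mathfrak{L}[1]^0$ we obtain $F_k(\pi^k)\in\mathcal{F}^k\mathfrak{L}'$ and wedge products of $p$ such elements live in arbitrarily deep filtration levels as $k_1+\cdots+k_p\to\infty$. Thus all series converge in the complete metric topology and can be reordered freely, in exact parallel with the proof of Lemma~\ref{lem:exppi}. I do not expect any serious obstacle beyond keeping careful track of the multinomial coefficients; the statement is essentially a combinatorial identity packaging the fact that $F$ is a coalgebra morphism.
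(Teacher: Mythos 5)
Your proposal is correct and takes essentially the same approach as the paper, whose proof consists precisely of the remark that the claim ``follows from explicit computation using the formula \eqref{eq:coalgebramorphism}''; you simply spell out that computation (trivial Koszul signs since $\pi$ has shifted degree $0$, the multinomial count of $(k_1,\ldots,k_p)$-shuffles, and the resummation into $\exp(\pi_F)$). Your additional justification of convergence and rearrangement via the complete filtration is sound and consistent with Remark~\ref{rem:filtration} and Lemma~\ref{lem:exppi}.
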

\begin{proof} 
  It follows from explicit computation using the formula
  \eqref{eq:coalgebramorphism}.
%\[F(\sum_{n=0}^\infty\frac{\pi^n}{n!})=1+
%\sum_{n=1}^\infty\sum_{p=1}^\infty\sum_{\substack{1\leq k_1,\ldots,k_p\leq n\\ k_1+\ldots+k_p=n}}\sum_{\sigma\in\mbox{\tiny Sh($k_1,\ldots, k_p$)}}
%\frac{\epsilon(\sigma)}{n!p!}F_{k_1}(\pi^{k_1})\wedge\ldots\wedge F_{k_p}(\pi^{k_p})=\]
%\[1+\sum_{n,p=1}^\infty\sum_{\substack{1\leq k_1\leq\ldots\leq k_p\leq n\\ k_1+\ldots+k_p=n}}
%\frac{1}{k_1!\ldots k_p!}F_{k_1}(\pi^{k_1})\wedge \ldots\wedge F_{k_p}(\pi^{k_p})=\]
%\[\sum_{n=0}^\infty\frac{1}{n!}\left(\sum_{k=1}^\infty\frac{F_k(\pi^k)}{k!}\right)^n=\exp(\pi_F)\]
\end{proof}
Lemmas \ref{lem:Fep=epF} and \ref{lem:MCcondition} imply the following corollary.
\begin{corollary}
   If $\pi$ is an MC element, then $\pi_F$ is also an MC element.
\end{corollary}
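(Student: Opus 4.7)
The plan is to chain together the two preceding lemmas with the defining intertwining property of an $L_\infty$-morphism. By Lemma \ref{lem:MCcondition}, the Maurer--Cartan condition for $\pi$ is equivalent to the single equation $Q(\exp(\pi))=0$ in the coalgebra $S^c(\mathfrak{L})$, and the analogous characterization holds in $S^c(\mathfrak{L}')$ for $\pi_F$. So it suffices to show that $Q'(\exp(\pi_F))=0$, where $Q'$ denotes the codifferential on $S^c(\mathfrak{L}')$.

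First I would apply $F$ to the equation $Q(\exp(\pi))=0$, yielding $F(Q(\exp(\pi)))=0$. Since $F$ is an $L_\infty$-morphism we have $F\circ Q = Q'\circ F$ (this is the condition $FQ=\widetilde{Q}F$ from Definition \ref{def:Linftymorph}), hence $Q'(F(\exp(\pi)))=0$. Next I would invoke Lemma \ref{lem:Fep=epF}, which identifies $F(\exp(\pi))$ with $\exp(\pi_F)$; substituting gives $Q'(\exp(\pi_F))=0$. Finally, applying Lemma \ref{lem:MCcondition} in the opposite direction inside $\mathfrak{L}'$ converts this back into the Maurer--Cartan equation for $\pi_F$, which is what we wanted.

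The only thing one needs to verify to make this rigorous is that the constructions are compatible with the filtrations so that the infinite sums make sense: $F$ is filtration preserving by Definition \ref{def:Linftymorph}, so $\pi_F\in\mathcal{F}^1\mathfrak{L}'[1]^0$, which ensures that $\exp(\pi_F)$ converges in the sense of Remark \ref{rem:filtration} and that Lemma \ref{lem:exppi} applies in $\mathfrak{L}'$. There is no real obstacle here; the only ``work'' is keeping track of which coalgebra each expression lives in and noting that the coderivation/morphism identities really do commute past the infinite exponential sums, which is guaranteed by completeness of the filtration.
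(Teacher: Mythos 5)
Your proof is correct and follows exactly the route the paper intends: the paper states the corollary as an immediate consequence of Lemma \ref{lem:Fep=epF} and Lemma \ref{lem:MCcondition}, and your chain $Q(\exp(\pi))=0 \Rightarrow \widetilde{Q}(F(\exp(\pi)))=0 \Rightarrow \widetilde{Q}(\exp(\pi_F))=0$ via the intertwining relation $FQ=\widetilde{Q}F$ is precisely that argument, spelled out. Your added remark that filtration preservation guarantees $\pi_F\in\mathcal{F}^1\mathfrak{L}'[1]^0$ and hence convergence is a careful touch the paper leaves implicit.
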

Let $F\colon (\mathfrak{L},Q)\longrightarrow (\widetilde{\mathfrak{L}},\widetilde{Q})$
be an $L_\infty$-morphism and $\pi\in\mathcal{F}^1\mathfrak{L}[1]^0$. 
\begin{definition}[$\pi$-twist morphism]
  The $\pi$-twist of $F$ is a map
  \begin{equation}
    F^\pi
    \colon 
    (\mathfrak{L},Q^\pi)\longrightarrow (\widetilde{\mathfrak{L}},\widetilde{Q}^\pi)
  \end{equation}
  defined by 
  \begin{equation}
    F^\pi(a)
    :=
    \exp(\pi_F)\wedge F(\exp(\pi)\wedge a).
  \end{equation}
\end{definition}
\begin{corollary}
  The $\pi$-twist of an $L_\infty$-morphism $F$ is an
  $L_\infty$-morphism.
\end{corollary}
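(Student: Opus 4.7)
The plan is to verify the three defining properties of an $L_\infty$-morphism for $F^\pi$: it is a counital coalgebra morphism, it is degree $0$ and filtration-preserving, and it intertwines the twisted codifferentials, i.e.\ $F^\pi Q^\pi = \widetilde{Q}^\pi F^\pi$. (I read the definition as $F^\pi(a) = \exp(-\pi_F)\wedge F(\exp(\pi)\wedge a)$, the only sign that makes $F^\pi(1)=1$; Lemma~\ref{lem:Fep=epF} gives $F(\exp(\pi))=\exp(\pi_F)$, so the two factors cancel.) The basic tools are already in place: $\exp(\pm\pi)$ and $\exp(\pm\pi_F)$ are group-like of shifted degree $0$ (Lemma~\ref{lem:exppi}), $F$ itself is a coalgebra morphism intertwining $Q$ and $\widetilde{Q}$, and the concatenation $\wedge$ is associative. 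The degree and filtration claims are immediate, since $\pi\in\mathcal{F}^1\mathfrak{L}[1]^0$ forces $\pi_F\in\mathcal{F}^1\widetilde{\mathfrak{L}}[1]^0$ and each of the three operations composed to form $F^\pi$ preserves both.

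For the coalgebra morphism property, I would combine the bialgebra identity of Remark~\ref{rem:bialgebra} with the fact that a group-like element $g$ of degree $0$ satisfies $\Delta(g\wedge a)=(g\otimes g)\cdot \Delta(a)$ (no Koszul signs intrude because $|g|=0$). Applying this to $g=\exp(-\pi_F)$ on the outside and $g=\exp(\pi)$ on the inside, and sandwiching $F$ using $\Delta F = (F\otimes F)\Delta$, one gets $\Delta F^\pi(a) = (F^\pi\otimes F^\pi)\Delta(a)$ by a direct computation using Sweedler-type notation. Counitality $F^\pi(1)=1$ drops out from $\exp(-\pi_F)\wedge F(\exp(\pi)) = \exp(-\pi_F)\wedge\exp(\pi_F)=1$.

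The compatibility with codifferentials is the key calculation, but it too is essentially formal. Expanding both sides:
\begin{align*}
  F^\pi Q^\pi(a)
  &= \exp(-\pi_F)\wedge F\bigl(\exp(\pi)\wedge \exp(-\pi)\wedge Q(\exp(\pi)\wedge a)\bigr) \\
  &= \exp(-\pi_F)\wedge F\bigl(Q(\exp(\pi)\wedge a)\bigr)
  = \exp(-\pi_F)\wedge \widetilde{Q}\bigl(F(\exp(\pi)\wedge a)\bigr), \\
  \widetilde{Q}^\pi F^\pi(a)
  &= \exp(-\pi_F)\wedge \widetilde{Q}\bigl(\exp(\pi_F)\wedge \exp(-\pi_F)\wedge F(\exp(\pi)\wedge a)\bigr) \\
  &= \exp(-\pi_F)\wedge \widetilde{Q}\bigl(F(\exp(\pi)\wedge a)\bigr).
\end{align*}
Here I use associativity of $\wedge$ to cancel $\exp(\pi)\wedge\exp(-\pi)=1$ on the one side and $\exp(\pi_F)\wedge\exp(-\pi_F)=1$ on the other, and the intertwining $FQ=\widetilde{Q}F$ in the middle step. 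The only subtle point is the interaction of $F$ with left multiplication by $\exp(\pi)$: we do \emph{not} need $F(\exp(\pi)\wedge b)=\exp(\pi_F)\wedge F(b)$ (which would fail), because associativity lets us absorb $\exp(\pi)\wedge\exp(-\pi)$ to $1$ \emph{before} applying $F$. Modulo this bookkeeping, the whole statement is a direct consequence of the preceding lemmas, with no real obstacle.
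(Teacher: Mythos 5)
Your proposal is correct and follows essentially the same route as the paper's proof: there, too, the coalgebra-morphism property is obtained by observing (via Lemma~\ref{lem:exppi} and Remark~\ref{rem:bialgebra}) that multiplication by the group-like elements $\exp(\pi)$ and $\exp(-\pi_F)$ is a counital coalgebra morphism, while the relation $F^\pi Q^\pi = \widetilde{Q}^{\pi_F} F^\pi$ is merely stated to ``follow from the definitions'' --- precisely the cancellation computation you spell out. You are also right on both notational points: the defining formula must be read with $\exp(-\pi_F)$ (as printed, $\exp(\pi_F)$ would give $F^\pi(1)=\exp(2\pi_F)\neq 1$), and the target codifferential written $\widetilde{Q}^\pi$ is to be understood as the twist by $\pi_F$, consistent with Proposition~\ref{prop:twistquis}.
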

\begin{proof}
  Note that, by Lemma~\ref{lem:exppi} and Remark~\ref{rem:bialgebra},
  the operators of multiplication by $\exp(\pi)$ and $\exp(\pi_F)$ are
  co-algebra morphisms. Thus $F^\pi$ is a co-algebra morphism. The
  relation $F^\pi Q^\pi=\widetilde{Q}^\pi F^\pi$ follows from
  the definitions.
\end{proof}
\begin{remark}
  \label{comptwist}
  Given two $L_\infty$-morphisms $F$ and $G$ from $\mathfrak{L}$
  to $\mathfrak{L}'$ and $\mathfrak{L}'$ to $\mathfrak{L}''$,
  respectively, and the elements $\pi, B\in
  \mathcal{F}^1\mathfrak{L}[1]^0$, we have that
  \begin{align} 
    (Q^{\pi})^B
    &=
    Q^{\pi+B}=(Q^B)^\pi,
    \\ 
    (F^\pi)^B
    &=
    F^{\pi+B}=(F^B)^\pi,
    \\
    \pi_F+B_{F^\pi}
    &=(\pi+B)_F
    =
    B_F+\pi_{F^B},
    \\
    (\pi_F)_G
    &=
    \pi_{G\circ F}.
  \end{align}
\end{remark}
For the proof of the following proposition we refer to
\cite[Prop. 1]{Dolgushev2005}.
\begin{proposition}
  \label{prop:twistquis}
  Let $F\colon\mathfrak{L}\rightarrow\widetilde{\mathfrak{L}}$ be
  an $L_\infty$-quasi-isomorphism such that the induced morphisms
  \begin{equation}
    F|_{\mathcal{F}^k\mathfrak{L}}
    \colon 
    \mathcal{F}^k\mathfrak{L}\longrightarrow \mathcal{F}^k\widetilde{\mathfrak{L}}
  \end{equation}
  are also $L_\infty$-quasi-isomorphisms for all $k$. Suppose further
  that $\pi\in\mathcal{F}^1\mathfrak{L}[1]^0$ is an MC element. Then
  the $\pi$-twist
  \begin{equation}
    F^\pi\colon \mathfrak{L}^\pi\longrightarrow \widetilde{\mathfrak{L}}^{\pi_F}
  \end{equation}
  of $F$ is also a quasi-isomorphism. 
\end{proposition}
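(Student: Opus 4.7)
The plan is to reduce the statement to showing that the first Taylor coefficient $F^\pi_1$ is a quasi-isomorphism of the cochain complexes $(\mathfrak{L}, Q^\pi_1)$ and $(\widetilde{\mathfrak{L}}, \widetilde{Q}^{\pi_F}_1)$, and then to compare this map with the untwisted $F_1$ by means of the filtration $\mathcal{F}^\bullet$.

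First I would write out explicit formulas for the twisted Taylor coefficients. Using formula \eqref{eq:coalgebramorphism}, the group-like property of $\exp(\pi)$ and $\exp(\pi_F)$ from Lemma~\ref{lem:exppi}, and Lemma~\ref{lem:Fep=epF}, one checks that
\begin{equation}
F^\pi_1(\gamma) \;=\; \sum_{n=0}^\infty \frac{1}{n!}\, F_{n+1}(\pi^n\wedge \gamma),
\qquad
Q^\pi_1(\gamma) \;=\; \sum_{n=0}^\infty \frac{1}{n!}\, Q_{n+1}(\pi^n\wedge \gamma),
\end{equation}
and analogously for $\widetilde{Q}^{\pi_F}_1$. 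All these sums converge by Remark~\ref{rem:filtration} since $\pi$, and hence $\pi_F$, lie in filtration degree $\geq 1$.

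Next I would exploit this filtration weight. Because $\pi\in\mathcal{F}^1\mathfrak{L}$ and the filtration is respected by $\wedge$ and by the $L_\infty$-structure, every term with $n\geq 1$ in the displays above strictly raises filtration degree. Consequently, on the associated graded pieces $\mathrm{gr}_{\mathcal{F}}^k \mathfrak{L}:=\mathcal{F}^k\mathfrak{L}/\mathcal{F}^{k+1}\mathfrak{L}$, the differential induced by $Q^\pi_1$ reduces to the one induced by $Q_1$, and the chain map induced by $F^\pi_1$ reduces to the one induced by $F_1$. The same reduction holds on the target side with $\pi_F$ in place of $\pi$, since $F$ preserves filtrations. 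Using the hypothesis that each $F|_{\mathcal{F}^k\mathfrak{L}}$ is an $L_\infty$-quasi-isomorphism, the five lemma applied to the long exact sequences in cohomology associated to
\begin{equation}
0\longrightarrow \mathcal{F}^{k+1}\mathfrak{L}\longrightarrow \mathcal{F}^k\mathfrak{L}\longrightarrow \mathrm{gr}_{\mathcal{F}}^k\mathfrak{L}\longrightarrow 0
\end{equation}
then implies that $F_1$ induces a quasi-isomorphism $\mathrm{gr}_{\mathcal{F}}^k\mathfrak{L}\to \mathrm{gr}_{\mathcal{F}}^k\widetilde{\mathfrak{L}}$ for every $k$.

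Finally, I would invoke the classical filtered quasi-isomorphism lemma: a filtered chain map between complete Hausdorff filtered complexes which induces a quasi-isomorphism on each associated graded piece is itself a quasi-isomorphism. Completeness and Hausdorffness of both filtered complexes are precisely what Remark~\ref{rem:filtration} guarantees, and the previous paragraph supplies the required graded quasi-isomorphisms. The step I expect to need the most care is organizing this last convergence argument, since a naive spectral sequence can be delicate for filtrations that are not bounded below in the usual sense; however, completeness combined with the exhaustiveness $\mathcal{F}^0\mathfrak{L}=\mathfrak{L}$ rescues convergence, exactly as in Dolgushev~\cite[Prop.~1]{Dolgushev2005}, to which we refer for the technical details.
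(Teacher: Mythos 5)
Your proposal is correct and takes essentially the same route as the paper, which gives no proof of its own but defers to Dolgushev \cite[Prop.~1]{Dolgushev2005}: the cited argument is precisely this reduction via the twisted first Taylor coefficient $F^\pi_1(\gamma)=\sum_{n\geq 0}\frac{1}{n!}F_{n+1}(\pi^n\wedge\gamma)$, the observation that $\pi\in\mathcal{F}^1\mathfrak{L}[1]^0$ forces $F^\pi_1$ and $Q^\pi_1$ to agree with $F_1$ and $Q_1$ on the associated graded, the five lemma to convert the hypothesis on the restrictions $F|_{\mathcal{F}^k\mathfrak{L}}$ into graded quasi-isomorphisms, and a filtration-induction (successive approximation) argument in place of a naive spectral sequence. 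Your closing convergence caveat is resolved exactly as you indicate: exhaustiveness $\mathcal{F}^0\mathfrak{L}=\mathfrak{L}$ together with the completeness and Hausdorffness guaranteed by Remark~\ref{rem:filtration} lets one show the cone of $F^\pi_1$ is acyclic by correcting a cocycle order by order in the filtration and summing the corrections.
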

\begin{remark}
  \label{rem:thepoint}
  The proposition above says that the class of
  $L_\infty$-quasi-isomorphisms is closed under the operation of
  twisting by a Maurer--Cartan element. This provides the method of
 showing that an $L_\infty$-morphism is an $L_\infty$-quasi-isomorphism by showing
 that it is the twist of a known $L_\infty$-quasi-isomorphism.
\end{remark}
\begin{example}[Formality for $\R^d$]
  \label{ex:formalityRd}
  Here we generalize the result of Theorem \ref{thm:kontsevich} from
  $\Rf$ to $\R^d$ by providing an example of the claim in
  Remark~\ref{rem:thepoint}. From now on we set $E = TM$ and drop the
  $E$ for notational convenience. Proposition \ref{prop:twistquis}
  allows us to obtain an $L_\infty$-quasi-isomorphism witnessing the
  formality of $\Dpoly{}{}(M)$ for any manifold by twisting the formal
  quasi-isomorphism of Theorem~\ref{thm:kontsevich}.
  We set $M=\R^d$
  %although any manifold
  %allowing for a flat and torsion-free linear connection would do.
  and recall that we are looking for an $L_\infty$-quasi-isomorphism
  \begin{equation}
    \mathscr{U}^\delta
    \colon
    (\Omega(\R^d;\FibT{}),D)
    \longrightarrow 
    (\Omega(\R^d,\FibD{}),\partial+D),
  \end{equation}
  since this would complete 
  the diagram 
  \begin{equation}
  \label{eq:seqquisRd}
    (\Tpoly{}{}(\R^d),0)
    \stackrel{\lambda_T}{\longrightarrow} 
    (\Omega(\R^d;\FibT{}),D)
    \stackrel{\mathscr{U}^\delta}{\longrightarrow} 
    (\Omega(\R^d,\FibD{}),\partial+D)
    \stackrel{\lambda_D}{\longleftarrow} (\Dpoly{}{}(\R^d),\partial)
  \end{equation}
  of $L_\infty$-quasi-isomorphisms. 
  %Here we have suppressed and in the
  %future we will suppress the mention of brackets for notational
  %convenience. 
  Also, recall that $D:=-\delta+\D$. We obtain this map
  $\mathscr{U}^\delta$ as follows.
  First we note that, by applying the map $\mathscr{K}$ from 
  Theorem~\ref{thm:kontsevich} fiberwise, we obtain the $L_\infty$-morphism
  \begin{equation}
    \mathscr{U}
    \colon 
    (\Omega(\R^d;\FibT{}),\D)
    \longrightarrow 
    (\Omega(\R^d;\FibD{}),\partial+\D).
  \end{equation}
  By considering the filtrations by exterior degree on both these
  algebras we construct spectral sequences which show that
  $\mathscr{U}$ is a quasi-isomorphism. Using this same filtration we
  may consider the MC element
  $-A_{-1}\in\mathcal{F}^1\Omega(\R^d;\FibT{})$. Now note that
  $\Omega(\R^d;\FibT{})^{-A_{-1}}$ is exactly
  $(\Omega(\R^d;\FibT{}),D)$ and
  $\Omega(\R^d;\FibD{})^{(-A_{-1})_\mathscr{U}}$ is exactly
  $(\Omega(\R^d;\FibD{}),D)$, since $(-A_{-1})_{\mathscr{U}}=-A_{-1}$
  by point \refitem{thm:kontsevich2} of Theorem~\ref{thm:kontsevich}. So we obtain the
  diagram \eqref{eq:seqquisRd} by setting $\mathscr{U}^\delta:=
  \mathscr{U}^{-A_{-1}}$. This concludes the example of the claim in
  Remark~\ref{rem:thepoint}.
  In order to obtain the quasi-isomorphism 
  \begin{equation}
    \Tpoly{}{}(\R^d)\longrightarrow \Dpoly{}{}(\R^d)
  \end{equation} 
  we need to invert the final arrow of diagram \eqref{eq:seqquisRd}.
  This arrow is actually an identification (by
  DGLA-morphism) with the kernel of $D$ in exterior degree $0$.  Thus
  it can be inverted without problems if we can guarantee that the map
  $\mathscr{U}^\delta\circ\lambda_T$ maps $\Tpoly{}{}(\R^d)$ into this
  kernel.  We refer to \cite[Sect.~4.2]{Dolgushev2005a} for an
  explanation of a way to correct $\mathscr{U}^\delta$ to have this
  property.
\end{example}
% \begin{remark} 
%   Note that there is nothing special about the
%   Maurer-Cartan element $-A_{-1}$ used in the previous construction on
%   $\R^d$.  So, if we consider some arbitrary manifold $M$ with
%   corresponding Fedosov connection $D$ induced by the linear
%   connection $\conn$ then we proceed as follows.  We note that there
%   is a cover of coordinate neighborhoods $U$ such that
%   $\conn|_U=d+[B_U,\argument]$ for some $B_U$ in
%   $\FormsT{1}{U}{0}$. Thus $D|_U=d+[B_u-A_{-1}+A,\argument]$ and we
%   find that, by definition of $A$ (see lemma \ref{lem:A}), $B_U-A_{-1}+A$
%   is an MC element. Thus we obtain the local
%   $L_\infty$-quasi-isomorphism $\mathscr{U}^\Gamma$ from
%   $\FormsT{}{U}{}$ to $\FormsD{}{U}{}$, where
%   $\Gamma=B_U-A_{-1}+A$. It turns out that these morphisms agree on
%   intersections, which finishes the sketch of the proof of the
%   formality theorem.
% \end{remark}

\begin{example}[Formality for Lie algebras]
  Let us conclude this section by providing the equivalent of the
  proof of formality for the case where $M=\{\mbox{pt}\}$ is the
  connected $0$-dimensional manifold and $E$ is a $d$-dimensional Lie
  algebra $\mathfrak{g}$. The DGLA of polyvector fields is given by
  $\mathrm{CE}_\bullet(\mathfrak{g})$, the Chevalley-Eilenberg complex
  with the trivial differential. The complex of $E$-differential forms
  with values in the fiberwise polyvector fields is thus given by
  \begin{equation}
    \mathrm{CE}^\bullet(\mathfrak{g};\mathrm{CE}_\bullet(\mathfrak{g};
    \widehat{\mathcal{S}}(\mathfrak{g}^*))),
  \end{equation}
  where we have denoted $\widehat{\mathcal{S}}(\mathfrak{g}^*)=
  \prod_{k\geq 0}S^k\mathfrak{g}^*$ and the differential
  $\delta-d_E$ coincides with the usual Chevalley-Eilenberg
  differential. A linear $E$-connection $\conn$ is simply given by a
  linear map
  \begin{equation}
    \conn\colon \mathfrak{g}\otimes\mathfrak{g}\longrightarrow \mathfrak{g}.
  \end{equation}
  The corresponding map
  $\conn\colon\Anti^\bullet\mathfrak{g}^*\rightarrow \Anti^{\bullet
    +1}\mathfrak{g}^*$ is given by extending the formula
  \begin{equation}
    \conn\alpha(X, Y)=-\alpha(\conn(\frac{1}{2}(X\otimes Y-Y\otimes X)))
  \end{equation}
  from one-forms as a $\wedge$-derivation. Note that the
  $\mathfrak{g}$-differential $\D_\mathfrak{g}$ is simply given by
  $X\otimes Y\mapsto [X,Y]$. Suppose $\{e_i\}_{i=1}^d$ is a basis for
  $\mathfrak{g}$ with dual basis $\{e^i\}_{i=1}^d$. Then the
  torsion-freeness of the connection $\conn$ can be expressed  as
  $\tilde{\Gamma}_{ij}^k=\tilde{\Gamma}_{ji}^k$ in terms
  of the Christoffel symbols $\tilde{\Gamma}_{ij}^k\in \R$ defined by
  \begin{equation}
    \conn(e_i\otimes e_j)=\tilde{\Gamma}_{ij}^ke_k,
  \end{equation}
  where we have used the Einstein summation convention. Let us consider
  also the \emph{relative} Christoffel symbols
  $\Gamma_{ij}^k$ defined by
  \begin{equation}
    \conn(e_i\otimes e_j)-[e_i,e_j]=\Gamma_{ij}^ke_k,
  \end{equation}
  i.e. $\Gamma_{ij}^k=\tilde{\Gamma}_{ij}^k-\frac{1}{2}c_{ij}^k$ where
  $c_{ij}^k$ are the structure constants. In terms of these torsion-freeness is
  equivalent to the equation
  \begin{equation}
    \Gamma_{ij}^k-\Gamma_{ji}^k-c_{ij}^k=0.
  \end{equation}
  Note in particular that the connection $\D_\mathfrak{g}$ is
  \underline{not} torsion-free. The most obvious choice of
  torsion-free connection is given by
  $\Gamma_{ij}^k=\frac{1}{2}c_{ij}^k$, but we leave the choice of
  symmetric part open. Given any connection $\conn$ it is given on
  $\mathrm{CE}^\bullet(\mathfrak{g};\mathrm{CE}_\bullet(\mathfrak{g};
  \widehat{\mathcal{S}}(\mathfrak{g}^*)))$ by the formula
  \begin{equation}
    \conn=\D_\mathfrak{g}+[\Gamma_{ij}^k e^i\hat{e}^je_k,\argument]
  \end{equation}
  where we have used the hat to signify that we consider $\hat{e}^j\in
  \widehat{S}(\mathfrak{g}^*)$. Similar statements hold for
  $\Dpoly{\mathfrak{g}}{}$.
%In particular we note that the
%differential $\conn$ is obtained as the twist of $\D_\mathfrak{g}$ by 
%$\Gamma=\Gamma_{ij}^ke^i\hat{e}^je_k$ and so we find globally curved
%$L_\infty$-algebras as per Example \ref{Lieglob}. 
  Now the example proceeds identically to the previous one. 
\end{example}

%%%%%%%%%%%%%%%%%%%%%%%%%%%%%%%%%%%%%%%%%%%%%%%%%%%%%%%%%%%%%%%%%%%%%%%%%%%%%%%%%%%%%%%%%%%%%%%%%%%%%%%%%%%%%

%
%Formality and Deformation Symmetries
%

\section{Formality and Deformation Symmetries}
\label{sec:FormalityActions}

In this section we prove the main result of this paper, which 
leads to a new perspective on Drinfeld's approach to deformation
quantization. First we construct certain $L_\infty$-algebras related
to a Hopf algebra or more generally a unital bialgebra and show how
one obtains deformations from Drinfeld twists and maps into a
Hochschild cochain complex. Then we briefly recall the basic notions
of Poisson action and triangular Lie algebra.  We consider the
particular case of a Poisson action of a triangular Lie algebra $(\lie
g, r)$ on a manifold $M$ and we show that we can construct a
corresponding $L_\infty$-morphism between polydifferential operators
$\Dpoly{\mathfrak{g}}{}$ and $\Dpoly{}{}(M)$.  This morphism induces a
DGLA morphism between a quantum group associated to our Lie algebra
and a deformed algebra of smooth functions on $M$.

\subsection{Deformation Symmetries}

In the following we define the concept of a deformation
symmetry. This notion is inspired by Drinfeld's work on deformation
through quantum actions and Drinfeld twists. Let us start by
recalling the definition of Drinfeld twist. In this section we shall
fix the Hopf algebra $(H,\Delta, \epsilon, S)$ over the PID $\ring R$
containing $\Q$.

\begin{definition}[Drinfeld twist, \cite{drinfeld:1983a, drinfeld:1988a}]
  \label{def:TwistUEA}%
  An element $J\in H\otimes H$ is said to be a twist on $H$ if the
  following three conditions are satisfied.
  \begin{definitionlist}
  \item\label{def:twist1} $J$ is invertible;
  \item \label{def:twist2} 
    $(\Delta\tensor 1)(J)(J\tensor 1) = (1\tensor\Delta)(J)(1\tensor J)$
    and
  \item \label{def:twist3}
    $(\epsilon\tensor 1)J =
    (1\tensor\epsilon)J = 1$.
  \end{definitionlist}
\end{definition}
In the following we consider formal deformations. If we consider
twists in $H\hhbar$, the condition of invertibility and
``co-invertibility'' (condition \refitem{def:twist3} in the above definition) 
may be replaced by a stronger condition which is
easier to check. In fact this condition may be formulated for any Hopf
algebra equipped with a complete filtration $H = \mathcal{F}^0 H
\supset \mathcal{F}^1H\supset\ldots $.
\begin{definition}[Formal Drinfeld twist]
  \label{def:FTwistUEA}%
  Let $H$ be equipped with the complete filtration
  \\$H = \mathcal{F}^0 H \supset \mathcal{F}^1 H \supset \ldots$. Then an
  element $J \in H \otimes H$ is said to be a formal twist on $H$
  if $J$ satisfies \refitem{def:twist2} of Definition \ref{def:TwistUEA} and
  $J-1\otimes 1\in \mathcal{F}^1(H\otimes H)$.
\end{definition}
\begin{corollary}
  \label{cor:ft=t}
  A formal twist on $H$ is a twist on $H$.
\end{corollary}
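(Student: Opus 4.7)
The plan is to verify that a formal twist satisfies the remaining axioms of Definition~\ref{def:TwistUEA}, namely the invertibility condition \refitem{def:twist1} and the counit normalization \refitem{def:twist3}; the cocycle condition \refitem{def:twist2} is already built into Definition~\ref{def:FTwistUEA}.

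For invertibility, I would set $F := J - 1\otimes 1 \in \mathcal{F}^1(H\otimes H)$. Since the filtration on $H\otimes H$ is multiplicative, $F^k \in \mathcal{F}^k(H\otimes H)$, and the partial sums $S_N := \sum_{k=0}^N (-F)^k$ form a Cauchy sequence in the complete metric topology induced by the filtration. Their limit $J^{-1}$ then satisfies $J\cdot J^{-1} = J^{-1}\cdot J = 1\otimes 1$ upon passing to the limit in the telescoping identities $J\cdot S_N = 1\otimes 1 - (-F)^{N+1} = S_N\cdot J$.

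For the counit normalization, let $u := (\epsilon\otimes\id)(J)$ and $w := (\id\otimes\epsilon)(J)$, both of which lie in $1_H + \mathcal{F}^1 H$. Apply the algebra homomorphism $\epsilon\otimes\epsilon\otimes\id\colon H^{\otimes 3}\to H$ to the cocycle equation. Using the counit axioms $(\epsilon\otimes\id)\Delta = \id$ and $(\epsilon\otimes\epsilon)\Delta = \epsilon$, the left-hand side collapses to $\epsilon(u)\,u$ while the right-hand side collapses to $u^2$. Since $u$ is itself invertible by the geometric-series argument above, cancelling yields $u = \epsilon(u) \in \ring R\cdot 1_H$; a symmetric computation with $\id\otimes\epsilon\otimes\epsilon$ gives $w \in \ring R\cdot 1_H$. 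Under the standing convention that scalars of the filtered Hopf algebra sit only in filtration degree zero (so that $\ring R\cdot 1_H\cap \mathcal{F}^1 H = 0$), the congruences $u,w\equiv 1\pmod{\mathcal{F}^1 H}$ then force $u = w = 1$; more generally, the cocycle is stable under the central renormalization $J\mapsto (u^{-1/2}\otimes u^{-1/2})J$, which reduces to the normalized case without affecting condition \refitem{def:twist2}.

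The main obstacle is the final step of deducing $u = w = 1$ rather than merely that these are central scalars close to $1$; the rest of the argument is routine bookkeeping with coalgebra identities and geometric series in a complete filtered setting.
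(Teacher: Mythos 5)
Your argument is correct, and it is in substance the argument the paper leaves implicit: the paper's entire proof is the one-line assertion that the claim ``follows immediately from the compatibility of the Hopf algebra structure with the complete filtration'', so you have supplied exactly the missing content. The geometric series $J^{-1}=\sum_{k\geq 0}(-F)^k$ is surely what is intended for condition \refitem{def:twist1} (it uses only that the filtration is multiplicative and complete), and your derivation of $\epsilon(u)\,u=u^2$ by applying the algebra map $\epsilon\otimes\epsilon\otimes\id$ to the cocycle identity, hence $u=\epsilon(u)1_H$ (and symmetrically $w=\epsilon(w)1_H$, with $\epsilon(u)=\epsilon(w)=(\epsilon\otimes\epsilon)(J)$), is a clean reduction of condition \refitem{def:twist3} to the normalization of a single central scalar. (Minor bookkeeping you correctly wave at: the invertibility of $u$ needs $u-1_H\in\mathcal{F}^1H$, i.e.\ that $\epsilon\otimes\id$ respects the filtration, again part of ``compatibility''.)

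The one caveat concerns precisely the step you flag as the main obstacle, and here your diagnosis is sharper than the paper's. Your standing convention $\ring R\cdot 1_H\cap\mathcal{F}^1H=0$ fails in the paper's principal example: for $H=\UE(\lie g)\hhbar$ with the $\hbar$-adic filtration over $\ring R=\R\hhbar$ one has $\hbar 1_H\in\ring R\cdot 1_H\cap\mathcal{F}^1H$. And indeed the corollary is false there if read literally: $J=(1+\hbar)(1\otimes 1)$ satisfies \refitem{def:twist2} and $J-1\otimes 1\in\mathcal{F}^1(H\otimes H)$, yet $(\epsilon\otimes\id)(J)=(1+\hbar)1_H\neq 1_H$; it becomes a twist only after the central rescaling $J\mapsto c^{-1}J$ with $c=(\epsilon\otimes\epsilon)(J)$, which is exactly your renormalization, since $u=c\,1_H$ gives $(u^{-1/2}\otimes u^{-1/2})J=c^{-1}J$. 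So the situation is a dichotomy: either one reads ``compatibility'' as including $\epsilon(\mathcal{F}^1H)=0$ (trivially filtered scalars) --- in which case $\epsilon(u)=1$ and your proof closes immediately, your convention being an equivalent formulation --- or one works genuinely $\hbar$-adically, in which case only the rescaled twist $c^{-1}J$ satisfies \refitem{def:twist3}. In short: your proof is correct under the hypothesis you state explicitly, the ``obstacle'' you identify is a real imprecision in the corollary (and in the paper's one-line proof) rather than a gap in your argument, and your renormalization observation is the correct repair in the $\hbar$-adic setting.
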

\begin{proof} 
  This follows immediately from the compatibility of the Hopf
  algebra structure with the complete filtration.
\end{proof}
It turns out that the definition of formal twist coincides exactly
with the definition of Maurer-Cartan element on a certain DGLA that we
shall now define. The main observation is that the formulas
\eqref{eq:Pre-Lie} and \eqref{eq:Lie} for the Gerstenhaber bracket on
$\Dpoly{E}{}$ only involve the structure of a unital bialgebra.
From now on we denote 
\begin{equation}
  TH = \bigoplus_{k=0}^\infty T^k H
  \qquad
  \mbox{with}
  \qquad
  T^k H := H^{\otimes k}.
\end{equation}
For $P_1\in T^{k_1+1}H$ and $P_2\in
T^{k_2+1}H$ set
\begin{equation}
  \label{H-Pre-Lie}
  P_1\bullet P_2
  :=
  \sum_{i=0}^{k_1}
  (-1)^{ik_2}(\id^{\otimes i}\otimes\Delta^{(k_2)}\otimes 
  \id^{\otimes k_1-i})(P_1)\cdot (1^{\otimes i}\otimes P_2\otimes 1^{\otimes k_1-i})
\end{equation}
and 
\begin{equation}
  \label{H-Lie}
  [P_1,P_2]_H
  :=
  P_1\bullet P_2-(-1)^{k_1k_2}P_2\bullet P_1
\end{equation}
\begin{proposition} 
  The graded vector space $TH[1]$ equipped with the bracket
  $[\argument,\argument]_H$ is a graded Lie algebra.
\end{proposition}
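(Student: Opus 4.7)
The plan is to reduce the claim to the fact that a graded pre-Lie product automatically yields a graded Lie bracket via its graded commutator. Graded antisymmetry of $[\argument,\argument]_H$ is built into \eqref{H-Lie}, so the real content is the graded Jacobi identity.

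First I would show that the operation $\bullet$ defined in \eqref{H-Pre-Lie} endows $TH[1]$ with the structure of a graded right pre-Lie algebra, i.e.
\begin{equation*}
(P_1 \bullet P_2)\bullet P_3 - P_1\bullet(P_2 \bullet P_3)
= (-1)^{k_2 k_3}\Bigl[(P_1 \bullet P_3)\bullet P_2 - P_1 \bullet (P_3 \bullet P_2)\Bigr].
\end{equation*}
Once this is established, it is a standard fact (see e.g.\ the original argument of Gerstenhaber for Hochschild cochains) that the graded commutator of a graded pre-Lie product satisfies the graded Jacobi identity, proving the proposition.

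To verify the pre-Lie relation, I would expand both sides using \eqref{H-Pre-Lie}. Each summand of $(P_1\bullet P_2)\bullet P_3$ is indexed by a pair of ``insertion positions'' specifying where $P_2$ and then $P_3$ are placed into the tensor $P_1$. These insertions come in two flavors: the \emph{disjoint} case, where $P_3$ is placed into a factor of $P_1$ untouched by the insertion of $P_2$; and the \emph{nested} case, where $P_3$ is placed into a factor that has already been modified by the $\Delta^{(k_2)}$ applied during the $P_2$-insertion. The disjoint terms of $(P_1\bullet P_2)\bullet P_3$ match, after relabeling and sign tracking, the disjoint terms of $(-1)^{k_2 k_3}(P_1\bullet P_3)\bullet P_2$, so they cancel on the left- and right-hand sides of the claimed identity. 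Similarly the nested terms match the fully nested terms of $P_1\bullet(P_2\bullet P_3)$ and $(-1)^{k_2 k_3}P_1\bullet(P_3\bullet P_2)$. The only algebraic inputs needed to carry out this matching are the coassociativity of $\Delta$ (which lets one identify $(\id\otimes\Delta^{(k_3)})\Delta^{(k_2)}$ with an iterate $\Delta^{(k_2+k_3)}$ at the insertion site), the fact that $\Delta$ is a morphism of algebras (so one may commute a coproduct past a multiplication by $1^{\otimes i}\otimes P_3\otimes 1^{\otimes\cdot}$), and the associativity of the product on $H$.

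The main obstacle is the careful sign and indexing bookkeeping: one must check that the $(-1)^{ik_2}$ prefactors from \eqref{H-Pre-Lie}, together with the Koszul signs arising from the degree shift, conspire to give exactly the $(-1)^{k_2 k_3}$ prefactor on the right-hand side of the pre-Lie identity. This is precisely the combinatorial heart of Gerstenhaber's classical computation for Hochschild cochains; the point of the proposition is that the argument depends only on the unital bialgebra axioms rather than on any particular representation as polydifferential operators, and hence transports verbatim from the Lie--Rinehart setting of Section~\ref{sec:LocalFormality} to an arbitrary unital bialgebra $H$.
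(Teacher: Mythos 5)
Your proposal is correct and follows essentially the paper's own route: the paper proves the Jacobi identity by showing that the signed $S_3$-average of the associator of $\bullet$ vanishes, noting in Remark~\ref{rem:braces} that this vanishing follows from the graded pre-Lie identity coming from the underlying brace structure, and your disjoint/nested term matching (using coassociativity, the algebra-morphism property of $\Delta$, and associativity) is precisely a hands-on verification of that pre-Lie identity. One small remark: the form of the pre-Lie identity that actually holds for this insertion-type product is the one you state, with the associator graded-symmetric in the \emph{last} two arguments up to the sign $(-1)^{k_2k_3}$; the version printed in Remark~\ref{rem:braces} has the symmetry in the first two arguments, which appears to be a transposition of slots, though either symmetry of the associator implies the vanishing of its $S_3$-symmetrization and hence the Jacobi identity.
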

\begin{proof}

  \leavevmode

  \noindent We can immediately extend $[\argument,\argument]_H$ to
  non-homogeneous elements, since $\bullet$ can be extended by bilinearity.  Thus
  the bilinearity and anti-symmetry of $[\argument,\argument]_H$
  follow immediately from the bilinearity of $\bullet$, which follows
  in turn from the linearity of the coproduct and the bilinearity of
  the product. Finally denote the associator of $\bullet$ by $\alpha$,
  i.e.
  \begin{equation} 
    \alpha(A,B,C)=A\bullet(B\bullet C)- (A\bullet B)\bullet C.
  \end{equation}
  Then the average of $\alpha$ over the symmetric group $S_3$ is $0$, i.e 
  \begin{equation}\label{real-pre-lie}
    \sum_{\sigma\in S_3}\sigma^*\alpha =0.
  \end{equation}
  Here $S_3$ acts on $(TH[1])^{\otimes 3}$ through the usual signed
  permutation of tensor legs. The last equation is obviously
  equivalent to the Jacobi identity for $[\argument,\argument]_H$.
\end{proof}
\begin{remark}
  \label{rem:braces}
  The structure $\bullet$ on $TH[1]$ is
  actually the pre-Lie structure coming from a brace algebra
  structure.  As such the identity \eqref{real-pre-lie} can actually
  be proved by showing the pre-Lie identity
  \begin{equation}
    \alpha(A,B,C)
    =
    (-1)^{|A||B|}\alpha(B,A,C).
  \end{equation}
  The braces underlying the brace algebra structure are given by 
  \begin{align*}
    &P \langle Q_1,\ldots, Q_r\rangle 
    =
    \sum_{0\leq i_1< i_2<\ldots<i_r\leq k}(-1)^{i_1k_1+i_2k_2+\ldots+i_rk_r}
    \left(
    \id^{\otimes i_1}\otimes\Delta^{(k_1)}\otimes \id^{\otimes (i_2-i_1)}\otimes\ldots \right.
    \\
    & \left. \ldots \otimes\Delta^{(k_r)}\otimes \id^{\otimes (k-i_r)}
    \right)(P)\cdot 
    \left(1^{\otimes i_1}\otimes Q_1\otimes 1^{\otimes (i_2-i_1)}\otimes\ldots\otimes 1^{\otimes (i_{r}-i_{r-1})}\otimes Q_r\otimes 1^{\otimes (k-i_r)}\right),
  \end{align*}
  where $P\in T^{k+1}H$ and $Q_j\in T^{k_j+1}H$ for all $j$. Note that
  restricting this brace algebra structure to $H=T^1H=(TH[1])^0$ we
  find the brace algebra given in \cite[Section~6]{Aguiar}.
\end{remark}
%
%\begin{definition}
  %We define the DGLA $H^\bullet_{poly}$ as the twist of $TH[1]$ by the
  %Maurer-Cartan element $1\otimes 1$. In other words 
Let us denote the twist of $TH[1]$ by the ``Maurer-Cartan" element $1\otimes 1$ as
\begin{equation}
  \left(H^\bullet_{poly},[\argument,\argument],\partial\right)
  :=
  \left(T^{\bullet +1}H,[\argument,\argument]_H,[1\otimes1,\argument]_H\right).
\end{equation}
%\end{definition}
%
\begin{lemma}
  An element $J\in T^2H$ is a formal twist on $H$ if and only if
  $J-1\otimes 1$ is a Maurer-Cartan element in $H_{poly}$.
\end{lemma}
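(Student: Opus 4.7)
The strategy is to compute the Maurer--Cartan equation in $H_{poly}$ explicitly and recognize it as the twist cocycle condition \refitem{def:twist2}.

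First, I would identify the ambient spaces. Since $H_{poly}^\bullet = T^{\bullet+1}H$, we have $H_{poly}[1]^0 = H_{poly}^1 = T^2H$, so any Maurer--Cartan element has the form $F\in\mathcal{F}^1 T^2H$. Setting $J=1\otimes 1+F$, the filtration requirement $F\in\mathcal{F}^1(H\otimes H)$ coincides precisely with the ``formal'' condition of Definition~\ref{def:FTwistUEA}. It remains to match the algebraic conditions.

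Second, I would unpack the MC equation
\begin{equation*}
\partial F+\tfrac{1}{2}[F,F]_H=0
\end{equation*}
using \eqref{H-Pre-Lie} and \eqref{H-Lie} with $k_1=k_2=1$. A short direct computation yields
\begin{align*}
(1\otimes 1)\bullet F &= F\otimes 1 - 1\otimes F, \\
F\bullet(1\otimes 1) &= (\Delta\otimes\id)(F) - (\id\otimes\Delta)(F), \\
F\bullet F &= (\Delta\otimes\id)(F)\cdot(F\otimes 1) - (\id\otimes\Delta)(F)\cdot(1\otimes F).
\end{align*}
Because both $1\otimes 1$ and $F$ carry shifted degree $1$, the Koszul sign in the bracket \eqref{H-Lie} collapses to a plus sign, so $\partial F=[1\otimes 1,F]_H=(1\otimes 1)\bullet F+F\bullet(1\otimes 1)$ and $\tfrac{1}{2}[F,F]_H=F\bullet F$.

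Third, I would expand the left-hand side of the cocycle condition $(\Delta\otimes\id)(J)(J\otimes 1)-(\id\otimes\Delta)(J)(1\otimes J)$ after substituting $J=1\otimes 1+F$ and using $\Delta(1)=1\otimes 1$. The $1\otimes 1\otimes 1$ terms cancel, the terms linear in $F$ collect into $\partial F$, and the terms quadratic in $F$ reproduce $F\bullet F$. This yields the identity
\begin{equation*}
(\Delta\otimes\id)(J)(J\otimes 1)-(\id\otimes\Delta)(J)(1\otimes J)=\partial F+\tfrac{1}{2}[F,F]_H,
\end{equation*}
from which both directions of the equivalence follow at once. By Corollary~\ref{cor:ft=t}, the resulting $J$ is in fact a genuine Drinfeld twist in the sense of Definition~\ref{def:TwistUEA}.

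This is a bookkeeping argument with no real obstacle; the only care needed is in tracking the graded signs in \eqref{H-Pre-Lie}--\eqref{H-Lie}, and these are entirely benign since every element involved lies in odd shifted degree.
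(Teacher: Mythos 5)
Your proof is correct and takes essentially the same route as the paper's: both reduce the Maurer--Cartan equation for $F=J-1\otimes 1$ to the cocycle condition \refitem{def:twist2} via the identity $\partial F+\tfrac{1}{2}[F,F]_H=(\Delta\otimes\id)(J)(J\otimes 1)-(\id\otimes\Delta)(J)(1\otimes J)$, the paper merely packaging your termwise expansion as $\tfrac{1}{2}[J,J]_H$ using bilinearity and the symmetry of $[\argument,\argument]_H$ in odd shifted degree. Your sign bookkeeping and the identification of the filtration condition are both accurate, so nothing is missing.
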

\begin{proof} 

  \leavevmode

  \noindent Suppose first that $J\in T^2H$ is a formal twist on
  $H$. Then, by definition, $F := J-1\otimes 1$ is an element of
  $\mathcal{F}^1H_{poly}^1$ and
  \begin{equation}
    \partial F+\frac{1}{2}[F,F]_H
    =
    \frac{1}{2}[J,J]_H
    =
    (\Delta\otimes \id)(J)(J\otimes 1)-(\id\otimes \Delta)(J)(1\otimes J)=0
  \end{equation}
  by condition \refitem{def:twist2} of Definition \ref{def:TwistUEA}.  Conversely,
  suppose $F$ is a Maurer-Cartan element in $H_{poly}$, then, for $J =
  1\otimes 1 + F$,
  \begin{equation} 
    (\Delta\otimes \id)(J)(J\otimes 1)-(\id\otimes \Delta)(J)(1\otimes J)
    =
    \frac{1}{2}[J,J]_H
    =
    \partial F +\frac{1}{2}[F,F]_H
    =
    0
  \end{equation}
  by the Maurer-Cartan equation. So $J$ satisfies \refitem{def:twist2} of
  \ref{def:TwistUEA}, while clearly $J-1\otimes
  1=F\in\mathcal{F}^1(H\otimes H)$.
 \end{proof}
Drinfeld discovered \cite{drinfeld:1988a,drinfeld:1983b} that one can
twist the Hopf algebra structure on $H$ by any (formal) twist $J$. 
More explicitely, one obtains the twisted Hopf algebra $H_J$
by changing only the coproduct $\Delta$ to $\Delta_J$ given by
\begin{equation} 
  \Delta_J(X)
  =
  J^{-1}\Delta(X)J
\end{equation} 
Let us fix the (formal) twist $J$ on $H$. It
is convenient to introduce the following notation
\begin{align*}
  J_k
  &:=
  \prod_{i=1}^{k}(\Delta^{(k-i)}\otimes \id^{\otimes i})(J\otimes 1^{\otimes i-1})
  \\
  &=
  (\Delta^{(k-1)}\otimes \id)(J)\cdot(\Delta^{(k-2)}\otimes \id\otimes \id)(J\otimes 1)
  \cdot\ldots\cdot
  (\Delta\otimes \id^{\otimes k-1})(J\otimes 1^{\otimes k-2})\cdot (J\otimes 1^{\otimes k-1})
\end{align*}
and we set $J_0=1$.  Note that $J_k\in H^{\otimes k+1}$ is invertible
with $J_k^{-1}$ given by reversing
the order of terms above and replacing $J$ by $J^{-1}$.
\begin{lemma}
  \label{lem:DeltaJk}
  The iterates of $\Delta_J$ are given by the formula 
  \begin{equation} 
    \Delta_J^{(k)}(X)
    =
    J_k^{-1}\Delta^{(k)}(X)J_k.
  \end{equation}
\end{lemma}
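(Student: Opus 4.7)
My approach is induction on $k$. The base case $k=1$ is immediate: since $J_1 = J$ and $\Delta^{(1)} = \Delta$, we have $\Delta_J^{(1)}(X) = \Delta_J(X) = J^{-1}\Delta(X)J = J_1^{-1}\Delta^{(1)}(X)J_1$ directly from the definition of $\Delta_J$.

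For the inductive step, I would first establish the key recursion
\begin{equation}
  J_{k+1} = (\Delta\otimes\id^{\otimes k})(J_k)\cdot (J\otimes 1^{\otimes k}),
\end{equation}
which is verified by comparing the two products factor-by-factor. Since $\Delta\otimes\id^{\otimes k}$ is an algebra homomorphism, it distributes over the product defining $J_k$; applying it to the $i$-th factor yields
\begin{equation}
  (\Delta\otimes\id^{\otimes k})\bigl[(\Delta^{(k-i)}\otimes\id^{\otimes i})(J\otimes 1^{\otimes i-1})\bigr]
  = (\Delta^{(k+1-i)}\otimes\id^{\otimes i})(J\otimes 1^{\otimes i-1}),
\end{equation}
where the essential input is coassociativity in the form $(\Delta\otimes\id^{\otimes k-i})\circ\Delta^{(k-i)} = \Delta^{(k-i+1)}$. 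These expressions are precisely the first $k$ factors of $J_{k+1}$, while the trailing factor $J\otimes 1^{\otimes k}$ appears as the $(k+1)$-th factor of $J_{k+1}$ by definition.

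To close the induction, I would combine the identity $\Delta_J^{(k+1)} = (\Delta_J\otimes\id^{\otimes k})\circ\Delta_J^{(k)}$ with the observation
\begin{equation}
  (\Delta_J\otimes\id^{\otimes k})(Z) = (J\otimes 1^{\otimes k})^{-1}(\Delta\otimes\id^{\otimes k})(Z)(J\otimes 1^{\otimes k})
\end{equation}
for $Z \in H^{\otimes k+1}$ (which follows at once from $\Delta_J = J^{-1}\Delta(\argument)J$ applied to the first tensor leg), together with the induction hypothesis and the fact that $\Delta\otimes\id^{\otimes k}$ preserves conjugation. Using the recursion above, the resulting sandwich $[(\Delta\otimes\id^{\otimes k})(J_k)(J\otimes 1^{\otimes k})]^{-1}\Delta^{(k+1)}(X)[(\Delta\otimes\id^{\otimes k})(J_k)(J\otimes 1^{\otimes k})]$ collapses to $J_{k+1}^{-1}\Delta^{(k+1)}(X)J_{k+1}$, completing the induction. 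The only mildly subtle point is the bookkeeping in the recursion for $J_{k+1}$, namely aligning tensor positions and preserving the left-to-right order of factors under the coassociative rewriting; the rest is a mechanical manipulation. Notably, condition \refitem{def:twist2} of Definition~\ref{def:TwistUEA} plays no role here — it would only be needed to establish coassociativity of $\Delta_J$, which is not required since $\Delta_J^{(k)}$ is defined via a fixed iteration scheme inherited from $\Delta^{(k)}$.
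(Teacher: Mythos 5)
Your proof is correct: the paper itself gives no argument for this lemma beyond the words ``straightforward computation,'' and your induction --- via the recursion $J_{k+1}=(\Delta\otimes\id^{\otimes k})(J_k)\cdot(J\otimes 1^{\otimes k})$, the first-leg conjugation identity for $\Delta_J\otimes\id^{\otimes k}$, and the fact that $\Delta\otimes\id^{\otimes k}$ is a unital algebra homomorphism (hence preserves products and inverses) --- is exactly the computation being alluded to. Your closing remark is also accurate: only invertibility of $J$, coassociativity of $\Delta$, and the bialgebra axioms are used, while condition \refitem{def:twist2} of Definition~\ref{def:TwistUEA} would be needed only to make $\Delta_J$ coassociative, which the fixed first-leg iteration scheme $\Delta_J^{(k+1)}=(\Delta_J\otimes\id^{\otimes k})\circ\Delta_J^{(k)}$ never invokes.
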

\begin{proof} 
  The proof is given by straightforward computation.  
\end{proof}
We also need the following (slightly technical) lemma.
\begin{lemma}
  \label{lem:tech}
  The $J_k$'s satisfy the relation 
  \begin{equation} 
  \label{eq:tech}
    (\id^{\otimes i}\otimes \Delta^{(l)}\otimes \id^{\otimes k-i})(J_k)
    \cdot
    (1^{\otimes i}\otimes J_l\otimes 1^{\otimes k-i})
    =
    J_{k+l}
  \end{equation}
  for all $0\leq i\leq k\in\Z$ and $l\in \Z_{\geq0}$. 
\end{lemma}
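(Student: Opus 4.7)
The identity \eqref{eq:tech} is a higher-order cocycle condition that follows from the twist condition \refitem{def:twist2} by a double induction. The plan is: first, prove the $l = 1$ case for all $k \ge 0$ and $0 \le i \le k$ (the ``iterated twist identity'') by induction on $k$; then, use this as a building block to prove the general statement by induction on $l$.

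For the $l = 1$ case, the inductive step $k \to k+1$ uses the recursive identity $J_{k+1} = (\Delta^{(k)} \otimes \id)(J) \cdot (J_k \otimes 1)$ that falls out of the definition of $J_k$. When $0 \le i \le k$, the operator $\Delta$ acts within slots belonging to $\Delta^{(k)}(J^{(1)})$ and to $J_k$; coassociativity turns $(\id^{\otimes i} \otimes \Delta \otimes \id^{\otimes k+1-i})(\Delta^{(k)} \otimes \id)(J)$ into $(\Delta^{(k+1)} \otimes \id)(J)$, and the induction hypothesis together with the recursion close the step. The delicate subcase is the extremal value $i = k+1$, where a direct computation reduces the identity to
\begin{equation*}
(\Delta^{(k)} \otimes \Delta)(J) \cdot (J_k \otimes J) = J_{k+2}.
\end{equation*}
The key manoeuvre, and the main obstacle, is to rewrite $(\id \otimes \Delta)(J)$ via the twist condition as $(\Delta \otimes \id)(J) \cdot (J \otimes 1) \cdot (1 \otimes J^{-1})$ and then apply $\Delta^{(k)} \otimes \id \otimes \id$. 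After this substitution the stray $J^{-1}$ cancels against the trailing $J$ in $J_k \otimes J$ (the first $k+1$ slots of $(1^{\otimes k+1} \otimes J^{-1}) \cdot (J_k \otimes J)$ collapse to $J_k$ and the last two to $1 \otimes 1$), and two further applications of the left-handed recursion identify the result as $J_{k+2}$.

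For the induction on $l$, I would use the factorisation $\Delta^{(l+1)} = (\id^{\otimes l} \otimes \Delta) \circ \Delta^{(l)}$ so that $(\id^{\otimes i} \otimes \Delta^{(l+1)} \otimes \id^{\otimes k-i})$ equals $(\id^{\otimes i+l} \otimes \Delta \otimes \id^{\otimes k-i}) \circ (\id^{\otimes i} \otimes \Delta^{(l)} \otimes \id^{\otimes k-i})$. The induction hypothesis rewrites $(\id^{\otimes i} \otimes \Delta^{(l)} \otimes \id^{\otimes k-i})(J_k)$ as $J_{k+l} \cdot (1^{\otimes i} \otimes J_l^{-1} \otimes 1^{\otimes k-i})$, and Step~1 applied to $J_l$ with slot index $i = l$ identifies $(\id^{\otimes l} \otimes \Delta)(J_l^{-1})$ with $(1^{\otimes l} \otimes J) \cdot J_{l+1}^{-1}$. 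The resulting $J_{l+1}^{\pm 1}$ factors cancel against the $1^{\otimes i} \otimes J_{l+1} \otimes 1^{\otimes k-i}$ appearing on the right of \eqref{eq:tech}, leaving $(\id^{\otimes i+l} \otimes \Delta \otimes \id^{\otimes k-i})(J_{k+l}) \cdot (1^{\otimes i+l} \otimes J \otimes 1^{\otimes k-i})$, which is $J_{k+l+1}$ by one last application of Step~1, now with $(k, l, i)$ replaced by $(k+l, 1, i+l)$. Everything outside the extremal subcase of Step~1 is routine bookkeeping with coassociativity and the componentwise commutation of tensor factors of disjoint support.
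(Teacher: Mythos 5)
Your proof is correct, but it takes a genuinely different route from the paper's. The paper seeds its double induction with the family $k=1$, $i=1$, proving $(\id\otimes \Delta^{(l)})(J)\cdot(1\otimes J_l)=J_{l+1}$ by induction on $l$ directly from condition \refitem{def:twist2} of Definition~\ref{def:TwistUEA}, and then inducts on $k$, peeling off the first tensor leg via the right-handed recursion $J_k=(\id\otimes\Delta^{(k-1)})(J)\cdot(1\otimes J_{k-1})$ (which is itself the seed case); notably, the paper's computation is entirely inverse-free. You transpose the induction: seed at $l=1$ for all $0\leq i\leq k$ by induction on $k$, then induct on $l$ — and both of your delicate steps lean essentially on invertibility. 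I checked the two places where your argument could break and they hold: in the extremal subcase $i=k+1$ of Step~1, the reduction to $(\Delta^{(k)}\otimes\Delta)(J)\cdot(J_k\otimes J)=J_{k+2}$ is right, the substitution $(\id\otimes\Delta)(J)=(\Delta\otimes\id)(J)\cdot(J\otimes 1)\cdot(1\otimes J^{-1})$ followed by the disjoint-support commutation does cancel the stray $J^{-1}$ against the trailing $J$, and two applications of $J_{m+1}=(\Delta^{(m)}\otimes\id)(J)\cdot(J_m\otimes 1)$ close it; in Step~2, the identity $(\id^{\otimes l}\otimes\Delta)(J_l^{-1})=(1^{\otimes l}\otimes J)\cdot J_{l+1}^{-1}$ follows from Step~1 at $(k,i)=(l,l)$ because $(\id^{\otimes l}\otimes\Delta)$ is an algebra map, and the final appeal to Step~1 at $(k+l,1,i+l)$ is legitimate since $0\leq i+l\leq k+l$. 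The trade-offs: your use of $J^{-1}$, $J_l^{-1}$ is harmless here since formal twists are invertible (Corollary~\ref{cor:ft=t}), but it means your argument does not extend to non-invertible cocycle elements, whereas the paper's does verbatim; on the other hand, your induction treats all slot positions $0\leq i\leq k$ uniformly, while the paper's displayed $k\geq 2$ step only literally covers $1\leq i\leq k$ (its second line involves $\id^{\otimes(i-1)}$, so the $i=0$ case silently requires the mirror-image argument with the left-handed recursion) — a small gap your organization avoids.
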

\begin{proof}
  For $k=i=0$, the formula \eqref{eq:tech} reads
  \begin{equation} 
    \Delta^{(l)}(1)\cdot J_l
    =
    J_l
  \end{equation}
  which is obviously satisfied. For $k=1$ and $i=0$ we get
  \begin{equation} 
    (\Delta^{(l)}\otimes \id)(J)\cdot(J_l\otimes 1)
    =
    J_{l+1},
  \end{equation}
  which follows immediately from the definition of $J_{l+1}$. For $k=1$, $i=1$ and $l=0$ we find 
  \begin{equation}
  J\cdot(1\otimes 1)=J.
  \end{equation}
  We will fully establish the $k=1$ case by induction now. So suppose the formula \ref{eq:tech} holds for $k=1$, $i=1$, $l-1\in \Z_{\geq0}$. Then 
  \begin{align*}
    ((\id\otimes \Delta^{(l)})(J)\cdot(1\otimes J_l)
    &= (\id\otimes \Delta^{(l-1)}\otimes\id)\left((\id\otimes \Delta)(J)\cdot(1\otimes J)\right)\cdot(1\otimes J_{l-1}\otimes 1)
    \\
    &=
    (\id\otimes\Delta^{(l-1)}\otimes \id)\left((\Delta\otimes \id)(J)\cdot
    (J\otimes 1)\right)\cdot(1\otimes J_{l-1}\otimes 1)
    \\
    &=
    (\Delta^{(l)}\otimes \id)(J)\cdot(\id\otimes \Delta^{(l-1)}\otimes \id)(J\otimes 1)\cdot (1\otimes J_{l-1}\otimes 1)
    \\
    &=
    (\Delta^{(l)}\otimes \id)(J)\cdot\left(((\id\otimes \Delta^{(l-1)})(J)\cdot(1\otimes J_{l-1}))\otimes 1\right)
    \\
    &=(\Delta^{(l)}\otimes \id)(J)\cdot (J_l\otimes 1)=J_{l+1}.
  \end{align*}
  Thus we have established the $k=0$ and $k=1$ cases completely. 
  %Thus we can establish the $k=1$ case completely by induction if we
  %establish 
 To establish the $k\geq 2$ cases, we proceed by induction. Suppose that the formula \eqref{eq:tech} is
  satisfied for all triples $(\kappa,i,l)\in(\Z_{\geq 0})^3$ where $i\leq \kappa$ and $\kappa<k$. Then we
  have
  \begin{align*} 
    (\id^{\otimes i}\otimes \Delta^{(l)}\otimes \id^{\otimes k-i})(J_k)&\cdot (1^{\otimes i}\otimes J_l\otimes 1^{\otimes k-i})
    =
    (\id^{\otimes i}\otimes \Delta^{(l)}\otimes \id^{k-i})\left((\id\otimes \Delta^{(k-1)})(J)\cdot(1\otimes J_{k-1})\right)\cdot\\ 
    &\hspace{0.4cm}\cdot(1^{\otimes i}\otimes J_l\otimes 1^{\otimes k-i})
    \\
    &=
    (\id^{\otimes i}\otimes \Delta^{(l)}\otimes\id^{\otimes k-i})((\id\otimes \Delta^{(k-1)})(J))\cdot
    \\
    &\hspace{0.4cm}\cdot(1\otimes \left((\id^{\otimes (i-1)}\otimes \Delta^{(l)}\otimes \id^{\otimes k-i})(J_{k-1})\right)\cdot(1^{\otimes i-1}\otimes J_l\otimes 1^{k-i}))
    \\
    &=
    (\id^{\otimes i}\otimes \Delta^{(l)}\otimes \id^{\otimes k-i})((\id\otimes \Delta^{(k-1)})(J))\cdot(1\otimes J_{k+l-1})
    \\
    &=(\id\otimes \Delta^{(k+l-1)})(J)\cdot(1\otimes J_{k+l-1})
    \\
    &=
    J_{k+l}.
  \end{align*}
  Thus, Eq.~\eqref{eq:tech} is satisfied for all triples $(k,i,l)$. 
\end{proof}
\begin{proposition}
  \label{prop:twistJ}
  The map $\mathscr{J}\colon (H_J)_{poly}\longrightarrow
  (H_{poly})^{J-1\otimes 1}$ given by $\mathscr{J}(P)=J_k\cdot P$ for
  $P\in (H_J)_{poly}^k$ is a DGLA isomorphism.
\end{proposition}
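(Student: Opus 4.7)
The plan is to verify the three requirements of a DGLA isomorphism separately: bijectivity of the underlying linear map, compatibility with the graded Lie bracket, and compatibility with the differential. Since the differential on each side is the inner derivation by a distinguished element, the last point will be a corollary of bracket compatibility, so the heart of the proof is the bracket calculation.

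Bijectivity is immediate from Lemma~\ref{lem:tech} (or rather the definition): each $J_k$ is invertible in $H^{\otimes k+1}$ with explicit inverse, so the degreewise inverse of $\mathscr{J}$ is given by $P \mapsto J_k^{-1} \cdot P$. It is also clear that $\mathscr{J}$ preserves the filtration. For bracket compatibility it suffices, by bilinearity and the definition of $[\argument,\argument]_H$ and $[\argument,\argument]_{H_J}$, to prove the stronger statement that $\mathscr{J}$ intertwines the pre-Lie products, i.e.
\begin{equation}
\mathscr{J}(P_1\bullet_J P_2)=\mathscr{J}(P_1)\bullet_H \mathscr{J}(P_2)
\end{equation}
for $P_i\in T^{k_i+1}H$, where $\bullet_J$ uses $\Delta_J$ and $\bullet_H$ uses $\Delta$.

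The computation proceeds in four steps. First, expand the right-hand side with formula~\eqref{H-Pre-Lie} applied to $J_{k_1}\cdot P_1$ and $J_{k_2}\cdot P_2$, and use multiplicativity of $\Delta^{(k_2)}$ to factor
\begin{equation}
(\id^{\otimes i}\otimes \Delta^{(k_2)}\otimes \id^{\otimes k_1-i})(J_{k_1}\cdot P_1)=(\id^{\otimes i}\otimes \Delta^{(k_2)}\otimes \id^{\otimes k_1-i})(J_{k_1})\cdot(\id^{\otimes i}\otimes \Delta^{(k_2)}\otimes \id^{\otimes k_1-i})(P_1).
\end{equation}
Second, slot by slot, use Lemma~\ref{lem:DeltaJk}, in the form $\Delta^{(k_2)}(x)\cdot J_{k_2}=J_{k_2}\cdot \Delta_J^{(k_2)}(x)$, to move the factor $1^{\otimes i}\otimes J_{k_2}\otimes 1^{\otimes k_1-i}$ to the left of $(\id^{\otimes i}\otimes \Delta^{(k_2)}\otimes \id^{\otimes k_1-i})(P_1)$, at the cost of replacing $\Delta^{(k_2)}$ by $\Delta_J^{(k_2)}$ on $P_1$. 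Third, apply Lemma~\ref{lem:tech} to collapse the remaining product of $(\id^{\otimes i}\otimes \Delta^{(k_2)}\otimes \id^{\otimes k_1-i})(J_{k_1})$ and $1^{\otimes i}\otimes J_{k_2}\otimes 1^{\otimes k_1-i}$ into the single factor $J_{k_1+k_2}$, which can be pulled outside the sum over $i$. What remains inside the sum is exactly $P_1\bullet_J P_2$, so the result is $J_{k_1+k_2}\cdot(P_1\bullet_J P_2)=\mathscr{J}(P_1\bullet_J P_2)$.

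Finally, differential compatibility is automatic: the differential on $(H_J)_{poly}$ is $[1\otimes 1,\argument]_{H_J}$, while the differential on $(H_{poly})^{J-1\otimes1}$ is $[1\otimes 1+(J-1\otimes 1),\argument]_H=[J,\argument]_H$, and we have $\mathscr{J}(1\otimes 1)=J_1\cdot(1\otimes 1)=J$, so the bracket formula just established gives $\mathscr{J}([1\otimes 1,P]_{H_J})=[J,\mathscr{J}(P)]_H$. The main obstacle is the second step: the need to commute $J_{k_2}$ past $(\id^{\otimes i}\otimes \Delta^{(k_2)}\otimes \id^{\otimes k_1-i})(P_1)$, which only affects the $i$-th tensor slot and is precisely where the twisting identity $\Delta_J^{(k_2)}=J_{k_2}^{-1}\Delta^{(k_2)}(\argument)J_{k_2}$ enters; handling this requires the explicit tensor-slot bookkeeping described above, combined with Lemma~\ref{lem:tech} to repackage everything into $J_{k_1+k_2}$.
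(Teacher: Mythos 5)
Your proof is correct and takes essentially the same route as the paper: the paper's own proof likewise notes that $P\mapsto J_k^{-1}P$ inverts $\mathscr{J}$, that $\mathscr{J}(1\otimes 1)=J$ reduces compatibility with the differentials to compatibility with the brackets, and that the latter ``follows by direct computation from Lemmas~\ref{lem:DeltaJk} and~\ref{lem:tech}''. Your four-step calculation (multiplicativity of $\Delta^{(k_2)}$, the slot-wise identity $\Delta^{(k_2)}(x)J_{k_2}=J_{k_2}\Delta_J^{(k_2)}(x)$ from Lemma~\ref{lem:DeltaJk}, and the collapse to $J_{k_1+k_2}$ via Lemma~\ref{lem:tech}) is precisely the direct computation the paper leaves to the reader, carried out correctly.
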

\begin{proof} 
  First we note that $\mathscr{J}^{-1}(P)=J_k^{-1}P$ for $P\in
  H_{poly}^k$ is obviously an inverse of $\mathscr{J}$. Furthermore we
  observe that $\mathscr{J}(1\otimes 1) = J$, which shows that we only need
  to check that $\mathscr{J}$ preserves the brackets. This follows by
  direct computation from Lemmas \ref{lem:DeltaJk} and \ref{lem:tech}
\end{proof}
\begin{definition}[Deformation Symmetry]
  \label{Defsym}
  Let $\algebra A$ be a unital associative algebra over $\ring R$
  equipped with a complete filtration $\algebra A =\mathcal{F}^0
  \algebra A\supset\mathcal{F}^1 \algebra A\supset\ldots$ and consider
  the Hochschild DGLA structure on the complex $C^\bullet(\algebra
  A) = C^\bullet(\algebra A; \algebra A)[1]$. A deformation symmetry of
  $H$ in $\algebra A$ is a map
  \begin{equation} 
    \Phi
    \colon 
    H_{poly}\longrightarrow C(\algebra A)
  \end{equation}
  of $L_\infty$-algebras. 
\end{definition}
The previous proposition implies the following claim. 
\begin{corollary}
  Any formal twist on $H$ produces
  deformations of all algebras $\algebra A$ equipped with a
  deformation symmetry of $H$.
\end{corollary}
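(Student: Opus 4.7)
The plan is to translate the formal twist into a Maurer--Cartan element in the target Hochschild DGLA via the deformation symmetry, and then invoke the standard dictionary between Maurer--Cartan elements of the Hochschild complex and associative deformations of $\algebra A$. More precisely, given a formal twist $J$ on $H$, I would set $F := J - 1\tensor 1$ and note, by the lemma just established, that $F$ is a Maurer--Cartan element of $H_{poly}$, lying in $\mathcal{F}^1 H_{poly}^1$ by the definition of a formal twist.

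Next, I would feed $F$ through the $L_\infty$-morphism $\Phi\colon H_{poly}\to C(\algebra A)$ provided by the deformation symmetry. By the corollary to Lemma \ref{lem:Fep=epF}, the element
\begin{equation}
F_\Phi := \sum_{n=1}^\infty \frac{\Phi_n(F^n)}{n!}
\end{equation}
is a Maurer--Cartan element in $C(\algebra A)$. Convergence of this sum and the fact that $F_\Phi\in \mathcal{F}^1 C(\algebra A)^1$ are ensured because $\Phi$ is filtration-preserving (cf. Definition \ref{def:Linftymorph}) and $F\in\mathcal{F}^1 H_{poly}^1$.

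Finally, I would appeal to the classical correspondence underlying the very definition of the Hochschild DGLA used here: a Maurer--Cartan element $B\in C(\algebra A)^1 = \Hom(\algebra A\tensor\algebra A,\algebra A)$ encodes a bilinear map $B$ whose sum with the original product $m_{\algebra A}$ is associative, precisely because the Hochschild Maurer--Cartan equation $\partial B + \tfrac{1}{2}[B,B] = 0$ is equivalent to associativity of $m_{\algebra A} + B$. The filtration condition $F_\Phi\in\mathcal{F}^1$ then ensures that this is an honest formal deformation (a perturbation of the undeformed product). Applying this to $B = F_\Phi$ yields the desired deformation $\algebra A_J$ of $\algebra A$.

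There is no real obstacle here: the whole statement is a formal consequence of the Maurer--Cartan calculus developed in Section~\ref{sec:Linfty} together with the identification of formal twists with Maurer--Cartan elements of $H_{poly}$ and of associative deformations with Maurer--Cartan elements of $C(\algebra A)$. The only point requiring any care is bookkeeping of the filtrations to guarantee that the sum defining $F_\Phi$ converges and that the result indeed sits in degree one of the first filtration level, so that it defines a genuine formal deformation rather than just a deformed multiplication map.
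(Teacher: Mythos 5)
Your argument is correct and is essentially the paper's own (implicit) proof: the paper leaves the corollary as an immediate consequence of the preceding results, and the intended chain is exactly yours — identify the formal twist $J$ with the Maurer--Cartan element $F=J-1\otimes 1\in\mathcal{F}^1H_{poly}^1$, push it forward along the deformation symmetry $\Phi$ to the Maurer--Cartan element $F_\Phi\in\mathcal{F}^1C(\algebra A)^1$ via the corollary to Lemmas~\ref{lem:Fep=epF} and~\ref{lem:MCcondition}, and read off the associative deformation $m_{\algebra A}+F_\Phi$ from the standard equivalence between the Hochschild Maurer--Cartan equation and associativity (the same dictionary the paper uses again in Proposition~\ref{defHoch}). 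Your attention to the filtration bookkeeping (convergence of $F_\Phi$ and its membership in $\mathcal{F}^1$) is exactly the point the paper's Definitions~\ref{def:Linftymorph} and~\ref{Defsym} are set up to guarantee, so nothing is missing.
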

The notion of deformation symmetry is a generalization of the
standard notion of universal deformation via Drinfeld twist, see e.g.
\cite{bieliavsky.gayral:2015a,ESW2016,giaquinto.zhang:1998a}. Universal deformation formula 
relies on the notion of Hopf
algebra action that we recall in the following definition.
\begin{definition}[Hopf algebra action]
  Let $\algebra A$ be as in Definition \ref{Defsym}. Then an action of
  the Hopf algebra $H$ on $\algebra A$ is defined as a map
  \begin{equation}
    \phi\colon H\otimes \algebra A\longrightarrow \algebra A
  \end{equation}
  such that 
  \begin{align}
    \phi\circ(\id_H\otimes \mu) 
    &= 
    \mu\circ (\phi\otimes\phi)
    \circ
    (\id_H\otimes \tau_{H\otimes \algebra A}\otimes\id_{\algebra A})
    \circ 
    (\Delta\otimes\id_{\algebra A\otimes \algebra A}), 
    \\
    \phi\circ(\mu_H\otimes \id_{\algebra A})
    &=
    \phi\circ(\id_H\otimes \phi)
    \\
    \phi\circ(\eta_H\otimes \id_{\algebra A})
    &=
    \mu_{\algebra A}\circ(\eta_{\algebra A}\otimes \id_{\algebra A})
    \\
    \phi\circ(\id_H\otimes \eta_{\algebra A}) &= \eta_{\algebra A}\circ\mu_{\ring R}\circ(\epsilon\otimes \id_{\ring R}).
  \end{align}
  Here $\mu$ denotes the multiplication of $\algebra A$, $\mu_{\ring R}$
  denotes the multiplication of $\ring R$, $\mu_H$ denotes the
  multiplication of $H$, $\eta_{\algebra A}\colon \ring R\rightarrow
  A$ denotes the unit of $A$, $\eta_H\colon \ring R\rightarrow H$
  denotes the unit of $H$ and $\tau_{H\otimes A}$ denotes the flip
  $h\otimes a\mapsto a\otimes h$.
\end{definition}
Notice that $\phi$ can be regarded as a map $H\rightarrow \End_{\ring
  R}(A) = C^1(A;A)$ satisfying certain conditions.
\begin{proposition}
  \label{actionimpliesdefsym}
  Given a Hopf algebra action $\phi$ of $H$ on $A$, the map $\Phi$
  defined by
  \begin{equation}
    h_0\otimes \ldots \otimes h_k\mapsto \mu_{\algebra A}^{(k)}\circ(\phi(h_0)\otimes
    \phi(h_1)\otimes \ldots\otimes \phi(h_k))
  \end{equation}
  is a deformation symmetry. Here $\mu_{\algebra A}^{(k)}$ denotes the
  $k$-th iteration $\mu_{\algebra A}\circ(\id\otimes \mu_{\algebra
    A})\circ\ldots\circ(\id^{\otimes k-1}\otimes \mu_{\algebra A})$ of
  $\mu_{\algebra A}$.
\end{proposition}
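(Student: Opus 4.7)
The plan is to observe that both source and target are flat DGLAs, so an $L_\infty$-morphism whose only nonzero Taylor coefficient is $\Phi_1=\Phi$ amounts precisely to a morphism of DGLAs. I will therefore verify that the given $\Phi$ is a chain map and a morphism of graded Lie algebras. In fact, I would prove the sharper statement that $\Phi$ is a morphism of (pre-Lie, and even brace) algebras, from which compatibility with the Gerstenhaber bracket and with the Hochschild differential will follow mechanically.

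The first observation is that the unit axiom of a Hopf algebra action forces $\phi(1_H) = \mathrm{id}_{\algebra A}$, and hence $\Phi(1\otimes 1)(a_0,a_1)=\phi(1)(a_0)\cdot\phi(1)(a_1)=\mu_{\algebra A}(a_0,a_1)$. So $\Phi$ sends the Maurer--Cartan element $1\otimes 1$ that generates the differential on $H_{poly}$ to the multiplication $\mu_{\algebra A}$ that generates the Hochschild differential on $C(\algebra A)$. Once we know $\Phi$ respects $[\,\cdot\,,\,\cdot\,]$, this identity immediately gives $\Phi\circ\partial=\partial_{\mathrm{Hoch}}\circ\Phi$.

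The heart of the proof is the pre-Lie compatibility $\Phi(P\bullet Q)=\Phi(P)\bullet_G\Phi(Q)$, where $\bullet_G$ is the Gerstenhaber product on the Hochschild complex. Fix $P=h_0\otimes\cdots\otimes h_{k_1}\in T^{k_1+1}H$ and $Q=g_0\otimes\cdots\otimes g_{k_2}\in T^{k_2+1}H$. Applying the definition of $\bullet$ and writing Sweedler notation $\Delta^{(k_2)}(h_i)=\sum h_{i,(1)}\otimes\cdots\otimes h_{i,(k_2+1)}$, the $i$-th term of $\Phi(P\bullet Q)$ evaluated on $(a_0,\dots,a_{k_1+k_2})$ produces a product in $\algebra A$ containing the factor
\begin{equation}
\prod_{j=0}^{k_2}\phi\bigl(h_{i,(j+1)}g_j\bigr)(a_{i+j})
=\prod_{j=0}^{k_2}\phi(h_{i,(j+1)})\bigl(\phi(g_j)(a_{i+j})\bigr).
\end{equation}
Now the module-algebra axiom for the Hopf action $\phi$, applied iteratively, lets us collapse this to
\begin{equation}
\phi(h_i)\bigl(\phi(g_0)(a_i)\cdots\phi(g_{k_2})(a_{i+k_2})\bigr)=\phi(h_i)\bigl(\Phi(Q)(a_i,\dots,a_{i+k_2})\bigr).
\end{equation}
Reassembling with the signs $(-1)^{ik_2}$ from \eqref{H-Pre-Lie} gives exactly the Gerstenhaber pre-Lie formula $\Phi(P)\bullet_G\Phi(Q)$. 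Antisymmetrizing yields $\Phi[P,Q]_H=[\Phi(P),\Phi(Q)]_G$, and the chain map property follows from the case $P=1\otimes 1$.

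The main technical obstacle is bookkeeping: keeping track of Sweedler indices, the shift $(i,j)\mapsto i+j$ of the $a$'s, and the signs on both sides while iterating the module-algebra axiom over $k_2+1$ tensor factors. A clean way to handle this is to prove by induction on $k_2$ the auxiliary identity
\begin{equation}
\mu_{\algebra A}^{(k_2)}\bigl((\phi\otimes\cdots\otimes\phi)\circ\Delta^{(k_2)}(h)\bigr)=\phi(h)\circ\mu_{\algebra A}^{(k_2)},
\end{equation}
which is nothing but the iterated module-algebra axiom, and then substitute it uniformly into each summand of $\Phi(P\bullet Q)$. With this lemma in hand, the preservation of $\bullet$ (and more generally of the full brace structure described in Remark \ref{rem:braces}, by the same argument applied to several inserted arguments simultaneously) becomes a matter of matching indices, with no further subtlety.
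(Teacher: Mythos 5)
Your proof is correct and takes essentially the same route as the paper: both establish the pre-Lie compatibility $\Phi(P\bullet Q)=\Phi(P)\{\Phi(Q)\}$ by iterating the module-algebra axiom on each Sweedler leg of $\Delta^{(l)}(P_i)$ (your auxiliary identity $\mu_{\algebra A}^{(l)}\circ\phi^{\otimes l+1}\circ\Delta^{(l)}=\phi(\argument)\circ\mu_{\algebra A}^{(l)}$ is exactly the step the paper performs inline in its displayed computation), then antisymmetrize and use $\Phi(1\otimes 1)=\mu_{\algebra A}$ to get compatibility with the differentials, concluding that $\Phi$ is a DGLA morphism and hence an $L_\infty$-morphism with sole Taylor coefficient $\Phi_1=\Phi$. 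Even your closing observation that the argument upgrades to a full brace-algebra morphism is the content of the remark the paper places immediately after this proposition.
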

\begin{proof} 

  \leavevmode

  \noindent We prove this proposition by showing that $\Phi$ is a
  map of DGLA's.  Note that for $P=P_0\otimes \ldots\otimes
  P_k$, $Q=Q_0\otimes \ldots\otimes Q_l$ and $a=a_0\otimes
  \ldots\otimes a_{k+l}$ we have
  \begin{align*}
    \Phi(P\bullet Q)(a)
    &=
    \Phi\left(\sum_{i=0}^k(-1)^{il}P_0\otimes \ldots\otimes P_{i-1}\otimes \Delta^{(l)}(P_i)Q\otimes P_{i+1}\otimes \ldots\otimes P_k\right)(a)
    \\
    &=
    \sum_{i=0}^k(-1)^{il}
    \phi(P_0)(a_0)\cdot\ldots\cdot
    \phi(P^{(0)}_iQ_0)(a_i)\cdot\ldots\cdot\phi(P^{(l)}_iQ_l)(a_{i+l})\cdot
    \\
    &\qquad\cdot
    \phi(P_{i+1})(a_{i+1+l})\cdot\ldots\cdot\phi(P_k)(a_{k+l})
    \\
    &=
    \sum_{i=0}^k(-1)^{il}\phi(P_0)(a_0)
    \cdot\ldots\cdot
    \phi(P_i)(\phi(Q_0)(a_i)\cdot\ldots\cdot\phi(Q_l)(a_{l+i}))\cdot
    \\
    &\qquad\cdot \phi(P_{i+1}(a_{l+i+1})\cdot\ldots\cdot\phi(P_k)(a_{l+k})
    \\
    &=
    \sum_{i=0}^k(-1)^{il}\phi(P_0)(a_0)
    \cdot\ldots\cdot
    \phi(P_{i-1})(a_{i-1})\cdot\phi(P_i)(\Phi(Q)(a_{i}\otimes\ldots\otimes a_{i+l})\cdot
    \\
    &\qquad\cdot \phi(P_{i+1})(a_{l+i+1})\cdot \ldots\cdot\phi(P_{k})(a_{k+l})
    \\
    &=
    \sum_{i=0}^k(-1)^{il}\Phi(P)(a_0\otimes 
    \ldots\otimes a_{i-1}\otimes \Phi(Q)(a_i\otimes\ldots\otimes a_{i+l})\otimes a_{i+l+1}\otimes \ldots\otimes a_{k+l})
    \\
    &=
    \Phi(P)\{\Phi(Q)\}(a)
  \end{align*}
  where we have used the \emph{brace} notation, see e.g.
  \cite{GerstVoron1995}. Note that
  $[A,B]_G=A\{B\}-(-1)^{|A||B|}B\{A\}$. In short the above computation 
  boils down to
  \begin{equation}
    \Phi(P\bullet Q)
    =
    \Phi(P)\{\Phi(Q)\}.
  \end{equation} 
  Thus $\Phi$ respects the brackets and observing that $\Phi(1\otimes
  1)=\mu_{\algebra A}$ the proposition is proved.
\end{proof}
\begin{remark}
  Recall that $TH[1]$ is endowed with a brace algebra structure, as described in
  Remark~\ref{rem:braces}.  The above proposition actually follows
  from the fact that a Hopf algebra action $\phi$ of $H$ on
  $\mathscr{A}$ actually induces a brace algebra morphism
  \begin{equation}
    \Phi\colon TH[1]\longrightarrow C^\bullet(\mathscr{A}),
  \end{equation}
  where the braces on $C^\bullet(\mathscr{A})$ are defined as usual by 
  \begin{align*}
    P\{Q_1,\ldots, Q_r\}&(a_0\otimes \ldots\otimes a_{k+k_1+\ldots+k_r})
    =
    \sum_{0\leq i_1<i_2<\ldots<i_r\leq k}(-1)^{i_1k_1+i_2k_2+\ldots+i_rk_r}
    P(a_0\otimes \ldots\otimes 
    \\
    &\otimes
    Q_1(a_{i_1}\otimes 
    \ldots \otimes a_{i_1+k_1})\otimes a_{i_1+k_1+1}\otimes\ldots 
    \otimes a_{i_2+k_1-1}\otimes 
    Q_2(a_{i_2+k_1}\otimes \ldots\otimes a_{i_2+k_1+k_2})\otimes 
    \\
    &\ldots\otimes 
    Q_r(a_{i_r+k_1+\ldots+k_{r-1}}\otimes a_{i_r+k_1+\ldots+k_r})\otimes \ldots \otimes a_{k+k_1+\ldots+k_r}),
  \end{align*}
  for $P\in C^{k+1}(\mathscr{A};\mathscr{A})$, $Q_j\in
  C^{k_j+1}(\mathscr{A};\mathscr{A})$ for all $j$ and $a_i\in
  \mathscr{A}$ for all $i$. So we have
  \begin{equation} 
    \Phi(P\langle Q_1,\ldots, Q_r\rangle)
    =
    \Phi(P)\{\Phi(Q_1),\ldots, \Phi(Q_r)\}
  \end{equation}
  for all $P, Q_1,\ldots, Q_r\in TH[1]$. 
\end{remark}
%

% 
%Twisting the infinitesimal Poisson action
%
\subsection{Twisting Poisson actions}

A Lie bialgebra is a pair $(\lie g, \gamma)$ where $\lie g$ is a Lie
algebra and $\gamma$ is a $1$-cocycle, $ \gamma \colon \lie g \to \lie
g\wedge \lie g$.  In this paper we consider a particular class of Lie
bialgebras.  Recall that an element $r \in \lie g\wedge \lie g$ is
called $r$-matrix if it satisfies the Maurer--Cartan equation
$\Schouten{r, r}= 0$.  It can be proved that $r$-matrices always
induce a Lie bialgebra structure on $\lie g$, by setting $\gamma =
\Schouten{r, \argument }$. We refer to the pair $(\lie g, r)$ as
\emph{triangular Lie algebra}. For further details on Lie bialgebras
we refer to \cite{Kosmann-Schwarzbach2004}.
\vspace{0.3cm}

Let us consider a Lie algebra action $\varphi : \lie{g} \to
\Secinfty(TM)$.
\begin{definition}[Poisson action]
  The action $\varphi$ is Poisson if it satisfies
  \begin{equation}
    \D_\pi \varphi(X)
    =
    \varphi \wedge \varphi \circ \gamma (X)
  \end{equation}
  where $\D_\pi = \Schouten{\pi , \argument}$. 
\end{definition}
\begin{proposition}
  \label{prop:PoissonR}
  Let $\lie{g}$ be a finite dimensional triangular Lie algebra and
  $\varphi \colon \lie{g} \to \Secinfty(TM)$ a Lie algebra action.
  \begin{propositionlist}
  \item The bitensor $\pi$ defined as the image of $r$ via $\varphi$
    is a Poisson tensor.
  \item $\varphi$ is a Poisson action wrt $\pi = \varphi \wedge
    \varphi (r)$.
  \end{propositionlist}
\end{proposition}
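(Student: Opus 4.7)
The plan is to reduce both statements to a single algebraic fact, namely that the Lie algebra morphism $\varphi\colon\lie g\to\Secinfty(TM)$ extends canonically to a morphism of graded Lie algebras
\begin{equation}
  \tilde\varphi
  \colon
  \bigl(\Tpoly{\lie g}{}=\Anti^\bullet\lie g,\Schouten{\argument,\argument}\bigr)
  \longrightarrow
  \bigl(\Tpoly{}{}(M)=\Secinfty(\Anti^\bullet TM),\Schouten{\argument,\argument}\bigr),
\end{equation}
obtained by extending $\varphi$ as a wedge morphism and then using the Leibniz rule to define the bracket. This is the extension already invoked in the introduction of the paper, and it amounts to a routine check that the Schouten-style rules on both sides are determined by the Lie bracket on the generators.

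First I would verify (or cite) that $\tilde\varphi$ preserves the Schouten bracket. This is purely algebraic: both brackets are characterized by (a) agreeing with the Lie bracket on $\lie g$ resp.\ $\Secinfty(TM)$, (b) the identity $\Schouten{f,\argument}=0$ resp.\ the identity on functions, and (c) the graded Leibniz rule for $\wedge$. Since $\varphi$ is a morphism of Lie algebras and the extension is built by the same universal formula on both sides, $\tilde\varphi$ intertwines the Schouten brackets.

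Granted this, part (i) is a one-line consequence: since $r\in\lie g\wedge\lie g$ satisfies $\Schouten{r,r}=0$ and $\pi=\tilde\varphi(r)=\varphi\wedge\varphi(r)$, we get
\begin{equation}
  \Schouten{\pi,\pi}
  =
  \Schouten{\tilde\varphi(r),\tilde\varphi(r)}
  =
  \tilde\varphi\bigl(\Schouten{r,r}\bigr)
  =
  0,
\end{equation}
so $\pi$ is Poisson. For part (ii), I use $\gamma(X)=\Schouten{r,X}$ (by definition of the cobracket from the $r$-matrix) and compute
\begin{equation}
  \D_\pi\varphi(X)
  =
  \Schouten{\pi,\varphi(X)}
  =
  \Schouten{\tilde\varphi(r),\tilde\varphi(X)}
  =
  \tilde\varphi\bigl(\Schouten{r,X}\bigr)
  =
  \varphi\wedge\varphi\circ\gamma(X),
\end{equation}
which is exactly the Poisson action condition.

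The only genuine content is the bracket-preservation property of $\tilde\varphi$, and since it follows from the universal Leibniz construction, I do not expect a real obstacle. The rest is bookkeeping with the Schouten bracket.
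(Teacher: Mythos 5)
Your proposal is correct and takes essentially the same route as the paper: the paper's proof likewise rests on $\varphi$ being a Lie algebra morphism, so that the extension $\wedge^\bullet\varphi$ (exactly the map the paper invokes in \eqref{eq:PoissonActionExt}) intertwines the Schouten brackets, yielding $\Schouten{\pi,\pi}=\wedge^\bullet\varphi\left(\Schouten{r,r}\right)=0$. Your displayed calculation for part \textit{ii.)} simply makes explicit what the paper dismisses as ``a straightforward computation.''
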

\begin{proof}
  Let us consider the $r$-matrix $r = \frac{1}{2} \sum_{i,j} r^{ij}
  e_i \wedge e_j$ and define
  \begin{equation}
    \label{eq:PoissonR}
    \pi
    :=
    \frac{1}{2} \sum_{i,j} r^{ij} \varphi(e_i) \wedge \varphi(e_j).
  \end{equation}
  From $\Schouten{r, r} = 0$, using the fact that $\varphi$ is a Lie
  algebra morphism, it follows that $\Schouten{\pi, \pi} = 0$. The second
  claim is a straightforward computation.
\end{proof}
%
% We can regard Poisson actions in terms of Gerstenhaber
% algebras.  A morphism of differential Gerstenhaber
% algebras is a cochain map that respects the wedge product and the
% graded Lie bracket. 
It is easy to see that 
%$(\wedge^{\bullet}\lie g
%,\delta, \wedge, [\argument ,\argument])$ and $(\Secinfty
%(\wedge^{\bullet}TM), \D_\pi, \wedge, \Schouten{\argument , \argument}
%)$ are strong differential Gerstenhaber algebras and 
the notion of Poisson action can be extended to a morphism of
DGLA's
\begin{equation}
  \label{eq:PoissonActionExt}
  \wedge^{\bullet}\varphi
  : 
  (\wedge^{\bullet}\lie g ,\delta, [\argument ,\argument])
  \longrightarrow 
  (\Secinfty (\wedge^{\bullet} TM), \D_\pi, \Schouten{\argument , \argument}).
\end{equation}

We now show how one may use the formality of $\Dpoly{\mathfrak{g}}{}$
and $\Dpoly{}{}(M)$ to obtain a deformation symmetry of
$\mathscr{U}(\mathfrak{g})$ in $\Cinfty(M)$ given an infinitesimal
action $\varphi$ of $\mathfrak{g}$ on $M$. Thus, given an r-matrix of
$\mathfrak{g}$, we obtain deformations of all relevant structures and
we comment on these. In Section~\ref{sec:formality} we have obtained
the \emph{formal} DGLA's $\Dpoly{\mathfrak{g}}{}$ and $\Dpoly{}{}(M)$.
The classical $r$-matrix $r\in\Tpoly{\mathfrak{g}}{1}$ yields a
Maurer-Cartan element. Although it satisfies the Maurer-Cartan
equation $\Schouten{r,r}=0$ it is in fact not an MC element according
to our Definition~\ref{MCdef} as we have neglected to consider any
filtration. The filtration is needed since we go through formality,
which means we encounter infinite sums. More precisely, we obtain a
filtration by considering the formal power series ring $\R\hhbar$. 
Given a DGLA $\mathfrak{L}$ we denote the DGLA obtained by
extending scalars to the formal power series ring by
$\mathfrak{L}\hhbar$.  We consider these DGLA's as filtered by the
degree in $\hbar$. Note that, given $L_\infty$-morphisms of DGLA's
$\mathfrak{L}\rightarrow \mathfrak{L}'$ we obtain also
$L_\infty$-morphisms of the extended DGLA's
$\mathfrak{L}\hhbar\rightarrow\mathfrak{L}'\hhbar$.  In this way
$L_\infty$-quasi-isomorphism go to $L_\infty$-quasi-isomorphisms.

From Section~\ref{sec:formality} it follows that we
have a \emph{horse-shoe} diagram in the category of
$L_\infty$-algebras:
\begin{equation}
  \label{horse-shoe}
  \begin{tikzcd}[row sep=3em,column sep=4em]
    & \Tpoly{\mathfrak{g}}{}\hhbar
    \arrow[r, "\varphi"]\arrow[d, swap, "F_{\mathfrak{g}}"] 
    & \Tpoly{}{}(M)\hhbar \arrow[d,  "F_M"]
    \\
    & \Dpoly{\mathfrak{g}}{}\hhbar
    & \Dpoly{}{}(M)\hhbar.
  \end{tikzcd}
\end{equation}
Here the vertical maps $F_\mathfrak{g}$ and $F_M$ are
$L_\infty$-quasi-isomorphisms constructed as discussed in
Section~\ref{sec:formality} and the horizontal arrow is induced by
$\varphi$ as in \eqref{eq:PoissonActionExt}.
%\begin{equation}
%	X_1\wedge\ldots\wedge X_k\mapsto \varphi(X_1)\wedge\ldots\wedge\varphi(X_k).
%\end{equation}
%
Thus we obtain, given an r-matrix $r\in\mathfrak{g}\wedge\mathfrak{g}$, the MC elements:
\begin{equation}
  \begin{array}{l}
    \hbar r\in \Tpoly{\mathfrak{g}}{}\hhbar;
    \\
    \hbar\pi
    =
    (\hbar r)_\varphi\in \Tpoly{}{}(M)\hhbar;
    \\
    \rho_\hbar
    :=
    (\hbar r)_{F_\mathfrak{g}}\in \Dpoly{\mathfrak{g}}{}\hhbar
    \\
    B_\hbar
    :=
    (\hbar \pi)_{F_M}\in \Tpoly{\mathfrak{g}}{}\hhbar.
  \end{array}
\end{equation}

\begin{remark} 
  \label{rem:WrongCandidate}
  Notice that we obtain the deformed versions of $\Cinfty(M)$ and
  $\mathscr{U}(\mathfrak{g})$ by twisting by the Maurer-Cartan
  elements listed above. We would like then to obtain a deformation
  symmetry completing the square in the horse-shoe diagram above, such
  that it commutes.  Commutativity ensures that the two formal
  deformations of $\Cinfty(M)$ induced by $\hbar r$ by transporting it
  along the bottom or the top to $\Dpoly{}{}(M)\hhbar$ coincide. An
  obvious candidate for such a map would be
  \begin{equation}
    X_1\otimes\ldots\otimes X_k
    \mapsto 
    \varphi(X_1)\otimes\ldots\otimes\varphi(X_k),
  \end{equation}
  i.e. the map induced by the map of Lie-Rinehart pairs
  $(\mathfrak{g}, \R)\rightarrow (\Secinfty(TM),\Cinfty(M))$.  In
  other terms the deformation symmetry induced by the obvious Hopf
  algebra action of $\mathscr{U} (\mathfrak{g})$ on $\Cinfty(M)$
  through proposition \ref{actionimpliesdefsym}. However this map may
  not make the diagram commute. In fact it is the opinion of the
  authors that such commutation would involve some condition of
  compatibility of the connections used in defining these maps, see
  Remark~\ref{rem:choice}.
\end{remark}
We complete the horse-shoe diagram \eqref{horse-shoe} to a commuting
square by observing that $L_\infty$-quasi-isomorphisms are
\emph{invertible}. The following lemma is essentially contained in
\cite[Chapt.~10.4]{AO}. We remind the reader that the field underlying
$L_\infty$-algebras is of characteristic $0$.
\begin{lemma}
  \label{invert}
  Suppose $\mathfrak{L}$ and $\mathfrak{L}'$ are formal
  $L_\infty$-algebras and $f\colon H(\mathfrak{L})\rightarrow
  H(\mathfrak{L'})$ is an $L_\infty$-morphism, then there exists a
  lift $\tilde{f}\colon \mathfrak{L}\rightarrow \mathfrak{L}'$ of
  $f$. In other words there exists a commuting diagram
  \begin{equation}
    \begin{tikzcd}[row sep=3em,column sep=4em]
      & H(\mathfrak{L})
      \arrow[r, "f"]\arrow[d, swap, "i_l"] 
      & H(\mathfrak{L}') \arrow[d,  "i_r"]
      \\
      & \mathfrak{L} \arrow[r, swap, "\tilde{f}"] 
      & \mathfrak{L}'.
    \end{tikzcd}
  \end{equation}
  in the category of $L_\infty$-algebras such that the vertical arrows
  are quasi-isomorphisms.
\end{lemma}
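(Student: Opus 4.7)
The plan is to leverage the homotopy-invertibility of $L_\infty$-quasi-isomorphisms. Formality of $\mathfrak{L}$ supplies an $L_\infty$-quasi-isomorphism $i_l\colon H(\mathfrak{L})\to\mathfrak{L}$, and formality of $\mathfrak{L}'$ supplies $i_r\colon H(\mathfrak{L}')\to\mathfrak{L}'$. First I would invoke the classical inverse-morphism theorem for $L_\infty$-quasi-isomorphisms (referenced as \cite[Chapt.~10.4]{AO} in the text) to obtain a quasi-inverse $i_l^{-1}\colon \mathfrak{L}\to H(\mathfrak{L})$, itself an $L_\infty$-quasi-isomorphism, satisfying $i_l^{-1}\circ i_l\simeq \id_{H(\mathfrak{L})}$ and $i_l\circ i_l^{-1}\simeq \id_{\mathfrak{L}}$ up to $L_\infty$-homotopy.

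Once $i_l^{-1}$ is at hand, the desired lift is the composition
\begin{equation}
  \tilde{f} := i_r\circ f\circ i_l^{-1}\colon \mathfrak{L}\longrightarrow \mathfrak{L}',
\end{equation}
which is an $L_\infty$-morphism since the class of $L_\infty$-morphisms is closed under composition. The diagram then commutes in the homotopy category of $L_\infty$-algebras, since
\begin{equation}
  \tilde{f}\circ i_l \;=\; i_r\circ f\circ i_l^{-1}\circ i_l \;\simeq\; i_r\circ f,
\end{equation}
and composition preserves $L_\infty$-homotopy. That both vertical arrows are quasi-isomorphisms is built in by the formality hypothesis.

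The main obstacle is therefore the construction of $i_l^{-1}$, which I would carry out by induction on the order of Taylor coefficients. The first component $(i_l^{-1})_1$ is chosen to be a chain-level quasi-inverse of $(i_l)_1$: since $H(\mathfrak{L})$ carries the zero differential and $(i_l)_1$ is a quasi-isomorphism of complexes, choosing a splitting of the induced isomorphism on cohomology together with a contracting homotopy $h$ for the acyclic part of $\mathfrak{L}$ furnishes such a map. At each higher order $n$, the equation that determines $(i_l^{-1})_n$ is a coboundary equation for a certain cocycle built from the lower Taylor coefficients and the $L_\infty$-structures; its obstruction class vanishes in $H(\mathfrak{L})$ precisely because $(i_l)_1$ is a quasi-isomorphism, and the homotopy $h$ provides the required primitive. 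Completeness of the filtrations (Remark~\ref{rem:filtration}), which is respected by $i_l$, ensures that the successive corrections land in progressively deeper filtration strata and that the inductive construction converges to a well-defined $L_\infty$-morphism $i_l^{-1}$. Once this is established, the remaining verifications that $\tilde{f}$ is an $L_\infty$-morphism and makes the square commute up to homotopy are formal.
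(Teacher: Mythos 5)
Your construction of the quasi-inverse $i_l^{-1}$ and the definition $\tilde{f} := i_r\circ f\circ i_l^{-1}$ reproduce exactly the first half of the paper's proof (where the quasi-inverse is called $l_i$), and your inductive, obstruction-theoretic construction of $i_l^{-1}$ is sound as far as it goes. But there is a genuine gap: the lemma asserts a diagram that commutes \emph{in the category of $L_\infty$-algebras}, i.e.\ strictly, whereas your argument only delivers $\tilde{f}\circ i_l \simeq i_r\circ f$ up to $L_\infty$-homotopy. The paper itself flags precisely this point: after defining $\tilde{f}=i_r\circ f\circ l_i$ it notes that this ``already proves that $\tilde{f}\circ i_l=i_r\circ f$ in cohomology (and thus `up to homotopy')'' and that the stronger statement requires examining the construction of $l_i$. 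The strictness is not cosmetic here. In Corollary~\ref{twistaction} the lemma is used so that the Maurer--Cartan element $\hbar r$ transported along the two sides of the square yields the \emph{same} element of $\Dpoly{}{}(M)\hhbar$, hence literally the same twisted differential and the same deformation; homotopy commutativity would only give gauge-equivalent Maurer--Cartan elements, which is a strictly weaker conclusion than the one the paper goes on to exploit.

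The paper closes the gap by using the freedom in the quantifier: the vertical arrows are not fixed in advance, only required to be quasi-isomorphisms. Via the homotopy transfer theorem and the invertibility of $\infty$-isomorphisms (morphisms whose first Taylor coefficient is an isomorphism, \cite[Sect.~10.4]{AO}), one splits $\mathfrak{L}\simeq H(\mathfrak{L})\oplus K_{\mathfrak{L}}$ and $\mathfrak{L}'\simeq H(\mathfrak{L}')\oplus K_{\mathfrak{L}'}$ by $\infty$-isomorphisms, with $K$ acyclic and carrying trivial higher operations; formality guarantees that the transferred structure on $H(\mathfrak{L})$ agrees with the canonically induced one up to $\infty$-isomorphism. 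Defining $\tilde{f}$ through these splittings as $f$ on the $H$-summand makes the square commute on the nose for the vertical quasi-isomorphisms induced by the splittings. To repair your proof you would need either this splitting argument or an explicit inductive strictification of your homotopy, neither of which follows from the bare existence of a quasi-inverse $i_l^{-1}$.
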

\begin{proof}
  Note that we start by hypothesis with the horse-shoe
  \begin{equation}
    \begin{tikzcd}[row sep=3em,column sep=4em]
      & H(\mathfrak{L})
      \arrow[r, "f"]\arrow[d, swap, "i_l"] 
      & H(\mathfrak{L}') \arrow[d,  "i_r"]
      \\
      & \mathfrak{L}
      & \mathfrak{L'}.
    \end{tikzcd}
  \end{equation}
  in the category of $L_\infty$-algebras such that the vertical arrows
  are quasi-isomorphisms. As shown in \cite[Sect.~10.4.4]{AO} we can
  always find a quasi-inverse $l_i\colon\mathfrak{L}\rightarrow
  H(\mathfrak{L})$ of $i_l$ such that the induced maps in cohomology
  are inverse to each other. \ We define $\tilde{f}:=i_r\circ f\circ
  l_i$. Note that this already proves that $\tilde{f}\circ
  i_l=i_r\circ f$ in cohomology (and thus ``up to homotopy''). To get
  the stronger statement in the lemma we will need to consider the
  construction of the map $l_i$. This construction involves the notion
  of a so-called $\infty$-isomorphism. An $\infty$-isomorphism is a
  morphism of $L_\infty$-algebras $F$ such that $F_1$ is an
  \emph{isomorphism}. The main observations for the construction of
  $l_i$ are two-fold. First, the homotopy transfer theorem
  \cite[Sect.~10.3]{AO} yields an $L_\infty$-structure on
  $H(\mathfrak{L})$ which is unique up to $\infty$-isomorphism. It is
  obtained by picking a retraction of $\mathfrak{L}$ onto
  $H(\mathfrak{L})$, we may pick the retraction given by
  $i_l$. Secondly in section 10.4.2 of \cite{AO} it is shown that any
  $L_\infty$-algebra $\mathfrak{L}$ is $\infty$-isomorphic to the sum
  $H(\mathfrak{L})\oplus K$ where $K$ is an acyclic chain complex
  (with trivial $Q_0$, $Q_2$, $Q_3$ and so on). Now the construction
  of $l_i$ follows by the fact that $\infty$-isomorphisms are
  invertible (shown in section 10.4.1 of \cite{AO}). Our lemma follows
  from the fact that, for a formal $L_\infty$-algebra the
  $L_\infty$-structure induced on $H(\mathfrak{L})$ by homotopy
  transfer equals the canonically induced structure up to
  $\infty$-isomorphism. Thus we may simply consider the splittings
  $\mathfrak{L}\simeq H(\mathfrak{L})\oplus K_{\mathfrak{L}}$ and
  $\mathfrak{L}'\simeq H(\mathfrak{L}')\oplus K_{\mathfrak{L}'}$,
  where $\simeq$ means $\infty$-isomorphism, given by the retractions
  induced by $i_l$ and $i_r$. Then the map $\tilde{f}$ simply maps
  $H(\mathfrak{L})$ to $H(\mathfrak{L}')$ by $f$.
\end{proof}
\begin{corollary}
\label{twistaction}  The diagrams 
  \begin{equation}
    \label{square}
    \begin{tikzcd}[row sep=3em,column sep=4em]
      & \Tpoly{\mathfrak{g}}{}\hhbar
      \arrow[r, "\varphi"]\arrow[d, swap, "F_{\mathfrak{g}}"] 
      & \Tpoly{}{}(M)\hhbar \arrow[d,  "F_M"]
      \\
      & \Dpoly{\mathfrak{g}}{}\hhbar \arrow[r, "\tilde{\varphi}"]
      & \Dpoly{}{}(M)\hhbar
    \end{tikzcd}
  \end{equation}
  and
  \begin{equation}
    \label{twistsquare}
    \begin{tikzcd}[row sep=3em,column sep=4em]
      & \Tpoly{\mathfrak{g}}{\hbar r}\hhbar
      \arrow[r, "\varphi^{\hbar r}"]\arrow[d, swap, "F^{\hbar r}_{\mathfrak{g}}"] 
      & \Tpoly{}{\hbar\pi}(M)\hhbar \arrow[d,  "F^{\hbar\pi}_M"]
      \\
      & \Dpoly{\mathfrak{g}}{\rho_\hbar}\hhbar \arrow[r, "\tilde{\varphi}^{\rho_\hbar}"]
      & \Dpoly{}{B_\hbar}(M)\hhbar
    \end{tikzcd}
  \end{equation}
  commute.
\end{corollary}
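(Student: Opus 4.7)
The plan is to first construct $\tilde{\varphi}$ and obtain commutativity of the untwisted square \eqref{square} via Lemma \ref{invert}, and then to deduce commutativity of the twisted square \eqref{twistsquare} from the functoriality of the twisting procedure with respect to composition of $L_\infty$-morphisms.

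For the first step, I would apply Lemma \ref{invert} with $\mathfrak{L} = \Dpoly{\mathfrak{g}}{}\hhbar$, $\mathfrak{L}' = \Dpoly{}{}(M)\hhbar$, quasi-isomorphisms $i_l = F_\mathfrak{g}$ and $i_r = F_M$, and $f = \varphi$ viewed as the DGLA morphism $\Tpoly{\mathfrak{g}}{}\hhbar \to \Tpoly{}{}(M)\hhbar$ from \eqref{eq:PoissonActionExt}. Both $\Dpoly{\mathfrak{g}}{}\hhbar$ and $\Dpoly{}{}(M)\hhbar$ are formal by the formality theorems of Section \ref{sec:formality}, and their cohomologies, as $L_\infty$-algebras with the canonically induced structure, coincide with $\Tpoly{\mathfrak{g}}{}\hhbar$ and $\Tpoly{}{}(M)\hhbar$ with their Schouten DGLA structures (the Hochschild--Kostant--Rosenberg identification underlying Theorem \ref{thm:FedRes}). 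Lemma \ref{invert} then produces an $L_\infty$-morphism $\tilde{\varphi}\colon \Dpoly{\mathfrak{g}}{}\hhbar \to \Dpoly{}{}(M)\hhbar$ satisfying $\tilde{\varphi}\circ F_\mathfrak{g} = F_M \circ \varphi$, which is precisely commutativity of \eqref{square}.

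For the twisted diagram, the key observation is that twisting is functorial with respect to composition: for any $L_\infty$-morphisms $F, G$ and any MC element $\pi$ one has $(G\circ F)^\pi = G^{\pi_F}\circ F^\pi$. This follows by unwinding the definition of the twist and invoking $\pi_{G\circ F} = (\pi_F)_G$ from Remark \ref{comptwist}. Applying this identity to both sides of $\tilde{\varphi}\circ F_\mathfrak{g} = F_M \circ \varphi$, twisted by the MC element $\hbar r \in \mathcal{F}^1\Tpoly{\mathfrak{g}}{}\hhbar$, gives
\begin{equation}
\tilde{\varphi}^{(\hbar r)_{F_\mathfrak{g}}} \circ F_\mathfrak{g}^{\hbar r} = F_M^{(\hbar r)_\varphi} \circ \varphi^{\hbar r}.
\end{equation}
With the identifications $(\hbar r)_{F_\mathfrak{g}} = \rho_\hbar$ and $(\hbar r)_\varphi = \hbar\pi$ (and consequently $(\hbar\pi)_{F_M} = B_\hbar$ by one further application of $(\pi_F)_G = \pi_{G\circ F}$), this is precisely the commutativity of \eqref{twistsquare}.

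The main obstacle in the plan is the invocation of Lemma \ref{invert}: producing a genuine lift (as opposed to one defined only up to homotopy) relies on the nontrivial theory of $\infty$-isomorphisms, retractions and homotopy transfer recalled in its proof, and depends crucially on the formality of both target DGLAs. Once the untwisted square is established as a strict equality of $L_\infty$-morphisms, the twisted square commutes by a purely formal manipulation of the identities in Remark \ref{comptwist}.
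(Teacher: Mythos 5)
Your proposal is correct and follows essentially the same route as the paper's proof: Lemma~\ref{invert}, applied to the horse-shoe \eqref{horse-shoe} with $i_l = F_\mathfrak{g}$, $i_r = F_M$ and $f = \varphi$, yields the strict lift $\tilde{\varphi}$ and hence \eqref{square}, while \eqref{twistsquare} follows from the twisting procedure of Section~\ref{sec:Twisting} via Remark~\ref{comptwist}. Your explicit composition identity $(G\circ F)^\pi = G^{\pi_F}\circ F^\pi$, together with $(\pi_F)_G = \pi_{G\circ F}$, is precisely the content behind the paper's terse appeal to that remark, so you have merely spelled out what the published proof leaves implicit.
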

\begin{proof}
  Applying the above lemma to our situation we immediately find that
  the diagram \eqref{square} commutes. Also, by applying the results
  of Section~\ref{sec:Twisting} we obtain diagram \eqref{twistsquare},
  which commutes thanks to Remark~\ref{comptwist}.
\end{proof}

\subsection{Twisted structures}

In the following we show that the twisted complexes obtained above are
coming from a formal deformation quantization of $\Cinfty(M)$ (in the
case of $\Dpoly{}{B_\hbar}(M)$) and a deformation of
$\mathscr{U}(\mathfrak{g})$ into a quantum group (in the case of
$\Dpoly{\mathfrak{g}}{\rho_\hbar}$).
\begin{proposition}
  \label{defHoch}
  There is a formal deformation quantization $\mathscr{A}_\hbar$ of
  $(\Cinfty(M),\pi)$ such that
  \begin{equation}
    \Dpoly{}{B_\hbar}(M)
    \hookrightarrow 
    C^\bullet(\algebra{A}_\hbar,\algebra{A}_\hbar),
  \end{equation}
  i.e. $\Dpoly{}{B_\hbar}(M)$ is a subcomplex of the Hochschild
  cochain complex of $\algebra{A}_\hbar$.
\end{proposition}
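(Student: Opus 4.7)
The plan is to extract an associative star product $\mu_\hbar$ directly from the Maurer--Cartan element $B_\hbar$, set $\algebra A_\hbar := (\Cinfty(M)\hhbar, \mu_\hbar)$, and then observe that the twisted DGLA $\Dpoly{}{B_\hbar}(M)\hhbar$ embeds into the Hochschild complex of $\algebra A_\hbar$ via the tautological inclusion of polydifferential operators into multilinear maps.

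First I would verify the shape of $B_\hbar$. Since $\hbar\pi \in \mathcal{F}^1\Tpoly{}{}(M)\hhbar$ sits in shifted degree $0$, and $F_M$ is a filtration-preserving $L_\infty$-morphism of flat $L_\infty$-algebras (so $(F_M)_0 = 0$), each Taylor component $\tfrac{1}{n!}(F_M)_n((\hbar\pi)^n)$ lands in $\Dpoly{}{1}(M)\hhbar$, i.e., is a formal bidifferential operator, and the sum $B_\hbar = \hbar (F_M)_1(\pi) + O(\hbar^2)$ starts at order $\hbar$. By construction (Kontsevich's $\mathscr K$ fiberwise, plus Dolgushev's Fedosov resolution), the first Taylor coefficient $(F_M)_1$ is the HKR map on polyvectors, whose skew-symmetrization on cohomology recovers $\pi$ itself.

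Next I would turn the Maurer--Cartan condition into associativity. With $\mu = 1 \otimes 1 \in \Dpoly{}{1}(M)$ representing the undeformed product and $\partial = [\mu, \argument]_G$, and using $[\mu,\mu]_G = 0$, the Maurer--Cartan equation
\[
  \partial B_\hbar + \tfrac{1}{2}[B_\hbar, B_\hbar]_G = 0
\]
is equivalent to $[\mu_\hbar, \mu_\hbar]_G = 0$ for $\mu_\hbar := \mu + B_\hbar$, which is exactly associativity of the bidifferential operator $\mu_\hbar$. Setting $\algebra A_\hbar := (\Cinfty(M)\hhbar, \mu_\hbar)$ therefore defines an associative deformation of $\Cinfty(M)$; its leading $\hbar$-commutator is proportional to $\pi$ by the HKR property recalled above, so $\algebra A_\hbar$ is a formal deformation quantization of $(\Cinfty(M), \pi)$.

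Finally, for the subcomplex inclusion I would combine three observations: (i) a polydifferential operator is in particular an $\R\hhbar$-multilinear map, giving a tautological inclusion of graded $\R\hhbar$-modules $\Dpoly{}{}(M)\hhbar \hookrightarrow C^\bullet(\algebra A_\hbar, \algebra A_\hbar)$; (ii) the Gerstenhaber bracket $[\argument,\argument]_G$ on $\Dpoly{}{B_\hbar}(M)\hhbar$ is the restriction of the Gerstenhaber bracket on the Hochschild complex, since both are given by the same formulas \eqref{eq:Pre-Lie}--\eqref{eq:Lie}; and (iii) by the twisting formulas of Section~\ref{sec:Twisting}, the twisted differential on $\Dpoly{}{B_\hbar}(M)\hhbar$ equals $\partial + [B_\hbar, \argument]_G = [\mu_\hbar, \argument]_G$, which is precisely the Hochschild differential on $C^\bullet(\algebra A_\hbar, \algebra A_\hbar)$ determined by $\mu_\hbar$. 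The main (and essentially only) non-formal step is the HKR-type identification of the leading $\hbar$-term of $B_\hbar$ with $\pi$; everything else is bookkeeping around the fact that the Gerstenhaber bracket is defined by the same formula on polydifferential operators and on general multilinear maps.
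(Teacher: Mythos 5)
Your proposal is correct and takes essentially the same route as the paper: both read the Maurer--Cartan equation for $B_\hbar$ as associativity of $\mu + B_\hbar$, identify the twisted differential on $\Dpoly{}{B_\hbar}(M)$ with $[\mu + B_\hbar, \argument]_G$, i.e.\ the Hochschild differential of $\algebra{A}_\hbar$, and use the first Taylor coefficient of $F_M$ to see the classical limit is $\pi$. The only cosmetic difference is that you assert $(F_M)_1$ is the HKR map outright, whereas the paper derives the same conclusion from the weaker (and safer, given Dolgushev-style globalization) fact that $(F_M)_1$ is a quasi-isomorphism, so that the alternating part of $B_\hbar$ is $\hbar\pi$ modulo $\hbar^2$.
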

\begin{proof}
  Note that the Maurer--Cartan equation \eqref{eq:MC} matches exactly
  the associativity condition of $m + B_\hbar$, where $m$ denotes the
  pointwise multiplication in $\Cinfty(M)\hhbar$. Since
  \begin{equation}
    B_\hbar
    =
    \sum_{n\geq 1}\frac{1}{n!}(F_M)_n(\hbar\pi,\ldots,\hbar\pi)
    =
    \sum_{n\geq 1}\frac{\hbar^n}{n!}(F_M)_n(\pi,\ldots,\pi),
  \end{equation}
  we see that $m+B_\hbar$ defines a deformation quantization
  $\algebra{A}_\hbar$. The differential on the Hochschild complex
  $C^\bullet(\algebra A, \algebra A)$ is given by taking the
  Gerstenhaber bracket with the multiplication for any associative
  algebra $\algebra A$.  Thus the twisted differential on
  $\Dpoly{}{B_\hbar}(M)$ coincides with the differential of
  $C^\bullet(\algebra{A}_\hbar,\algebra{A}_\hbar)$.  Finally we recall
  that
  \begin{equation}
    B_\hbar
    =
    \sum_{n\geq 1}\frac{\hbar^n}{n!}(F_M)_n(\pi,\pi,\ldots,\pi)
    =
    \hbar(F_M)_1(\pi)  
    \hspace{0.4cm}\mbox{mod}\hspace{0.2cm}\hbar^2.
  \end{equation}
  Since $(F_M)_1$ is a quasi-isomorphism we find that the alternating
  part of $B_\hbar$ is $\hbar\pi$ modulo $\hbar^2$ and
  \begin{equation}
    \frac{[f,g]_{\star}}{\hbar}
    =
    \pi(f,g)+O(\hbar)
  \end{equation}
  for all $f,g\in \Cinfty(M)$. Here $[\argument, \argument]_\star$
  denotes the commutator bracket of $\algebra{A}_\hbar$.
\end{proof}
Thus we obtain in particular the deformation quantization
$\algebra{A}_\hbar$.  On the other hand we also obtain the MC element
$\rho_\hbar$ in $\Dpoly{\mathfrak{g}}{}$. This yields the
coproduct $\Delta_{J_\hbar}$ (where $J_\hbar=1\otimes 1 +\rho_\hbar)$
thus establishing a quantum group, obtained by quantization of the Lie
bialgebra $(\mathfrak{g},\Schouten{r,\argument})$.
Finally we obtain the following theorem as a corollary of
Prop.~\ref{prop:twistJ} and Corollary~\ref{twistaction}. 

\begin{theorem}
  Suppose $(\lie g, r)$ is a triangular Lie algebra and $\varphi\colon
  \lie g\rightarrow \Secinfty(TM)$ is an action on the manifold
  $M$. Then there exist a formal deformation quantization
  $\mathscr{A}_\hbar$ of $(M,\pi)$ (where
  $\varphi\wedge\varphi(r)=\pi$) and a quantization
  $\mathscr{U}_\hbar(\lie g)$ of $(\lie g, r)$ which allow a
  deformation symmetry
  \begin{equation} 
    \tilde{\varphi}^{\rho_\hbar}\circ\mathscr{J}
    \colon 
    \mathscr{U}_\hbar(\lie g)_{poly}\longrightarrow C(\mathscr{A}_\hbar;\mathscr{A}_\hbar)
  \end{equation}
\end{theorem}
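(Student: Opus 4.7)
The plan is to assemble the theorem by chaining three ingredients already in place: the twisting isomorphism of Proposition~\ref{prop:twistJ}, the commuting square \eqref{twistsquare} of Corollary~\ref{twistaction}, and the embedding of the twisted polydifferential complex into the Hochschild complex of the deformed algebra from Proposition~\ref{defHoch}. Composition of the three is an $L_\infty$-morphism of the required source and target, so no new analysis is needed beyond identifying objects correctly.

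First I would fix notation. Set $\algebra{A}_\hbar$ to be the deformation quantization of $(\Cinfty(M),\pi)$ produced by Proposition~\ref{defHoch}, so that $\Dpoly{}{B_\hbar}(M)\hhbar$ is a sub-DGLA of $C^\bullet(\algebra{A}_\hbar;\algebra{A}_\hbar)$. Set $J_\hbar := 1\otimes 1+\rho_\hbar$; by Lemma characterising formal twists, $J_\hbar$ is a formal twist on $\UE(\lie g)\hhbar$, so we may define $\mathscr{U}_\hbar(\lie g) := \UE(\lie g)\hhbar_{J_\hbar}$, the coproduct being deformed to $\Delta_{J_\hbar}$. This is the quantum group that quantizes the Lie bialgebra $(\lie g,\Schouten{r,\argument})$.

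Next I would invoke the identification $\Dpoly{\lie g}{}\cong \UE(\lie g)_{poly}$, already pointed out in the Introduction as a special case of the construction $H\mapsto H_{poly}$ applied to $H=\UE(\lie g)$. Extending scalars to $\R\hhbar$ and applying Proposition~\ref{prop:twistJ} with $H=\UE(\lie g)\hhbar$ and $J=J_\hbar$ produces a DGLA isomorphism
\begin{equation}
  \mathscr{J}\colon \mathscr{U}_\hbar(\lie g)_{poly}\longrightarrow \bigl(\UE(\lie g)\hhbar_{poly}\bigr)^{\rho_\hbar}\cong \Dpoly{\lie g}{\rho_\hbar}\hhbar.
\end{equation}
Any DGLA isomorphism is in particular an $L_\infty$-morphism.

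Finally I would compose. Corollary~\ref{twistaction} provides the $L_\infty$-morphism $\tilde{\varphi}^{\rho_\hbar}\colon \Dpoly{\lie g}{\rho_\hbar}\hhbar\to \Dpoly{}{B_\hbar}(M)\hhbar$, and Proposition~\ref{defHoch} provides the inclusion $\iota\colon \Dpoly{}{B_\hbar}(M)\hhbar\hookrightarrow C^\bullet(\algebra{A}_\hbar;\algebra{A}_\hbar)$ of DGLA's. The desired deformation symmetry is then the composite
\begin{equation}
  \iota\circ\tilde{\varphi}^{\rho_\hbar}\circ\mathscr{J}\colon \mathscr{U}_\hbar(\lie g)_{poly}\longrightarrow C(\algebra{A}_\hbar;\algebra{A}_\hbar),
\end{equation}
which is a morphism of $L_\infty$-algebras as a composition of such, and hence a deformation symmetry in the sense of Definition~\ref{Defsym}. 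The main thing to verify carefully, and where I would expect to spend the most care, is the clean identification at the level of filtered Hopf-algebra-over-$\R\hhbar$ structures between $\Dpoly{\lie g}{}\hhbar$ and $\UE(\lie g)\hhbar_{poly}$, so that Proposition~\ref{prop:twistJ} applies directly with $J=1\otimes 1+\rho_\hbar$ and yields exactly the DGLA $\Dpoly{\lie g}{\rho_\hbar}\hhbar$ appearing in Corollary~\ref{twistaction}; everything else then reduces to invoking statements already proved.
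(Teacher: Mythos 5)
Your proposal is correct and coincides with the paper's own argument: the paper presents the theorem precisely as a corollary of Proposition~\ref{prop:twistJ} and Corollary~\ref{twistaction}, with Proposition~\ref{defHoch} supplying $\mathscr{A}_\hbar$ and the embedding of $\Dpoly{}{B_\hbar}(M)\hhbar$ into the Hochschild complex, so the deformation symmetry is exactly your composite $\iota\circ\tilde{\varphi}^{\rho_\hbar}\circ\mathscr{J}$ with $\mathscr{U}_\hbar(\lie g)$ the Hopf algebra twisted by $J_\hbar=1\otimes 1+\rho_\hbar$. The identification $\Dpoly{\lie g}{}\cong \UE(\lie g)_{poly}$ that you flag as the point deserving care is the same observation the paper makes when introducing the $H_{poly}$ construction, so nothing beyond invoking the cited results is required.
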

%

%\begin{tikzcd}%[row sep=3em,column sep=4em]
%	& (\wedge^\bullet \lie g, 0) 
 %   \arrow[r, "\varphi"]\arrow[d, swap, " "] 
 %   & (\Tpoly{}  (M), 0) \arrow[d,  " "]
 %   \\
 %   & (T \algebra{U}(\lie g), \Delta ) \arrow[r, "\hat{\varphi}"]
 %   & (\Dpoly{}(M), \partial)
%\end{tikzcd}

\subsection{Comparison with Drinfeld's construction}

%What we proved above motivates the following definition.
%\begin{definition}[Quantum action]
 % Let $\algebra A_\hbar$ be a formal deformation quantization of
 % $(\Cinfty(M), \pi)$ and $\algebra U(\lie g)$ a quantum group
 % corresponding to the Lie bialgebra $\lie g$. A quantum action is a
 % morphism $\varphi_\hbar \colon \algebra U(\lie g) \to
  %C^1(\algebra{A}_\hbar,\algebra{A}_\hbar)$ such that the
  %corresponding map $T \algebra U(\lie g) \to
  %C^\bullet(\algebra{A}_\hbar,\algebra{A}_\hbar)$ is a DGLA morphism.
%\end{definition}

First, let us briefly recall the original construction of Drinfeld
(see
\cite{aschieri.schenkel:2014a,drinfeld:1989a,giaquinto.zhang:1998a}). Consider
a formal twist $J$ on $\mathscr{U}(\mathfrak{g})\hhbar$ and a generic
$\mathscr{U}(\lie{g})$-module algebra $\algebra A$. Drinfeld proved
that we can then always define an associative star product on
$\algebra A$. In particular, consider $\algebra A = \Cinfty(M)$ with
pointwise multiplication $m$.  Given a Lie algebra action $\varphi:
\lie g \to \Secinfty(TM)$ we obtain a Hopf algebra action
\begin{equation}
  \label{eq:Action}
  \acts
  \colon
  \mathscr{U}(\lie{g}) \times \Cinfty(M)
  \longrightarrow
  \Cinfty(M),
\end{equation}
which makes $\Cinfty(M)$ into a left $\mathscr{U}(\lie{g})$-module
algebra. More precisely, $X \acts f = \Lie_{\varphi(X)} f$ where
$\Lie$ denotes the Lie derivative. The action $\acts$ extends to
formal power series 
\begin{equation}
  \acts \colon
  \mathscr{U}(\lie{g})\hhbar \times \Cinfty(M)\hhbar\longrightarrow
  \Cinfty(M)\hhbar.
\end{equation}
Thus the product defined by
\begin{align}
  \label{eq:StarTwist}
  f\star_J g
  =
  m(J\acts(f\tensor g))
  =
  m (\Lie_{\varphi \tensor \varphi(J)}(f \tensor g))
\end{align}
for $f, g \in \Cinfty(M)\hhbar$ is a star product.  The classical
limit of \eqref{eq:StarTwist} is given by
\begin{equation}
  \label{eq:ThePoissonStructure}
  \{f, g\}
  =
  m (r \acts (f \tensor g)),
\end{equation}
where $r:=\frac{J-\tau(J)}{\hbar}|_{\hbar=0}$ is the $r$-matrix
associated to the twist $J$, here $\tau$ denotes the flip $X\otimes
Y\mapsto Y\otimes X$.  It is important to underline that the deformed
algebra $(\Cinfty (M)\hhbar, \star_J)$ is then a module-algebra for
the quantum group:
\begin{equation}
  \UE_J(\lie{g}) 
  := (\UE(\lie{g})\hhbar, \Delta_J).
\end{equation}
In other words, $\acts$ is a Hopf agebra action of the twisted Hopf
algebra $\UE_J(\lie{g})$ on $(\Cinfty (M)\hhbar, \star_J)$.

%Let us note that the notation $H_{poly}$ for a Hopf algebra $H$
%  actually derives from the notation $\Dpoly{\mathfrak{g}}{}$ and in
%  fact, if we consider the Hopf algebra $\mathscr{U}(\mathfrak{g})=H$,
%  then $H_{poly}=\Dpoly{\mathfrak{g}}{}$. 

Thus, the construction takes a formal twist $J\in
\mathscr{U}(\mathfrak{g})\hhbar$ and an infinitesimal action of $\lie
g$ on $M$ as input and produces a deformation quantization $\algebra
A_\hbar$ together with an action of the quantum group
$\mathscr{U}_J(\lie g)$ on it. 
%This leaves in the middle how one obtained a formal twist to begin with. 
In our approach one starts with
an $r$-matrix $r\in\lie g\wedge\lie g$ and an infinitesimal action of
$\lie g$ on $M$ and obtains a formal twist $J$ and a deformation
symmetry $\tilde{\varphi}$. These then also yield a deformation
quantization given by
\begin{equation}
  f\star_r g=\tilde{\varphi}(J)(f\otimes g),
\end{equation}
and another deformation symmetry $\tilde{\phi}^{\rho_\hbar}\circ\mathscr{J}$ 
of the quantum group $\UE_J(\lie g)$ in the deformed algebra $\algebra A_\hbar$.
 %
  %We will
  %show in the following that the formality quasi-isomorphism
  %constructed in the previous sections lead to deformation symmetries
  %that are not coming from Hopf algebra actions.
The main difference of the two approaches is that the formal twist is taken as
given in Drinfeld's approach while we obtain it through quantization of an $r$-matrix. The trade-off
is however that we do not obviously obtain an action of a quantum
group anymore, instead we obtain the deformation symmetry. A direct comparison of the two approaches will be
nontrivial and it implies a study of the compatibility condition
between connections mentioned in Remark \ref{rem:WrongCandidate}. In
particular it is of interest whether the process of quantization so
obtained can be made functorial for equivariant maps between the
manifolds. We will come back to this in a future project.

{
  \footnotesize
  \renewcommand{\arraystretch}{0.5}
%  \bibliographystyle{chairx}
%  \bibliography{dqarticle,dqbook,dqprocentry,dqproceeding,dqthesis,misc,script,preprints,notes,twist}
%}

}

\end{document}